 \newtheorem{thm}{Theorem}[section]
 \newtheorem{cor}[thm]{Corollary}
 \newtheorem{lem}[thm]{Lemma}
 \theoremstyle{definition}
  \newtheorem*{ack}{Acknowledgments}
 \theoremstyle{remark}
 \newtheorem{rem}[thm]{Remark}
 \numberwithin{equation}{section}
\begin{document}
\title[The planar isotropic $L_p$ dual Minkowski problem]{Classification of solutions for the planar isotropic\\
$L_p$ dual Minkowski problem}

\author{Haizhong Li}
\address{Department of Mathematical Sciences, Tsinghua University, Beijing 100084, P.R. China}
\email{\href{mailto:lihz@tsinghua.edu.cn}{lihz@tsinghua.edu.cn}}

\author{Yao Wan}
\address{Department of Mathematical Sciences, Tsinghua University, Beijing 100084, P.R. China}
\email{\href{mailto:y-wan19@mails.tsinghua.edu.cn}{y-wan19@mails.tsinghua.edu.cn}}

\keywords{$L_p$ dual Minkowski problem; Fully nonlinear ODE; Classification}
\subjclass[2010]{52A10; 35K55; 53A04}


\begin{abstract}
In his beautiful paper \cite{Ben03}, Ben Andrews obtained the complete classification of the solutions of the planar isotropic $L_p$ Minkowski problem. In this paper, by generalizing Ben Andrews's result we obtain the complete classification of the solutions of the planar isotropic $L_p$ dual Minkowski problem, that is, for any $p,q\in\mathbb{R}$ we obtain the complete classification of the solutions of the following equation:
\begin{equation*}
    u^{1-p}(u_{\theta}^2+u^2)^{\frac{q-2}{2}}(u_{\theta\theta}+u)=1\quad\text{on}\ \mathbb{S}^1.
\end{equation*}
To establish the classification, we convert the ODE for the solution into an integral and study its asymptotic behavior, duality and monotonicity.
\end{abstract}

\maketitle

\section{Introduction}

The Brunn-Minkowski theory is the classical core of the convex geometry, which focuses on studying geometric measures and geometric inequalities of convex bodies, see, for example Schneider \cite{Sch14}. The Minkowski problem is of central importance in the Brunn-Minkowski theory, which asks to characterize the surface area measure. Besides, the Aleksandrov problem characterizes the integral measure, and the dual Aleksandrov problem characterizes the dual integral measure. These problems are influential problems not only in geometric analysis, but also in the theory of fully nonlinear partial differential equations.

The $L_p$ Brunn-Minkowski theory and the dual Brunn-Minkowski theory are two theories that fundamentally extended the classical Brunn-Minkowski theory, which were initiated by Lutwak \cite{Lut7501, Lut7502, Lut93, Lut96}. Lutwak \cite{Lut93, Lut96} introduced the $L_p$ surface measure
and proposed the $L_p$ Minkowski problem, which asks to characterize the $L_p$ surface area measure. The $L_p$ Minkowski problem contains critical problems such as the classical Minkowski problem ($p=1$), the logarithmic Minkowski problem ($p=0$) and the centro-affine Minkowski problem ($p=-n$) as special cases, and has been extensively studied, see, for example \cite{BLY13, BT17, CY76, CW06, HLY05, JLZ16, LW13, LYZ04}. Huang, Lutwak, Yang and Zhang \cite{HLY16} introduced the dual curvature measure and proposed the dual Minkowski problem, which asks to characterize the dual curvature measure. The dual Minkowski problem contains critical problems such as the logarithmic Minkowski problem ($q=n$) and the Aleksandrov problem ($q=0$) as special cases, and has been studied in \cite{BHP18, BLY19, CL18, HP18, Zhao17, Zhao18}.

The $L_p$ dual Minkowski problem was posed by Lutwak, Yang and Zhang in \cite{LYZ18}, and unifies all Minkowski type problems mentioned so far. The $L_p$ dual Minkowski problem contains critical problems such as the $L_p$ Minkowski problem ($q=n$), the dual Minkowski problem ($p=0$) and the $L_p$ Aleksandrov problem ($q=0$) as special cases, and has been intensively studied by many authors, see, for example \cite{BF19, CCL21, CHZ19, CL21, HZ18, LLL22, SX21}.

When the given measure $\mu$ has a density $f$, the $L_p$ dual Minkowski problems becomes the following Monge-Ampère type equation on $\mathbb{S}^{n-1}$:
\begin{equation}\label{1.1}
    u^{1-p}(u^2+|\nabla u|^2)^{\frac{q-n}{2}}\det(\nabla^2 u+uI)=f,
\end{equation}
where $f$ is a given positive smooth function on $\mathbb{S}^{n-1}$, $u$ is the support function of a convex body, and $\nabla u$ and $\nabla^2 u$ are the gradient and the Hessian of $u$ on the unit sphere with respect to an orthonormal basis, respectively. 

We call the $L_p$ dual Minkowski problem with $f=1$ in $\mathbb{R}^2$ the planar isotropic $L_p$ dual Minkowski problem, which is concerned with the following equation
\begin{equation}
    u^{1-p}(u_{\theta}^2+u^2)^{\frac{q-2}{2}}(u_{\theta\theta}+u)=1, \label{1.2}
\end{equation}
where $u:\mathbb{S}^1\to(0,+\infty)$ is the support function of a closed planar curve. The classification of solutions for Eq.(\ref{1.2}) is of great interest to the study of the $L_p$ dual Minkowski problem. We call a solution $u$ of (\ref{1.2}) to be an embedded  (immersed, resp.) solution if it corresponds to the support function of an embedded (immersed, resp.) curve.

The aim of this paper is to classify the solutions of Eq.(\ref{1.2}) for any $p,q\in\mathbb{R}$. When $q=2$, the planar $L_p$ dual Minkowski problem becomes the planar $L_p$ Minkowski problem, and Eq.(\ref{1.2}) can be deduced to the following equation
\begin{equation}
	u^{1-p}(u_{\theta\theta}+u)=1.\label{1.3}
\end{equation}
If $p\neq1$, let $\alpha=\frac{1}{1-p}$, then Eq.(\ref{1.3}) is equivalent to
\begin{equation}
    \kappa^{\alpha}=\langle x,\nu\rangle,\label{1.4}
\end{equation}
where $x$ is a closed planar curve with outer unit normal $\nu$ and curvature $\kappa$. This equation is closely related to the asymptotic behavior of the generalized curve shortening problem. The classification of the embedded solutions of Eq.(\ref{1.4}) has been completely obtained by Ben Andrews in his beautiful paper \cite{Ben03}.

\begin{thm}[\cite{Ben03}]\label{Thm1.1}
If $\alpha\in[\frac{1}{8},\infty)$, then the only embedded solution of Eq.(\ref{1.4}) is a circle with radius $1$ centred at the origin except for $\alpha=\frac{1}{3}$ where they could be ellipses. If $\alpha\in(0,\frac{1}{8})$, then the embedded solution is a circle  with radius $1$ centred at the origin, as well as unique (up to rotation and scaling) curves $\Gamma_{k,\alpha}$ with $k$-fold symmetry, for each integer $k$ satisfying $3\le k<\sqrt{\frac{\alpha+1}{\alpha}}$. The curves $\Gamma_{k,\alpha}$ depend smoothly on $\alpha<\frac{1}{k^2-1}$ and converge to regular $k$-sided polygons as $\alpha$ tends to $0$, and to  circles as $\alpha$ tends to $\frac{1}{k^2-1}$.
\end{thm}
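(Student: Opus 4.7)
The plan is to convert \eqref{1.4} into a planar Hamiltonian ODE for the support function and then classify its closed orbits via a period--energy analysis. With $\alpha=1/(1-p)$ the equation \eqref{1.3} becomes the autonomous ODE
\begin{equation*}
u_{\theta\theta}+u=u^{-1/\alpha}\quad\text{on }\mathbb{S}^{1},
\end{equation*}
which, after multiplication by $u_\theta$ and integration, admits the conserved quantity
\begin{equation*}
E=\tfrac12(u_\theta^2+u^2)-\tfrac{\alpha}{\alpha-1}\,u^{1-1/\alpha}
\end{equation*}
(with a logarithmic modification if $\alpha=1$). The unique equilibrium in $\{u>0\}$ is $u\equiv 1$, which represents the unit circle; every other bounded orbit oscillates between an inner radius $u_{\min}(E)$ and an outer radius $u_{\max}(E)$. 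An embedded $k$-fold symmetric solution of \eqref{1.4} corresponds exactly to such an orbit whose minimal $\theta$-period equals $2\pi/k$, so the classification reduces to counting energies $E$ at which
\begin{equation*}
P(E):=2\int_{u_{\min}(E)}^{u_{\max}(E)}\frac{du}{\sqrt{2E-u^{2}+\frac{2\alpha}{\alpha-1}u^{1-1/\alpha}}}
\end{equation*}
equals $2\pi/k$ for some integer $k\ge 2$.

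Linearising at $u=1$ gives the harmonic oscillator $v_{\theta\theta}+(1+1/\alpha)v=0$ of frequency $\omega=\sqrt{(\alpha+1)/\alpha}$, so $P(E)\to 2\pi/\omega$ as $E$ tends to the equilibrium value. A short asymptotic analysis at the opposite end of the admissible energy range shows $P(E)\to\pi$, and the identity $2\pi/\omega=\pi$ holds precisely at the centro-affine value $\alpha=1/3$. Assuming $P$ is strictly monotonic on the admissible range for $\alpha\ne 1/3$, its image consists exactly of the values strictly between $2\pi/\omega$ and $\pi$, and $P(E)=2\pi/k$ is uniquely solvable iff $2\pi/k$ lies in that open interval. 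For $\alpha<1/3$ this amounts to $2<k<\omega$, equivalently $k\ge 3$ and $\alpha<1/(k^{2}-1)$, which reproduces the admissibility range in the statement and the threshold $\alpha<1/8$ for the first nontrivial case $k=3$; for $\alpha>1/3$ no integer $k\ge 2$ qualifies and the unit circle is the only embedded solution.

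The main obstacle is proving strict monotonicity of $P$ for $\alpha\ne 1/3$; once this is in hand, uniqueness of $\Gamma_{k,\alpha}$ (up to rotation and scaling) is automatic. I would establish it via a Chicone/Schaaf-type criterion that reduces the sign of $P'(E)$ along each orbit to the sign of an elementary polynomial expression in the effective potential $V(u)=u^{2}/2-\tfrac{\alpha}{\alpha-1}u^{1-1/\alpha}$ and its first three derivatives; for this pure-power potential the expression can be computed explicitly and shown to have a definite sign for every $\alpha\in(0,\infty)\setminus\{1/3\}$ and to vanish identically at $\alpha=1/3$. At the isochronous value $\alpha=1/3$, $P(E)\equiv\pi$ and direct substitution confirms that every origin-centred ellipse satisfies \eqref{1.4}, which accounts for the exception in the statement. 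Embeddedness of the resulting curves is automatic because $u_{\min}(E)>0$ keeps $u$ strictly positive along the orbit.

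Finally, the qualitative picture of $\Gamma_{k,\alpha}$ follows from the behaviour of $P$ at the endpoints of the admissible energy interval. Smooth dependence on $\alpha$ is the implicit function theorem applied to $P(E,\alpha)=2\pi/k$; the convergence of $\Gamma_{k,\alpha}$ to the unit circle as $\alpha\uparrow 1/(k^{2}-1)$ is the Hopf-type collapse at the equilibrium supplied by the linearisation; and the polygonal limit as $\alpha\downarrow 0$ follows by showing $u_{\min}\to 0$ along the $k$-fold orbit and rescaling arc-length near each of the $k$ turning points to identify the limit as a regular $k$-gon.
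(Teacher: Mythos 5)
Your outline follows essentially the same strategy that underlies Theorem \ref{Thm1.1} (and its generalization in this paper): reduce \eqref{1.4} to the autonomous ODE $u_{\theta\theta}+u=u^{-1/\alpha}$, pass to the first integral, define the period of the orbit as a function of the energy, compute its limits at the equilibrium ($2\pi/\omega$ with $\omega=\sqrt{(\alpha+1)/\alpha}$, matching \eqref{3.3} with $q-p=1+1/\alpha$) and at the degenerate end, and then count solutions of $P(E)=2\pi/k$ using monotonicity. The difficulty is that the entire theorem hinges on the strict monotonicity of $P$, and this is exactly the step you leave unproved. Your proposed route — a Chicone/Schaaf-type criterion reducing $P'(E)$ to a sign condition on the effective potential $V$ and its derivatives — is \emph{not} the argument used here: the paper imports the monotonicity as Lemma \ref{Lem6.2} from \cite{Ben03}, whose proof rests on the specific change of variables $x^{\beta_0}=v$ with $\beta_0=\frac{2q-p}{3}$ (chosen so that the first-order term in \eqref{3.10} vanishes) followed by a pointwise sign analysis of $\partial\tilde J/\partial t$ for each $z$, which is strictly stronger than monotonicity of the integral. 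You assert that the Chicone-type expression "can be computed explicitly and shown to have a definite sign for every $\alpha\in(0,\infty)\setminus\{1/3\}$," but you do not exhibit the computation, and for the potential $V(u)=u^2/2-\frac{\alpha}{\alpha-1}u^{1-1/\alpha}$ this is far from routine; until it is carried out, the core of the proof is missing.

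Two smaller points. First, your claim that $P(E)\to\pi$ at the far end of the energy range is correct only for $0<\alpha\le 1$ (i.e.\ $p\le 0$); for $\alpha>1$ one has $0<p<1$ and the limit is $\frac{2\pi}{2-p}=\frac{2\pi\alpha}{\alpha+1}>\pi$ (cf.\ \eqref{3.5}). This does not change the conclusion in that range, since both endpoint values then exceed $\pi\ge 2\pi/k$ for $k\ge2$, but the statement as written is false. Second, you restrict to $k\ge 2$ without justification; this needs either the four-vertex theorem or the observation that $P<2\pi$ throughout the admissible range, to exclude $k=1$. The remaining qualitative assertions (smooth dependence on $\alpha$, collapse to circles as $\alpha\uparrow\frac{1}{k^2-1}$, polygonal limit as $\alpha\downarrow0$) are plausibly sketched but would also require the monotonicity and a genuine blow-up analysis near the turning points to be complete.
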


For the general case of Eq.(\ref{1.2}), it is known that the solution is unique when $q<p$ \cite{HZ18} and is unique up to scaling when $p=q$ \cite{CL21}, which can be obtained by the strong maximum principle. When $-2\le p<q\le \min\{2,p+2\}$, Chen-Huang-Zhao \cite{CHZ19} showed that the even solution of Eq.(\ref{1.2}) must be constant. When $1<p<q\le 2$, Chen-Li \cite{CL21} showed that the solution of Eq.(\ref{1.2}) must be constant. 

On the other hand, the nonuniqueness of solutions of Eq.(\ref{1.2}) has been provided by Huang-Jiang \cite{HJ19} when $p=0$ and an even $q\ge 6$, and by Chen-Chen-Li \cite{CCL21} either when $pq\ge0,\ q-p>4$, or when $p<0<q,\ 1+\frac{1}{p}<\frac{1}{q}$ and $q-p>4$. It was proved by Jiang-Wang-Wu \cite{JWW21} that when $p<0$ and an even $q\ge2$ are such that $q-p>16$, then there exists a positive classical solution $u$ of Eq.(\ref{1.2}) with the least period $\frac{2\pi}{m}$ for any integer $m\in [4,\sqrt{q-p})$.

Recently, Liu-Lu \cite{LL22} already studied the dual Minkowski problem ($p=0$) and proved that the solution of Eq.(\ref{1.2}) must be constant when $p=0$ and $1\le q \le 4$, there exists exactly two solutions (up to rotation) when $p=0$ and $0<q<1$, and there exists at least $\left\lceil \sqrt{q}\right\rceil-1$ solutions when $p=0$ and $q>4$. By duality, they also proved that the solution of Eq.(\ref{1.2}) must be constant when $q=0$ and $-4\le p \le -1$, there exists exactly two solutions (up to rotation) when $q=0$ and $-1<p<0$, and there exists at least $\left\lceil \sqrt{-p}\right\rceil-1$ solutions when $q=0$ and $p<-4$.

In this paper, we obtain the complete classification of the embedded solutions of Eq.(\ref{1.2})  for any $p,q\in\mathbb{R}$.

\begin{thm} \label{Thm1.2}
The classification of the embedded solutions of Eq.(\ref{1.2}) is as follows:

\begin{enumerate}
    \item[Case (1)] \quad $q-p\le 0$.
    \begin{enumerate}
        \item[Subcase $1^\circ$] If $q<p$, then the embedded solution is unique.
        \item[Subcase $2^\circ$] If $q=p$, then the embedded solution is unique up to scaling.
        
        $\ $
    \end{enumerate}
    
    \item[Case (2)] \quad $0<q-p\le 1$.
    \begin{enumerate}
        \item[Subcase $1^\circ$] If $(p,q)=(1,2)$, then each embedded solution has the form 
        \begin{align*}
        u(\theta)=1+\lambda\cos(\theta-\theta_0),\quad\lambda\ge0,\ \theta_0\in[0,2\pi),
        \end{align*}
        which is unique up to translation transformations. Note that $u(\theta)$ is the support function of a circle with radius 1.
        
        \item[Subcase $2^\circ$] If $(p,q)=(-2,-1)$, then each embedded solution has the form 
        \begin{align*}
        u(\theta)=\frac{\sqrt{1-\mu^2\sin^2(\theta-\theta_0)}-\mu\cos(\theta-\theta_0)}{1-\mu^2},\quad 0\le\mu<1,\ \theta_0\in[0,2\pi).
        \end{align*}
        Note that $\frac{1}{u(\theta)}$ is the radial function of a circle which has radius $1$ and contains the origin in its interior.
        
        \item[Subcase $3^\circ$] If $q-p=1$ and $(p,q)\neq (1,2),\ (-2,-1)$, then the embedded solution is unique. 
        
        \item[Subcase $4^\circ$] If $q-p<1$ and $q\le 2p$, then the embedded solution is unique. 
        
        \item[Subcase $5^\circ$] If $q-p<1,\ q>2p$ and $2q\le p$, then the embedded solution is unique. 
        
        \item[Subcase $6^\circ$] If $q-p<1,\ q>2p$ and $2q>p$, then there exist exactly two embedded solutions.
        
        $\ $
    \end{enumerate}
    
    \item[Case (3)] \quad $1<q-p\le 4$.
    \begin{enumerate}
        \item[Subcase $1^\circ$] If $(p,q)=(-2,2)$, then each embedded solution has the form 
        $$u(\theta)=\sqrt{\lambda^2\cos^2(\theta-\theta_0)+\lambda^{-2}\sin^2(\theta-\theta_0)},\quad \lambda>0,\ \theta_0\in[0,2\pi),$$
        which is unique up to rotation and unimodular affine transformations. Note that $u(\theta)$ is the support function of an ellipse with area $1$ and centered at the origin.
        
        \item[Subcase $2^\circ$] If $q\ge 2p$, $2q\ge p$ and $1<q-p\le 3$, then the embedded solution is unique.
        
        \item[Subcase $3^\circ$] If $q\ge 2p$, $2q\ge p,\ 3<q-p\le 4$ and $(p,q)\neq(-2,2)$.
        
        \item[Subcase $3.1^\circ$] If in addition $p\ge -2$ and $q\le 2$ , then the embedded solution is unique.
        
        \item[Subcase $3.2^\circ$] If in addition $q>2$ and $p\ge\frac{2q}{2+q}$, then the embedded solution is unique.
        
        \item[Subcase $3.3^\circ$] If in addition $q>2$ and $p<\frac{2q}{2+q}$, then the non-constant embedded solution must be $\pi$-periodic if it exists.
        
        \item[Subcase $3.4^\circ$] If in addition $p<-2$ and $q\le \frac{2p}{2-p}$, then the embedded solution is unique.
        
        \item[Subcase $3.5^\circ$] If in addition $p<-2$ and $q> \frac{2p}{2-p}$, then the non-constant embedded solution must be $\pi$-periodic if it exists.
        
        \item[Subcase $4^\circ$] If $q\ge 2p$ and $2q< p$, then there exist exactly two embedded solutions.
        
        \item[Subcase $5^\circ$] If $q<2p$, then there exist exactly two embedded solutions.
        
        $\ $
    \end{enumerate}

    \item[Case ($4$)] \quad $(k-1)^2<q-p\le k^2,\quad k\ge3$.
    \begin{enumerate}
        \item[Subcase $1^\circ$] If $q\ge 2$ and $p\le -2$, then there exist exactly $(k-2)$ embedded solutions.
        
        \item[Subcase $2^\circ$] If $q\ge 2,\ p<\frac{2q}{2+q}$ and $-2<p\le -1$, then there exist at least $(k-2)$ embedded solutions.
        
        \item[Subcase $2.1^\circ$] If in addition $p>\frac{q}{1-q}$ and $k=3$, then there exist at least two embedded solutions.
        
        \item[Subcase $3^\circ$] If $q\ge 2,\ p<\frac{2q}{2+q}$ and $p>-1$, then there exist at least $(k-1)$ embedded solutions.
        
        \item[Subcase $4^\circ$] If $q\ge 2,\ p\ge \frac{2q}{2+q},\ p>-2$ and $q\ge 2p$, then there exist exactly $(k-1)$ embedded solutions.
        
        \item[Subcase $5^\circ$] If $q\ge 2,\ p\ge \frac{2q}{2+q},\ p>-2$ and $q<2p$, then there exist exactly $k$ embedded solutions.
        
        \item[Subcase $6^\circ$] If $p\le -2,\ q>\frac{2p}{2-p}$ and $1\le q<2$, then there exist at least $(k-2)$ embedded solutions.
        
        \item[Subcase $6.1^\circ$] If in addition $q<\frac{p}{1+p}$ and $k=3$, then there exist at least two embedded solutions.
        
        \item[Subcase $7^\circ$] If $p\le -2,\ q>\frac{2p}{2-p}$ and $q<1$, then there exist at least $(k-1)$ embedded solutions.
        
        \item[Subcase $8^\circ$] If $p\le -2,\ q\le \frac{2p}{2-p},\ q< 2$ and $2q\ge p$, then there exist exactly $(k-1)$ embedded solutions.
        
        \item[Subcase $9^\circ$] If $p\le -2,\ q\le \frac{2p}{2-p},\ q<2$ and $2q<p$, then there exist exactly $k$ embedded solutions.
        
    \end{enumerate}
\end{enumerate}

Furthermore, if $p<q$ and $(p,q)\neq(1,2),\ (-2,-1)$ or $(-2,2)$, then any non-constant embedded solution corresponds to a curve $\Gamma_{k,p,q}$ with $k$-fold symmetry for some integer $k$ strictly between $\sqrt{q-p}$ and $\Xi(p,q)$, where
\begin{align*}
    \Xi(p,q)=\left\{\begin{array}{l}
    \frac{2(q-p)}{q},\quad  \text{if}\ 0\le p<q\\
    2,\quad\quad\quad \text{if}\ p<0<q\\ 
    \frac{2(p-q)}{p},\quad \text{if}\ p<q\le 0.
    \end{array}\right.
\end{align*}
\end{thm}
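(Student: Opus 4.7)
The plan is to reduce the second-order ODE to a one-parameter family of periodic orbits in the $(u, u_\theta)$-plane by finding a first integral, and then to analyse the period of these orbits as a function of the parameter. Multiplying Eq.\eqref{1.2} by $u_\theta$ and integrating, one verifies
\begin{equation*}
    \frac{(u^2+u_\theta^2)^{q/2}}{q} - \frac{u^p}{p} \equiv C
\end{equation*}
along every solution, with the natural logarithmic modifications when $p=0$ or $q=0$. The constant $u\equiv 1$ is always a solution. A non-constant periodic solution oscillates between extrema $u_- < u_+$ where $u_\theta = 0$, so the function $\Phi(s) := s^q/q - s^p/p$ must satisfy $\Phi(u_-) = \Phi(u_+)$. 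Since $\Phi'(s) = s^{q-1} - s^{p-1}$ vanishes only at $s=1$ with $\Phi''(1) = q-p$, such a pair of distinct roots exists only when $q > p$, which immediately settles Case~(1).

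Once $q > p$, solving the first integral for $u_\theta$ and separating variables gives the half-period as
\begin{equation*}
    J(C) := \int_{u_-}^{u_+}\frac{du}{\sqrt{\bigl(qC + (q/p)\,u^p\bigr)^{2/q} - u^2}}.
\end{equation*}
An embedded non-constant $2\pi$-periodic solution then corresponds to a value of $C$ with $J(C) = \pi/k$ for some integer $k \ge 2$, subject to the convexity constraint $u_{\theta\theta} + u > 0$. Linearising Eq.\eqref{1.2} at $u = 1$ yields $v_{\theta\theta} + (q-p)\,v = 0$, so bifurcation of $k$-fold symmetric branches from the constant solution happens precisely at $q - p = k^2$; this explains the partition $(k-1)^2 < q-p \le k^2$ of the statement. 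The admissible range of $k$ is bounded above by $\sqrt{q-p}$ (the linear frequency) and below by $\Xi(p,q)$ (which comes from the convexity/embeddedness constraint and whose three cases reflect the sign pattern of $(p,q)$).

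The core of the proof is then a quantitative study of $J(C)$ along each non-constant branch, via three ingredients. First, the asymptotics of $J$ at the two ends of the admissible $C$-interval: at the constant-solution endpoint $J \to \pi/\sqrt{q-p}$, while at the degenerate endpoint ($u_- \to 0$ or $u_+ \to \infty$) $J$ tends to $\pi/\Xi(p,q)$. Second, strict monotonicity of $J$ in $C$ along each branch, so that every equation $J(C) = \pi/k$ has at most one solution and the counting of embedded non-constant solutions reduces to counting integers $k$ in the open interval between $\Xi(p,q)$ and $\sqrt{q-p}$. Third, a duality of Eq.\eqref{1.2} arising from the polar correspondence $u \leftrightarrow 1/u$, which conjugates the equation for parameters $(p,q)$ to itself for $(-q, -p)$ and swaps the two extrema $u_\pm$, thereby roughly halving the case analysis. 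The three exceptional pairs $(1,2)$, $(-2,-1)$, $(-2,2)$ are precisely those $(p,q)$ at which $\Phi$ acquires a continuous symmetry making $J$ constant in $C$, producing continuous families of solutions (translated circles, inverses of translated circles, and centred ellipses respectively); these are isolated and treated by direct computation, and the $q=2$ slice of the picture is matched with Theorem~\ref{Thm1.1}.

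The decisive obstacle is the second ingredient above: strict monotonicity of $J(C)$. After an appropriate substitution, $dJ/dC$ reduces to an integral whose sign is controlled by a weighted comparison between the densities $u^{p-1}$ and $u^{q-1}$ over $[u_-, u_+]$, and the numerous critical thresholds that distinguish the subcases of Cases~(2)--(4) -- namely $q = 2p$, $p = 2q$, $p = 2q/(2+q)$, $q = 2p/(2-p)$ and their variants -- are exactly the loci at which this weighted comparison changes sign. Consequently the intricate case distinction in the statement is driven by a single monotonicity computation whose sign pattern varies with $(p,q)$. Once the three ingredients are in hand, the exact solution counts in every subcase of Cases~(1)--(4) follow by matching the admissible range of $k$ with the unique $C$ realising each value of the period, and the final assertion about $k$-fold symmetry is an immediate consequence of the monotonicity together with the boundary asymptotics of $J$.
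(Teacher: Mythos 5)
Your overall architecture matches the paper's: the first integral, the reduction to a period integral parameterized by the energy (the paper uses the ratio $r=u_+/u_-$ instead of $C$, but this is equivalent), the endpoint asymptotics $\pi/\sqrt{q-p}$ and $\pi/\Xi(p,q)$, the polar duality $(p,q)\mapsto(-q,-p)$, and the direct treatment of the three exceptional pairs. However, your second ``ingredient'' --- strict monotonicity of $J(C)$ for every $(p,q)$ --- is a genuine gap, and in fact it is false. The paper proves monotonicity of $\Theta(p,q,\cdot)$ only on the four parameter regions of Theorem \ref{Thm6.4}, and the remark following it exhibits explicit counterexamples: for $p=0$ and $q>3$ the expansion (\ref{3.14}) shows $\Theta$ is \emph{increasing} in $r$ near $r=1$ while (\ref{3.15}) shows it is \emph{decreasing} near $r=\infty$. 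This is precisely why the statement you are proving distinguishes ``exactly $n$'' from ``at least $n$'' solutions in Subcases $2^\circ$, $3^\circ$, $6^\circ$, $7^\circ$ of Case (4): if your monotonicity claim held universally, every count would be exact, so your proposed proof would establish something stronger than --- and inconsistent with --- the actual behaviour of the period function.

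Because of this, your plan has no mechanism to handle the non-monotone regime, which is where most of the work in the paper lies. There the paper substitutes several tools you do not mention: monotonicity of $\Theta$ in $p$ and in $q$ (Theorem \ref{Thm5.1}), used to sandwich $\Theta(p,q,r)$ between $\Theta$ at comparison parameters (e.g.\ $\Theta(p,q,r)<\Theta(p,p+1,r)\le\pi$ and $\Theta(p,q,r)>\Theta(p,p+3,r)>\pi/2$ via the explicit upper bounds of Lemma \ref{Lem7.4}); the refined second-order asymptotics of Theorem \ref{Thm3.2} near $r=1$ and $r=\infty$, which decide whether the boundary value $\pi/2$ is attained and hence whether one gets $k-1$ or $k-2$ solutions; and the imported existence result of \cite{CCL21} for Subcases $2.1^\circ$ and $6.1^\circ$. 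A smaller inaccuracy: the lower bound $\Xi(p,q)$ on the symmetry order is not a ``convexity/embeddedness constraint'' but simply the limit $\lim_{r\to\infty}\Theta(p,q,r)=\pi/\Xi(p,q)$, computed in (\ref{3.4}), (\ref{3.5}) and (\ref{4.12}). To repair the proposal you would need to restrict the monotonicity claim to the regions where it is provable (essentially via Ben Andrews's Lemma \ref{Lem6.2} transported by the dualities (\ref{4.1})--(\ref{4.3})) and supply the comparison and asymptotic arguments for the rest.
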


\begin{rem}
	When $q=2$ and $p<1$, our Theorem \ref{Thm1.2} reduces to Theorem \ref{Thm1.1} in \cite{Ben03}.
\end{rem}

\begin{rem}
	When $p=0$ (or $q=0$), a stronger result was obtained by Liu-Lu in \cite{LL22}.
\end{rem}

\begin{figure}[htbp]
\centering
\begin{tikzpicture}[scale=0.8][>=Stealth] 
\filldraw[opacity=0.8, draw=white, fill=blue!100] (-4,-4)--(8,8)--(8,-4)--cycle;
\filldraw[opacity=0.8, draw=white, fill=blue!100] (-4,-4)--(0,0)--(-2,-1)--(-5,-4)--cycle;
\filldraw[opacity=0.8, draw=white, fill=blue!100] (1,2)--(0,0)--(8,8)--(8,9)--cycle;
\filldraw[opacity=0.8, draw=white, fill=blue!100] (-2,-1)--(1,2)--(-2,2)--cycle;
\filldraw[opacity=0.8, draw=white, fill=blue!100] (-1,2)--(1,2)--(2,4)--(2,5)--cycle;
\filldraw[opacity=0.8, draw=white, fill=blue!100] (2,4)--(4,8)--(2,6)--cycle;
\filldraw[opacity=0.8, draw=blue!80, fill=blue!100, domain=1.37:1.46] plot(\x,{2*\x/(2-\x)})--(2,6)--(2,5)--cycle;
\filldraw[opacity=0.8, draw=white, fill=blue!100] (-2,1)--(-2,-1)--(-4,-2)--(-5,-2)--cycle;
\filldraw[opacity=0.8, draw=white, fill=blue!100] (-4,-2)--(-8,-4)--(-6,-2)--cycle;
\filldraw[opacity=0.8, draw=blue!80, fill=blue!100, domain=-5.46:-4.37] plot(\x,{2*\x/(2-\x)})--(-5,-2)--(-6,-2)--cycle;
\filldraw[opacity=0.8, draw=white, fill=blue!100] (-2,2)--(-7,2)--(-2,7)--cycle;
\filldraw[opacity=0.6, draw=white, fill=blue!100, domain=-2:-1.145] plot(\x,{\x/(1+\x)})--(-2,7)--(-2,2)--cycle;
\filldraw[thick, pattern color=black, pattern=north east lines, opacity=0.4, draw=blue!20, domain=-2:-1.145] plot(\x,{\x/(1+\x)})--(-2,7)--(-2,2)--cycle;
\filldraw[opacity=0.6, draw=white, fill=blue!100, domain=-8:-2] plot(\x,{\x/(1+\x)})--(-2,2)--(-7,2)--cycle;
\filldraw[thick, pattern color=black, pattern=north east lines, opacity=0.4, draw=blue!20, domain=-8:-2] plot(\x,{\x/(1+\x)})--(-2,2)--(-7,2)--cycle;
\filldraw[opacity=0.8, draw=white, fill=red!100] (-2,-1)--(0,0)--(1,2)--cycle;
\filldraw[opacity=0.8, draw=white, fill=red!100] (1,2)--(8,9)--(8,12)--(4,8)--cycle;
\filldraw[opacity=0.8, draw=white, fill=red!100] (2,6)--(4,8)--(6,12)--(3,12)--(2,11)--cycle;
\filldraw[opacity=0.8, draw=white, fill=red!100, domain=1.46:1.685] plot(\x,{2*\x/(2-\x)})--(2,11)--(2,6)--cycle;
\filldraw[opacity=0.8, draw=white, fill=red!100] (-2,-1)--(-5,-4)--(-8,-4)--cycle;
\filldraw[opacity=0.8, draw=white, fill=red!100] (-6,-2)--(-8,-2)--(-8,-4)--cycle;
\filldraw[opacity=0.8, draw=white, fill=red!100, domain=-8:-5.46] plot(\x,{2*\x/(2-\x)})--(-6,-2)--(-8,-2)--(-8,-1.6)--cycle;
\filldraw[opacity=0.8, draw=white, fill=red!100] (-8,2)--(-7,2)--(-2,7)--(-2,12)--(-4,12)--(-8,8)--cycle;
\filldraw[opacity=0.6, draw=white, fill=red!100, domain=1.46:1.685] plot(\x,{2*\x/(2-\x)})--(0,9)--(0,4)--cycle;
\filldraw[thick, pattern color=black, pattern=north east lines, opacity=0.4, draw=red!20, domain=1.46:1.685] plot(\x,{2*\x/(2-\x)})--(0,9)--(0,4)--cycle;
\filldraw[opacity=0.6, draw=white, fill=red!100, domain=-2:-1.145] plot(\x,{\x/(1+\x)})--(0,9)--(0,4)--cycle;
\filldraw[thick, pattern color=black, pattern=north east lines, opacity=0.4, draw=red!20, domain=-2:-1.145] plot(\x,{\x/(1+\x)})--(0,9)--(0,4)--cycle;
\filldraw[opacity=0.6, draw=white, fill=red!100, domain=-8:-5.46] plot(\x,{2*\x/(2-\x)})--(-4,0)--(-8,0)--cycle;
\filldraw[thick, pattern color=black, pattern=north east lines, opacity=0.4, draw=red!20, domain=-8:-5.46] plot(\x,{2*\x/(2-\x)})--(-4,0)--(-8,0)--cycle;
\filldraw[opacity=0.6, draw=white, fill=red!100, domain=-8:-2] plot(\x,{\x/(1+\x)})--(-4,0)--(-8,0)--cycle;
\filldraw[thick, pattern color=black, pattern=north east lines, opacity=0.4, draw=red!20, domain=-8:-2] plot(\x,{\x/(1+\x)})--(-4,0)--(-8,0)--cycle;
\filldraw[opacity=0.6, draw=white, fill=red!100] (-1,8)--(-1,12)--(-2,12)--(-2,7)--cycle;
\filldraw[thick, pattern color=black, pattern=north east lines, opacity=0.4, draw=red!20] (-1,8)--(-1,12)--(-2,12)--(-2,7)--cycle;
\filldraw[opacity=0.6, draw=white, fill=red!100] (-8,1)--(-7,2)--(-8,2)--cycle;
\filldraw[thick, pattern color=black, pattern=north east lines, opacity=0.4, draw=red!20] (-8,1)--(-7,2)--(-8,2)--cycle;
\filldraw[opacity=0.8, draw=white, fill=green!80] (-8,8)--(-4,12)--(-8,12)--cycle;
\filldraw[opacity=0.8, draw=white, fill=green!80] (4,8)--(8,12)--(6,12)--cycle;
\filldraw[opacity=0.8, draw=white, fill=green!80] (2,11)--(3,12)--(2,12)--cycle;
\filldraw[opacity=0.8, draw=white, fill=green!80, domain=1.685:1.714] plot(\x,{2*\x/(2-\x)})--(2,12)--(2,11)--cycle;
\filldraw[opacity=0.6, draw=white, fill=green!80, domain=1.685:1.714] plot(\x,{2*\x/(2-\x)})--(0,12)--(0,9)--cycle;
\filldraw[thick, pattern color=black, pattern=north east lines, opacity=0.4, draw=green!20, domain=1.685:1.714] plot(\x,{2*\x/(2-\x)})--(0,12)--(0,9)--cycle;
\filldraw[opacity=0.6, draw=white, fill=green!80] (-1,8)--(0,9)--(0,12)--(-1,12)--cycle;
\filldraw[thick, pattern color=black, pattern=north east lines, opacity=0.4, draw=green!20] (-1,8)--(0,9)--(0,12)--(-1,12)--cycle;
\filldraw[opacity=0.8, draw=white, fill=yellow!100, domain=1.37:1.46] plot(\x,{2*\x/(2-\x)})--(-2,2)--(-1,2)--cycle;
\filldraw[opacity=0.8, draw=white, fill=yellow!100, domain=-5.46:-4.37] plot(\x,{2*\x/(2-\x)})--(-2,1)--(-2,2)--cycle;
\filldraw[opacity=0.8, fill=black!80] (-2,2) circle (0.1);
\filldraw[opacity=0.8, fill=black!80] (1,2) circle (0.1);
\filldraw[opacity=0.8, fill=black!80] (-2,-1) circle (0.1);
\draw[->](-8,0)--(8.3,0) node[right]{$p$-axis};
\draw[->](0,-4)--(0,12.3) node[above]{$q$-axis};
\foreach \x in {-7, -6, -5, -4, -3, -2, -1, 1, 2, 3, 4, 5, 6, 7} \draw (\x, 1pt) -- (\x, -1pt) node[anchor=north] {$\x$};
\foreach \y in {-3, -2, -1, 1, 2, 3, 4, 5, 6, 7, 8, 9, 10, 11} \draw (1pt, \y) -- (-1pt, \y) node[anchor=east] {$\y$};
\node[below right] at (0,0) {$0$};
\draw [densely dashed, draw=yellow] (-8,2)--(8,2);
\node[ right] at (8,2) {$q=2$};
\draw [densely dashed] (-8,1)--(8,1);
\draw [densely dashed] (-2,-4)--(-2,12);
\draw [densely dashed] (-1,-4)--(-1,12);
\draw [densely dashed] (2,-4)--(2,12);
\draw [densely dashed] (-8,-2)--(8,-2);
\draw [densely dashed] (-8,8)--(4,-4);
\node[above right] at (-7,7) {$q=-p$};
\draw [densely dashed] (-4,-4)--(8,8);
\node[above right] at (8,8) {$q-p=0$};
\draw [densely dashed] (-5,-4)--(8,9);
\node[above right] at (8,9) {$q-p=1$};
\draw [densely dashed] (-8,-4)--(8,12);
\node[above ] at (8,12) {$q-p=4$};
\draw [densely dashed] (-8,1)--(3,12);
\node[above ] at (3,12) {$q-p=9$};
\draw [densely dashed] (-8,8)--(-4,12);
\node[above ] at (-4,12) {$q-p=16$};
\draw [densely dashed] (-1,-2)--(6,12);
\node[above left] at (5,10) {$q=2p$};
\draw [densely dashed] (-8,-4)--(2,1);
\node[below ] at (-5.5,-3) {$2q=p$};
\draw [densely dashed] [domain = -8: 1.714] plot ({\x},{2*\x/(2-\x)});
\node[above ] at (1,8) {$q=\frac{2p}{2-p}$};
\draw [densely dashed] [domain = -8: -1.09] plot ({\x},{\x/(1+\x)});
\node[above ] at (-2,10) {$q=\frac{p}{p+1}$};
\end{tikzpicture}
\caption{Number $k$ of embedded solutions of Eq.(\ref{1.2}) \\
Blue domain: $k=1$. Red domain: $k=2$. Green domain: $k=3$. Blue domain with shadow: $k\ge1$. Red domain with shadow: $k\ge2$. Green domain with shadow: $k\ge3$. Yellow domain: a non-constant solution must be $\pi$-periodic.}
\label{fig-1}
\end{figure}

To establish the classification, we convert Eq.(\ref{1.2}) for the embedded solution into an integral $\Theta(p,q,r)$ and study its asymptotic behavior, duality and monotonicity. This argument allows a classification of solutions without the assumption of embeddedness.

\begin{thm}\label{Thm1.3}
We consider the immersed solutions of Eq.(\ref{1.2}).
	
\begin{enumerate}
	\item[Case (1)] If $p\ge q$, all immersed solutions are the constant solutions.
	
	\item[Case (2)] If $(p,q)=(1,2),\ (-2,-1)$ or $(-2,2)$, all immersed solutions are given in Case (2) $1^\circ$, Case (2) $2^\circ$ and  Case (3) $1^\circ$ of Theorem \ref{Thm1.2}, respectively.
	
	\item[Case (3)] Let $m$ and $n$ be mutually prime. If $p<q$ and $(p,q)\neq(1,2),\ (-2,-1)$ or $(-2,2)$, there exists a smooth, strictly locally convex, non-constant immersed solution, which corresponds to a curve $\Gamma_{n,m,p,q}$ with total curvature $2\pi n$ and $m$ maxima of solution if $\frac{m}{n}$ lies strictly between $\sqrt{q-p}$ and $\Xi(p,q)$. Moreover, if in addition $(p,q)$ satisfies one of the following cases:
	\begin{enumerate}
		\item[(i)]\quad $ p\le -2,\ q\ge 2$, or
		\item[(ii)]\quad $p\le -2,\ q\le\frac{2p}{2-p}$, or
		\item[(iii)]\quad $q\ge 2,\ p\ge\frac{2q}{2+q}$, or
		\item[(iv)]\quad $p\ge -2,\ q\le 2,\ q\ge\frac{2p}{2-p}$,
	\end{enumerate}
	then there exists exactly one such solution up to rotation and scaling.

\end{enumerate}
\end{thm}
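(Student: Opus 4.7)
The plan is to exploit the fact that (\ref{1.2}) admits an explicit first integral, reducing the classification of immersed solutions to the analysis of a single ``period function'' of one parameter. Multiplying (\ref{1.2}) by $u^{p-1}u_\theta$ and recognising both sides as exact derivatives shows that
$$\tfrac{1}{q}(u^2+u_\theta^2)^{q/2} - \tfrac{1}{p}u^p = C$$
is conserved along every solution (with the usual logarithmic replacements when $p=0$ or $q=0$). Thus every immersed solution obeys an energy identity $u_\theta^2 = \Phi_{p,q,C}(u)-u^2$, and any non-constant solution oscillates between turning points $u_{\min}<u_{\max}$ (zeros of the right-hand side). The angle elapsed while $u$ increases from $u_{\min}$ to $u_{\max}$ is
$$\Theta(p,q,r) = \int_{u_{\min}}^{u_{\max}}\frac{du}{\sqrt{\Phi_{p,q,C}(u)-u^2}},$$
where $r$ parametrises the family (for instance $r=u_{\min}/u_{\max}$ after quotienting by the scaling). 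A solution closes up as an immersed curve with total curvature $2\pi n$ and $m$ evenly spaced maxima precisely when $m\,\Theta(p,q,r)=\pi n$, i.e., $\Theta(p,q,r)=\pi n/m$ with $\gcd(m,n)=1$.

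Next I would study the two endpoints of the parameter interval. As $r$ tends to the value corresponding to the constant solution $u\equiv 1$, linearising (\ref{1.2}) gives $\phi_{\theta\theta}+(q-p)\phi=0$, whence $\Theta\to\pi/\sqrt{q-p}$. At the opposite extreme one of the turning points degenerates ($u_{\min}\to 0$ or $u_{\max}\to\infty$); a Laplace-type expansion of the integrand, with the three sub-regimes $0\le p<q$, $p<0<q$, $p<q\le 0$ giving rise respectively to the three branches of the piecewise definition of $\Xi(p,q)$, shows that $\Theta\to\pi/\Xi(p,q)$. By continuity of $\Theta$ in $r$ and the intermediate value theorem, for every coprime pair $(m,n)$ with $m/n$ strictly between $\sqrt{q-p}$ and $\Xi(p,q)$ there is at least one value of $r$ with $\Theta(p,q,r)=\pi n/m$, which produces the desired immersed solution $\Gamma_{n,m,p,q}$ and establishes the existence statement of Case (3).

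For the uniqueness claims (i)–(iv) I would prove strict monotonicity of $r\mapsto \Theta(p,q,r)$ on the full parameter interval. The key structural tool is the duality $u\mapsto 1/u$ (interchange of support and radial functions), which induces the symmetry $(p,q)\mapsto(-q,-p)$ at the level of the energy identity and collapses the four regimes (i)–(iv) to essentially two. After the substitution $u=u_{\max}s$ the integral $\Theta$ becomes one over the fixed interval $[r,1]$; differentiating under the integral sign and integrating by parts yields an expression whose sign can be read off in each of the stated parameter regions, giving monotonicity and hence exactly one $r$ per admissible $m/n$. Cases (1) and (2) are handled separately: for $p\ge q$ the energy identity together with a sign analysis of $\Phi_{p,q,C}(u)-u^2$ forces $u$ to be constant, extending the uniqueness results of [HZ18] and [CL21] to the immersed setting; for the three exceptional pairs $(1,2)$, $(-2,-1)$, $(-2,2)$ equation (\ref{1.2}) reduces respectively to $u_{\theta\theta}+u=1$, its $u\mapsto 1/u$-dual, and the affine normal equation $u^3(u_{\theta\theta}+u)=1$, all of which integrate explicitly into the families recorded in Theorem \ref{Thm1.2}.

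The principal obstacle I expect is the monotonicity of $\Theta(p,q,r)$ in $r$. The integrand is a transcendental combination of $u^p$ and of $(qC+qu^p/p)^{2/q}$, and a naive differentiation under the integral yields a quantity whose sign is not manifest. Pinning down the correct inequality will require splitting the integral near the two turning points, comparing the integrand to explicit model functions on each piece, and exploiting the duality $(p,q)\leftrightarrow(-q,-p)$ to reduce the number of regimes that must be handled directly.
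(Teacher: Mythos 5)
Your blueprint coincides with the paper's: a first integral reduces (\ref{1.2}) to a one--parameter family, the closure condition is $\Theta(p,q,r)=\pi n/m$, the two limits $\pi/\sqrt{q-p}$ (as $r\to 1$) and $\pi/\Xi(p,q)$ (as $r\to\infty$) plus continuity give existence by the intermediate value theorem, and uniqueness in regimes (i)--(iv) is to follow from strict monotonicity of $r\mapsto\Theta(p,q,r)$. Cases (1), (2) and the existence half of Case (3) are correctly handled along exactly the paper's lines (the paper's Theorems \ref{Thm3.1} and \ref{Thm3.2} and Corollary \ref{Cor4.3}).

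The genuine gap is the monotonicity of $\Theta$ in $r$, which you defer, and your proposed toolkit would not close it. First, the polar duality $u\mapsto 1/u$, i.e.\ $(p,q)\mapsto(-q,-p)$, is not enough: the boundary curves $p=\frac{2q}{2+q}$ and $q=\frac{2p}{2-p}$ delimiting regimes (ii)--(iv) are not produced by that involution. They are the images of the lines $p=-2$, $q=2$ under a second, power-law duality $w=u^{(q-p)/q}$, $\tau=\frac{q-p}{q}\theta$, which sends $(p,q)\mapsto(\frac{pq}{p-q},q)$ \emph{and rescales $r$ to $r^{(q-p)/q}$} (relations (\ref{4.1})--(\ref{4.2}) in the paper); without this transformation regimes (iii) and (iv) are out of reach. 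Second, ``differentiating under the integral sign'' after $u=u_{\max}s$ does not work as stated: the domain $[1/r,1]$ still moves with $r$ and the integrand is singular at both endpoints. The paper instead substitutes $x^{\beta_0}=\frac{r^{\beta_0}-1}{2}z+\frac{r^{\beta_0}+1}{2}$ onto the fixed interval $z\in(-1,1)$, with the exponent $\beta_0=\frac{2q-p}{3}$ chosen precisely so that the transformed integrand $\tilde J$ has no first-order variation at $r=1$ (necessary, since $\Theta-\pi/\sqrt{q-p}=O((r-1)^2)$, so pointwise monotonicity of the integrand can only hold for this parametrization); it then imports Ben Andrews's $q=2$ monotonicity lemma as a black box and upgrades it to general $q$ via weighted power-mean inequalities applied to the characterization $\frac{\partial K_\rho}{\partial t}\gtrless\frac{q}{z+t}(K_\rho^{1-2/q}-K_\rho)$. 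None of these ingredients appears in your outline, and since the monotonicity is exactly what separates ``at least one'' from ``exactly one'' solution in (i)--(iv), the uniqueness assertion of Case (3) remains unproved in your proposal.
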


$\ $

The paper is organized as follows. In section \ref{sec:2}, we construct a period function $\Theta(p,q,r)$ which is closely related to the solution of Eq.(\ref{1.2}). In section \ref{sec:3}, we compute some special values of $\Theta(p,q,r)$ and find out asymptotic behaviours as $r$ tends to $1$ and infinity. In section \ref{sec:4}, we discover some duality relations of $\Theta(p,q,r)$, which is of great use to simplify discussion. In section \ref{sec:5}, we prove that $\Theta(p,q,r)$ is monotone in $p$ and $q$. In section \ref{sec:6}, we prove that $\Theta(p,q,r)$ is monotone in $r$ for some $(p,q)$. In section \ref{sec:7}, we give the proofs of Theorem \ref{Thm1.2} and Theorem \ref{Thm1.3}.

\begin{ack}
This work was supported by NSFC grant No.11831005, NSFC Grant No.12126405 and NSFC-FWO Grant No. 11961131001.
\end{ack}

\section{The period function}
\label{sec:2}

Let $u(\theta)$ be the support function of the solution for the planar isotropic $L_p$ dual Minkowski problem, that is, $u$ satisfies the following second-order ordinary differential equation

\begin{equation}
    u^{1-p}(u_{\theta}^2+u^2)^{\frac{q-2}{2}}(u_{\theta\theta}+u)=1.\label{2.1}
\end{equation}

For $q\neq0$, notice that (\ref{2.1}) can be rewritten as 
\begin{align*}
	&  \frac{2}{q}((u_{\theta}^2+u^2)^{\frac{q}{2}})_{\theta}=\frac{2}{p}(u^p)_{\theta}, & p\neq0,\\
	&  \frac{2}{q}((u_{\theta}^2+u^2)^{\frac{q}{2}})_{\theta}=2(\log u)_{\theta}, & p=0,
\end{align*}
thus a first integral of (\ref{2.1}) is given by
\begin{equation}\label{2.2}
\begin{split}
    &  (u_{\theta}^2+u^2)^{\frac{q}{2}}-\frac{q}{p}u^p=E,\quad\quad\quad\quad  p\neq0,\\
    &  (u_{\theta}^2+u^2)^{\frac{q}{2}}-q\log u=E,\quad\quad\quad  p=0.
\end{split}
\end{equation}
for some constant $E$.

Since the solution of (\ref{2.1}) is a periodic positive function of $\theta$, with maximum and minimum values of $u$ determined by $p,\ q$ and $E$. For convenience, we parameterize the solutions by the ratio $r$ of the maximum and minimum values $u_{\pm}$ of $u$: it follows from (\ref{2.2}) that
\begin{align*}
	&  u_{\pm}^q-\frac{q}{p}u_{\pm}^p=E, & p\neq0,\\
	&  u_{\pm}^q-q\log u_{\pm}=E, & p=0,
\end{align*}
then we have
\begin{equation}\label{2.3}
\begin{split}
    & E=\left(\frac{q(r^p-1)}{p(r^q-1)}\right)^{\frac{p}{q-p}}\left(\frac{q(r^p-r^q)}{p(r^q-1)}\right),\quad p\neq0,\\
    & E=\frac{q\log r}{r^q-1}-\log\left(\frac{q\log r}{r^q-1}\right),\quad p=0.
\end{split}
\end{equation}

It is well known that the solution of (\ref{2.1}) is unique when $p>q$, and is unique up to a constant when $p=q$, see, e.g. \cite{HZ18, CL21}. Hence we only consider the case $p<q$ from now on. 

It follows from (\ref{2.3}) that $E(r)$ is increasing for $r\ge1$ when $q>0$, while $E(r)$ is decreasing for $r\ge 1$ when $q<0$. Moreover, $E\to \frac{p-q}{p}$ as $r\to 1$ for $p\neq0$ and $E\to 1$ as $r\to 1$ for $p=0$; $E\to \infty$ as $r\to \infty$ for $p\le 0$ and $E\to 0$ as $r\to \infty$ for $p>0$.

Using (\ref{2.2}) and (\ref{2.3}), we obtain
\begin{equation}\label{2.4}
\begin{split}
    &  u_{\theta}=\pm \left[\left(\left(\frac{q(r^p-1)}{p(r^q-1)}\right)^{\frac{p}{q-p}}\left(\frac{q(r^p-r^q)}{p(r^q-1)}\right)+\frac{q}{p}u^p\right)^{\frac{2}{q}}-u^2\right]^{\frac{1}{2}},\quad p\neq0,\\
    &  u_{\theta}=\pm \left[\left(\frac{q\log r}{r^q-1}-\log\left(\frac{q\log r}{r^q-1}\right)+q\log u\right)^{\frac{2}{q}}-u^2\right]^{\frac{1}{2}},\quad p=0.
\end{split}
\end{equation}
Thus the total curvature of the unique (up to rotation) smooth, strictly locally convex curve segment with monotone support function satisfying (\ref{2.2}) and endpoints tangent to the circles of radius $1$ and $r$ about the origin is given by
\begin{equation*}
     \Theta(p,q,r) 
     =\int_{\theta_{-}}^{\theta_{+}}d\theta
    =\int_{u_{-}}^{u_{+}}\frac{du}{u_{\theta}}.
\end{equation*}

\begin{enumerate}
    \item If $p\neq0$ and $q\neq0$, denote $x=\frac{u}{u_{-}}\in (1,r)$, the period function $\Theta$ becomes
    \begin{equation}\label{2.5}
    \begin{split}
    \Theta(p,q,r) &=\int_1^r \frac{dx}{\sqrt{\left(\left(\frac{q(r^p-1)}{p(r^q-1)}\right)^{\frac{p}{q-p}}\left(\frac{q(r^p-r^q)}{p(r^q-1)}\right) (u_{-})^{-q}+\frac{q}{p}(u_{-})^{p-q}x^p\right)^{\frac{2}{q}}-x^2}}\\
    &=\int_1^r \frac{ds}{\sqrt{\left(\frac{r^p-r^q}{r^p-1}+\frac{r^q-1}{r^p-1} x^p\right)^{\frac{2}{q}}-x^2}}.
    \end{split}
    \end{equation}
    
    \item  If $p=0$, denote $x=\frac{u}{u_{-}}\in (1,r)$, the period function $\Theta$ becomes
    \begin{equation}\label{2.6}
    \begin{split}
    \Theta(0,q,r) 
    &=\int_1^r \frac{dx}{\sqrt{\left(
    \left(\frac{q\log r}{r^q-1}-\log\left(\frac{q\log r}{r^q-1}\right)
    \right) (u_{-})^{-q}
    + (u_{-})^{-q}\log (u_{-})^q+q (u_{-})^{-q}\log x
    \right)^{\frac{2}{q}}-x^2}}\\
    &=\int_1^r \frac{dx}{\sqrt{\left(1+\frac{r^q-1}{\log r}\log x\right)^{\frac{2}{q}}-x^2}}.
    \end{split}
    \end{equation}
    
    \item  If $q=0$, a similar calculation yields
    \begin{align*}
         \log(u_{\theta}^2+u^2)-\frac{2}{p}u^p=E,
    \end{align*}
    where
    \begin{align*}
        E=\frac{2}{p}\log\left(\frac{p\log r}{r^p-1}\right)-\frac{2\log r}{r^p-1}.
    \end{align*}
    Then the period function $\Theta$ becomes
    \begin{equation}\label{2.7}
    \begin{split}
    \Theta(p,0,r) &=\int_1^r \frac{dx}{\sqrt{(u_{-})^{-2}\exp{\left(2\log (u_{-})-\frac{2}{p}(u_{-})^p+\frac{2}{p}(u_{-})^p x^p\right)}-x^2}}\\
    &=\int_1^r \frac{dx}{\sqrt{r^{\frac{2(x^p-1)}{r^p-1}}-x^2}}.
    \end{split}
    \end{equation}
\end{enumerate}

In particular, when $p<q=2$, it follows from (\ref{2.5}) and (\ref{2.6}) that
\begin{equation}
\begin{split}
    & \Theta(p,2,r)=\int_1^r \frac{dx}{\sqrt{\frac{r^p-r^2}{r^p-1}+\frac{r^2-1}{r^p-1} x^p-x^2}},\quad p\neq 0,\\
    & \Theta(0,2,r)=\int_1^r \frac{dx}{\sqrt{1+\frac{r^2-1}{\log r}\log x-x^2}},\quad p=0,
\end{split}
\end{equation}
which are the same as the functions $\Theta\left(\frac{1}{1-p},r\right)$ in \cite{Ben03}.

\section{Special values and asymptotic behaviors}
\label{sec:3}

We only consider the case $(p,q)\in D$, where $D=\{(p,q):\ p<q\}$.

\begin{thm}\label{Thm3.1}
$\Theta$ is continuous on $D\times(1,\infty)$, and has following limiting values:
\begin{align}
    & \label{3.1}
    \lim\limits_{p\to -\infty}\Theta(p,q,r)=\arccos\frac{1}{r},\\
    & \label{3.2}
    \lim\limits_{q\to +\infty}\Theta(p,q,r)=\arccos\frac{1}{r},\\
    & \label{3.3}
    \lim\limits_{r\to 1}\Theta(p,q,r)=\frac{\pi}{\sqrt{q-p}},\\
    & \label{3.4}
    \lim\limits_{r\to \infty}\Theta(p,q,r)=\frac{\pi}{2},& p<0<q,\\
    & \label{3.5}
    \lim\limits_{r\to \infty}\Theta(p,q,r)=\frac{q}{q-p}\cdot\frac{\pi}{2},& 0\le p<q.
\end{align}

Moreover, we also have
\begin{align}
    & \label{3.6}
    \Theta(1,2,r)=\pi, & r>1,\\
    & \label{3.7}
    \Theta(-2,-1,r)=\pi,& r>1,\\
    & \label{3.8}
    \Theta(-2,2,r)=\frac{\pi}{2},& r>1.
\end{align}
\end{thm}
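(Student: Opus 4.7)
The plan is to work directly from the explicit integral representations (\ref{2.5})--(\ref{2.7}) for $\Theta$, absorb the endpoint singularities into a smooth substitution, then extract each asymptotic by pointwise limits inside the integral and evaluate the three special values by closed-form factorization. A key preliminary observation is that in (\ref{2.5}) the radicand $F(x;p,q,r)^{2/q}-x^2$ (where $F$ denotes the parenthesized quantity) vanishes to first order at $x=1$ and $x=r$, so one may write
\begin{equation*}
F(x;p,q,r)^{2/q}-x^2 \;=\; (x-1)(r-x)\,G(x;p,q,r),
\end{equation*}
with $G$ positive and jointly continuous on a neighborhood of $[1,r]\times D\times(1,\infty)$. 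Applying the substitution $x=1+(r-1)\sin^2\phi$ then recasts $\Theta$ as
\begin{equation*}
\Theta(p,q,r)=\int_0^{\pi/2}\frac{2\,d\phi}{\sqrt{G\bigl(1+(r-1)\sin^2\phi;\,p,q,r\bigr)}},
\end{equation*}
a proper integral of a bounded continuous integrand. From this representation, continuity on $D\times(1,\infty)$ follows by dominated convergence, and (\ref{3.3}) is obtained by Taylor-expanding numerator and denominator in $(r-1)$ to extract $G(1;p,q,1)=q-p$. As a sanity check, linearizing (\ref{2.1}) about $u\equiv 1$ gives $v_{\theta\theta}+(q-p)v=0$, whose half-period from minimum to maximum is $\pi/\sqrt{q-p}$.

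For (\ref{3.1}) and (\ref{3.2}), I would argue with (\ref{2.5}) directly. When $p\to-\infty$ with $r>1$ fixed, every power $r^p$ and $x^p$ on $x\in(1,r]$ tends to $0$, so $F(x;p,q,r)\to r^q$ pointwise, the integrand converges to $(r^2-x^2)^{-1/2}$, and a majorant from the Euler-type substitution above lets one pass to the limit, yielding $\arccos(1/r)$. An identical argument, with the dominant term inside the parentheses being $r^q(x^p-1)/(r^p-1)$ so that $(\,\cdot\,)^{2/q}\to r^2$, handles $q\to+\infty$.

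For (\ref{3.4}) and (\ref{3.5}) I would change variables by $x=r\tau$, so the integral runs over $\tau\in(1/r,1)$. When $p<0<q$, the computation $F(r\tau;p,q,r)\sim r^q$ as $r\to\infty$ (using $(r\tau)^p\to 0$) gives integrand $\to(1-\tau^2)^{-1/2}$ and hence $\Theta\to\pi/2$. When $0\le p<q$, the dominant asymptotics $F(r\tau;p,q,r)^{2/q}\sim r^2\tau^{2p/q}$ reduces the limit to
\begin{equation*}
\int_0^1\frac{d\tau}{\sqrt{\tau^{2p/q}-\tau^2}},
\end{equation*}
and the substitution $\tau^{2(q-p)/q}=\sin^2\psi$ evaluates this to $\frac{q\pi}{2(q-p)}$. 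The case $p=0$ is handled from (\ref{2.6}) by the same $x=r\tau$ substitution.

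Finally, the three exceptional values fall to elementary calculations. Substituting $(p,q)=(1,2)$ collapses the radicand in (\ref{2.5}) to $(x-1)(r-x)$, giving $\Theta(1,2,r)=\pi$ by the standard formula $\int_1^r dx/\sqrt{(x-1)(r-x)}=\pi$. For $(p,q)=(-2,2)$, direct simplification yields $F-x^2=(x^2-1)(r^2-x^2)/x^2$, and the substitution $y=x^2$ gives $\Theta(-2,2,r)=\tfrac{1}{2}\int_1^{r^2}dy/\sqrt{(y-1)(r^2-y)}=\pi/2$. For $(p,q)=(-2,-1)$, a longer but similarly elementary factorization produces
\begin{equation*}
\Theta(-2,-1,r)=\int_1^r\frac{(x^2+r)\,dx}{x\sqrt{(x^2-1)(r^2-x^2)}}=\tfrac{1}{2}\!\int_1^{r^2}\!\frac{dy}{\sqrt{(y-1)(r^2-y)}}+\tfrac{r}{2}\!\int_1^{r^2}\!\frac{dy}{y\sqrt{(y-1)(r^2-y)}},
\end{equation*}
and the $y\mapsto 1/y$ substitution in the second integral reduces it to another standard form with value $\pi/r$, so that $\Theta(-2,-1,r)=\pi/2+\pi/2=\pi$. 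The main obstacle throughout is uniformity near the endpoint singularities: pointwise convergence is easy, but a majorant that does not blow up as the parameter approaches its limit is required, and the $x=1+(r-1)\sin^2\phi$ substitution provides it in a single stroke.
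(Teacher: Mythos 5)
Your proposal is correct and follows essentially the same route as the paper: regularize the endpoint singularities of the integral by a substitution onto a fixed interval (you use $x=1+(r-1)\sin^2\phi$ with a Hadamard-type factorization of the radicand; the paper uses $x^{\beta}=\frac{r^{\beta}-1}{2}z+\frac{r^{\beta}+1}{2}$), then obtain \eqref{3.1}--\eqref{3.5} by pointwise limits under the integral sign and \eqref{3.6}--\eqref{3.8} by explicit factorization of the radicand. Your treatment of the three special values is in fact more detailed than the paper's, which dismisses them as ``direct calculation,'' and your closed-form evaluations check out.
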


\begin{proof}
At first, it follows from (\ref{2.5}) and (\ref{2.6}) that
\begin{align*}
    & \lim\limits_{p\to -\infty}\Theta(p,q,r)
    =\int_1^r \frac{dx}{\sqrt{r^2-x^2}}=\arccos\frac{1}{r},\\
    & \lim\limits_{q\to +\infty}\Theta(p,q,r)
    =\int_1^r \frac{dx}{\sqrt{r^2-x^2}}=\arccos\frac{1}{r}.
\end{align*}

Now let $\beta\in\mathbb{R}\backslash\{0\}$, and set $x^\beta=v$, where
\begin{align*}
    v=\frac{r^\beta-1}{2}z+\frac{r^\beta+1}{2},\quad\quad z\in (-1,1).
\end{align*}
Then the change of variables formula implies that
\begin{align*}
    \Theta(p,q,r)=\int_{-1}^1\frac{dz}{\sqrt{\tilde{I}(p,q,\beta,r,z)}},
\end{align*}
where
\begin{equation}\label{3.9}
\begin{split}
    & \tilde{I}(p,q,\beta,r,z)=\frac{4\beta^2}{(r^\beta-1)^2}\left[\left(\frac{r^p-r^q}{r^p-1}v^{\frac{q(\beta-1)}{\beta}}+\frac{r^q-1}{r^p-1}v^{\frac{p+q(\beta-1)}{\beta}}\right)^{\frac{2}{q}}-v^2\right],\quad p\neq0,\\
    & \tilde{I}(0,q,\beta,r,z)=\frac{4\beta^2}{(r^\beta-1)^2}\left[v^{\frac{2(\beta-1)}{\beta}}\left(1+\frac{r^q-1}{\beta\log r}\log v\right)^{\frac{2}{q}}-v^2\right].
\end{split}
\end{equation}

When $r$ is close to $1$, a routine computation by Taylor expansions gives
\begin{align}\label{3.10}
    \tilde{I}(p,q,\beta,r,z)
    =(q-p)(1-z^2)\left[1+\frac{z}{2}\left(\beta-\frac{2q-p}{3}\right)(r-1)+o(r-1)\right],
\end{align}
from which it follows that
\begin{align}\label{3.11}
    \lim\limits_{r\to 1}\Theta(p,q,r)=\int_{-1}^1\frac{dz}{\sqrt{(q-p)(1-z^2)}}=\frac{\pi}{\sqrt{q-p}}.
\end{align}
For the completeness of the paper, we provide the proof of (\ref{3.10}) in the Appendix \ref{appendix}.

When $r$ tends to infinity, let $y=\frac{z+1}{2}\in (0,1)$ and choose $\beta=1$, we have
\begin{equation}\label{3.12}
\begin{split}
    \tilde{I}(0,q,1,r,z)= & 4\left[1-y^2+\frac{2\log y}{q\log r}+o\left(\frac{1}{\log r}\right)\right],\\
    \tilde{I}(p,q,1,r,z)= & 4\left[1-y^2+(2-2y)\frac{1}{r}+\frac{2}{q}(1-y^p)r^p+o\left(r^p\right)+o\left(\frac{1}{r}\right)\right] ,\quad p<0<q,\\
    \tilde{I}(p,q,1,r,z)= & 4\left[y^{\frac{2p}{q}}-y^2+\left(\left(2+\frac{2p}{q}\frac{1-y}{y}\right)y^{\frac{2p}{q}}-2y\right)\frac{1}{r}\right.\\
    &\quad \left.+\frac{2}{q}\left( y^{\frac{2p}{q}}-y^{\frac{(2-q)p}{q}}\right)\frac{1}{r^p}+o\left(\frac{1}{r}\right)+o\left(\frac{1}{r^p}\right)\right],\quad 0<p<q,
\end{split}
\end{equation}
from which it follows that
\begin{equation}\label{3.13}
\begin{split}
    & \lim\limits_{r\to \infty}\Theta(p,q,r)=\int_{0}^1\frac{dy}{\sqrt{1-y^2}}=\frac{\pi}{2},\quad  p<0<q,\\
    & \lim\limits_{r\to \infty}\Theta(p,q,r)=\int_{0}^1\frac{dy}{\sqrt{y^{\frac{2p}{q}}-y^2}}=\frac{q}{q-p}\cdot\frac{\pi}{2},\quad 0\le p<q.
\end{split}
\end{equation}

Finally, (\ref{3.6}), (\ref{3.7}) and (\ref{3.8}) can be deduced by direct calculation.
\end{proof}

Next we give the asymptotic behaviors of $\Theta(p,q,r)$ when $r$ tends to infinity.

\begin{thm}\label{Thm3.2}
When $r$ tends to $1$, we have
\begin{align}\label{3.14}
    & \Theta(p,q,r)=\frac{\pi}{\sqrt{q-p}}+\frac{\pi}{96\sqrt{q-p}}(q^2-pq+p^2+3p-3q)(r-1)^2+o\left((r-1)^2\right),\quad q>0.
\end{align}

When $r$ tends to infinity, we have
\begin{align}
    & \label{3.15}
    \Theta(0,q,r)=\frac{\pi}{2}+\frac{\pi}{2q\log r}+o\left(\frac{1}{\log r}\right), & q>0,\\
    & \label{3.16}
    \Theta(p,q,r)=\frac{\pi}{2}+\frac{1}{q}\int_0^1\frac{y^p-1}{(1-y^2)^{\frac{3}{2}}}dy\cdot r^p+o\left(r^p\right), & -1<p<0,\ q>-p. 
\end{align}
\end{thm}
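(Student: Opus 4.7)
The overall strategy is to continue, by one more order, the Taylor expansions in the relevant small parameter already used in the proof of Theorem \ref{Thm3.1}. Starting from
\begin{align*}
    \Theta(p,q,r) = \int_{-1}^1 \frac{dz}{\sqrt{\tilde I(p,q,\beta,r,z)}}
\end{align*}
with $\tilde I$ as in \eqref{3.9}, each of the three asymptotics will be obtained by expanding the integrand and integrating term-by-term; dominated convergence justifies the interchange in every case.

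For the $r \to 1$ expansion \eqref{3.14}, the plan is to sharpen \eqref{3.10} to
\begin{align*}
    \tilde I(p,q,\beta,r,z) = (q-p)(1-z^2)\bigl[\,1 + A(z,\beta)(r-1) + B(z,\beta)(r-1)^2 + o((r-1)^2)\,\bigr],
\end{align*}
where $A(z,\beta) = \tfrac{z}{2}\bigl(\beta - \tfrac{2q-p}{3}\bigr)$ is read off from \eqref{3.10}. Binomial expansion gives
\begin{align*}
    \frac{1}{\sqrt{\tilde I}} = \frac{1}{\sqrt{(q-p)(1-z^2)}}\Bigl[\,1 - \tfrac{1}{2}A(r-1) + \bigl(\tfrac{3}{8}A^2 - \tfrac{1}{2}B\bigr)(r-1)^2 + o((r-1)^2)\,\Bigr].
\end{align*}
The $(r-1)$-term is odd in $z$ and integrates to zero, so the quadratic correction is governed by the two elementary moments $\int_{-1}^1(1-z^2)^{-1/2}\,dz = \pi$ and $\int_{-1}^1 z^2(1-z^2)^{-1/2}\,dz = \pi/2$. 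The principal obstacle is the explicit computation of the polynomial $B(z,\beta)$: one must expand every factor appearing in \eqref{3.9} up to order $(r-1)^2$, a lengthy but routine exercise. Choosing $\beta = (2q-p)/3$ kills $A$ outright and simplifies the bookkeeping, and independence of the final answer on $\beta$ provides a useful internal consistency check. After collecting terms, the coefficient should collapse to precisely $\tfrac{\pi}{96\sqrt{q-p}}(q^2 - pq + p^2 + 3p - 3q)$.

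For \eqref{3.15} I substitute $y=(z+1)/2$ in the first line of \eqref{3.12} and expand the reciprocal square root to first order in $1/\log r$, obtaining
\begin{align*}
    \Theta(0,q,r) = \frac{\pi}{2} - \frac{1}{q\log r}\int_0^1 \frac{\log y}{(1-y^2)^{3/2}}\,dy + o\Bigl(\tfrac{1}{\log r}\Bigr).
\end{align*}
The integral is computed by $y = \sin\vartheta$ followed by integration by parts with $u = \log\sin\vartheta$, $dv = \sec^2\vartheta\,d\vartheta$: the boundary terms vanish at both endpoints (using $\vartheta\log\vartheta \to 0$ near $0$ and a local quadratic expansion of $\log\sin\vartheta$ near $\pi/2$), leaving $-\int_0^{\pi/2}d\vartheta = -\tfrac{\pi}{2}$. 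Substituting this value yields $+\pi/(2q\log r)$, proving \eqref{3.15}.

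For \eqref{3.16}, the hypothesis $-1 < p < 0$ gives $r^{-1} = o(r^p)$ as $r\to\infty$, so the leading correction in the second line of \eqref{3.12} is the single term $\tfrac{2}{q}(1-y^p)r^p$. Expanding the integrand to first order in $r^p$ and integrating yields the claimed formula. Integrability of $\int_0^1(y^p-1)(1-y^2)^{-3/2}\,dy$ follows from $p>-1$ at $y=0$ (where the integrand behaves like $y^p$) and from the linear vanishing of $y^p-1$ at $y=1$ (which reduces the $(1-y)^{-3/2}$ singularity to an integrable $(1-y)^{-1/2}$). The interchange of limit and integral is again handled by dominated convergence, splitting $[0,1]$ into a compact sub-interval and two small neighbourhoods of the endpoints on which uniform bounds are straightforward.
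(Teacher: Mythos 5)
Your proposal follows essentially the same route as the paper: extend the Taylor expansions of $\tilde I$ from the proof of Theorem \ref{Thm3.1} by one more order (choosing $\beta_0=\tfrac{2q-p}{3}$ for the $r\to 1$ case), expand the reciprocal square root, and integrate term by term, with the second-order coefficient in \eqref{3.14} left as the same ``tedious but routine'' computation the paper also only sketches. Your explicit evaluation of $\int_0^1 \log y\,(1-y^2)^{-3/2}\,dy=-\tfrac{\pi}{2}$ and the integrability/dominated-convergence remarks are correct and merely fill in details the paper omits.
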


\begin{proof}
If $q>0$, we choose $\beta_0=\frac{2q-p}{3}>0$ because of the asymptotic expansion (\ref{3.10}). Some tedious computation gives rise to
\begin{equation*}
\begin{split}
     \tilde{I}(p,q,\beta_0,r,z)
    &=(q-p)(1-z^2)\left[1+\left(\frac{(2-q)(q-p)}{16}(1-z^2)\right.\right.\\
    &\quad\quad\quad \left.\left.+\frac{(q-2p)(q+p)}{432}(5-z^2)\right)(r-1)^2+o((r-1)^2)\right],
\end{split}
\end{equation*}
as $r$ tends to $1$. Then we deduce
\begin{align*}
    \Theta(p,q,r) &=\int_{-1}^1\frac{dz}{\sqrt{\tilde{I}(p,q,\beta_0,r,z)}}\\
    &=\int_{-1}^1\frac{\left[1+\left(\frac{(2-q)(q-p)}{16}(1-z^2)+\frac{(q-2p)(q+p)}{432}(5-z^2)\right)(r-1)^2+o((r-1)^2)\right]^{-\frac{1}{2}}}{\sqrt{(q-p)(1-z^2)}}dz\\
    &=\int_{-1}^1\frac{1-\frac{1}{2}\left(\frac{(2-q)(q-p)}{16}(1-z^2)+\frac{(q-2p)(q+p)}{432}(5-z^2)\right)(r-1)^2+o((r-1)^2)}{\sqrt{(q-p)(1-z^2)}}dz\\
    &=\frac{\pi}{\sqrt{q-p}}+\frac{\pi}{96\sqrt{q-p}}(q^2-pq+p^2+3p-3q)(r-1)^2+o\left((r-1)^2\right),
\end{align*}
as $r$ tends to $1$.

If $q>0$, it follows from (\ref{3.12}) that
\begin{align*}
    \Theta(0,q,r) &=\int_{-1}^1\frac{dz}{\sqrt{\tilde{I}(0,q,1,r,z)}}
    =\int_0^1\frac{dy}{\sqrt{1-y^2+\frac{2\log y}{q\log r}+o\left(\frac{1}{\log r}\right)}}\\
    &=\int_0^1\frac{1-\frac{\log y}{q(1-y^2)\log r}+o\left(\frac{1}{\log r}\right)}{\sqrt{1-y^2}}dy
    =\frac{\pi}{2}+\frac{\pi}{2q\log r}+o\left(\frac{1}{\log r}\right),
\end{align*}
as $r$ tends to infinity.

If $-1<p<0$ and $q>-p$, we also have
\begin{align*}
    \Theta(p,q,r) &=\int_{-1}^1\frac{dz}{\sqrt{\tilde{I}(p,q,1,r,z)}}
    =\int_0^1\frac{dy}{\sqrt{1-y^2+\frac{2}{q}(1-y^p)r^p+o\left(r^p\right)}}\\
    &=\int_0^1\frac{1-\frac{1-y^p}{q(1-y^2)}\cdot r^p+o\left(r^p\right)}{\sqrt{1-y^2}}dy
    =\frac{\pi}{2}+\frac{1}{q}\int_0^1\frac{y^p-1}{(1-y^2)^{\frac{3}{2}}}dy\cdot r^p+o(r^p),
\end{align*}
as $r$ tends to infinity.

\end{proof}

\section{Duality}
\label{sec:4}

\begin{thm}\label{Thm4.1}
The following duality relations hold:
\begin{align}
    & \label{4.1}
    \Theta\left(\frac{pq}{p-q},q,r^{\frac{q-p}{q}}\right)=\frac{q-p}{q}\Theta(p,q,r), & p<q,\ q>0,\ r>1,\\
    & \label{4.2}
    \Theta\left(p,\frac{pq}{q-p},r^{\frac{p-q}{p}}\right)=\frac{p-q}{p}\Theta(p,q,r), & p<q,\ p<0,\ r>1,\\
    & \label{4.3}
    \Theta(-q,-p,r)=\Theta(p,q,r), & p<q,\ r>1.
\end{align}
\end{thm}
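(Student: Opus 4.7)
The plan is to prove (\ref{4.1}) and (\ref{4.3}) by direct changes of variables in the integral representation of $\Theta$, and to deduce (\ref{4.2}) from these by composition.

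For (\ref{4.1}), I would substitute $x = y^{(q-p)/q}$ in the integrand defining $\Theta(pq/(p-q), q, r^{(q-p)/q})$. This substitution maps the integration interval onto $[1,r]$, turns the exponent $x^{pq/(p-q)}$ into $y^{-p}$, and sends $x^2$ into $y^{2(q-p)/q}$. A short computation with the identities $(r^{*})^{p^{*}} = r^{-p}$ and $(r^{*})^{q} = r^{q-p}$ (where $p^{*} = pq/(p-q)$, $r^{*} = r^{(q-p)/q}$) shows that the two coefficients inside the bracket of the $(p^{*},q)$-integrand are exactly the swap of those appearing in $\Theta(p,q,r)$. After pulling $y^2$ out of the square-root term and collecting the constant Jacobian factor $(q-p)/q$, one recovers the right-hand side of (\ref{4.1}).

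For (\ref{4.3}), which is polar duality, I would substitute
\[
y = r F(x)^{-1/q},\qquad F(x) = \frac{r^p - r^q}{r^p - 1} + \frac{r^q - 1}{r^p - 1}\, x^p,
\]
in $\Theta(-q,-p,r)$. Writing out the bracket $\Phi(y)$ of the $(-q,-p)$-integrand, clearing negative exponents of $r$, and substituting $y^{-q} = F(x)/r^q$, one verifies the striking simplification $\Phi(y) = r^{-p} x^p$, so that $\Phi(y)^{-2/p} = r^2/x^2$ and $y^2 = r^2/F(x)^{2/q}$, whence $\sqrt{\Phi(y)^{-2/p} - y^2} = r\sqrt{F^{2/q}-x^2}/(xF^{1/q})$. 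Tracking $dy$ yields
\[
\Theta(-q,-p,r) = \int_1^r \frac{xF'(x)}{qF(x)\sqrt{F(x)^{2/q}-x^2}}\,dx.
\]
The difference with $\Theta(p,q,r)$ is $\int_1^r \big(xF'/(qF) - 1\big)/\sqrt{F^{2/q}-x^2}\,dx$, and a routine differentiation verifies the key identity
\[
\frac{d}{dx}\arctan\!\left(\frac{\sqrt{F(x)^{2/q}-x^2}}{x}\right) = \frac{xF'(x)/(qF(x)) - 1}{\sqrt{F(x)^{2/q}-x^2}}.
\]
Since $F(x)^{2/q} - x^2$ vanishes at both $x = 1$ and $x = r$, this antiderivative contributes $0$ at both endpoints, so the difference is zero and (\ref{4.3}) follows.

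Finally, (\ref{4.2}) is obtained by composing (\ref{4.3}), (\ref{4.1}), and (\ref{4.3}) in order: starting from $(p,q,r)$, the three transformations send the parameters to $(-q,-p,r) \to (pq/(p-q),-p,r^{(p-q)/p}) \to (p,pq/(q-p),r^{(p-q)/p})$, the hypothesis $p < 0$ ensuring $-p > 0$ so that (\ref{4.1}) can be applied to the transformed pair; the accumulated scaling is $1 \cdot (p-q)/p \cdot 1 = (p-q)/p$, exactly as required. The main obstacle is the algebraic check behind (\ref{4.3}): the substitution $y = rF^{-1/q}$ does \emph{not} render the two integrands pointwise equal, and one must recognize the discrepancy as the exact derivative of an arctangent. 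This algebraic recognition is the geometric imprint of the identity $\phi - \theta = \arctan(u_\theta/u)$ along the curve, which vanishes precisely at the extrema $u_\pm$ of the support function.
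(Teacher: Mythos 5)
Your proposal is correct, and it reaches all three identities by a genuinely different route for (\ref{4.1}) and (\ref{4.3}). The paper works at the level of the solution of the ODE: for (\ref{4.1}) it sets $w(\tau)=u(\theta)^{\beta}$ with $\beta=\frac{q-p}{q}$, $\tau=\beta\theta$, and checks that a rescaling of $w$ solves (\ref{2.1}) with $p$ replaced by $\frac{pq}{p-q}$; for (\ref{4.3}) it constructs the polar-dual support function $\tilde u(\eta)=\max\frac{\cos(\tilde\theta-\eta)}{u(\tilde\theta)}$ and verifies that $\tilde u$ solves the $(-q,-p)$-equation with the same ratio $r$. You instead manipulate the integral representation (\ref{2.5}) directly: the substitution $x=y^{(q-p)/q}$ for (\ref{4.1}) (I checked that the coefficient swap $(r^{*})^{p^{*}}=r^{-p}$, $(r^{*})^{q}=r^{q-p}$ and the cancellation of the non-constant part $y^{-p/q}$ of the Jacobian against the factor extracted from the square root both work out — your phrase ``constant Jacobian factor'' is slightly loose on this point), and the substitution $y=rF(x)^{-1/q}$ for (\ref{4.3}), where the identities $\Phi(y)=r^{-p}x^{p}$ and $\frac{d}{dx}\arctan\bigl(\sqrt{F^{2/q}-x^2}/x\bigr)=\bigl(\frac{xF'}{qF}-1\bigr)/\sqrt{F^{2/q}-x^2}$ are both correct and the boundary terms vanish since $F(1)^{2/q}=1$ and $F(r)^{2/q}=r^2$. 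The derivation of (\ref{4.2}) by composing (\ref{4.3}), (\ref{4.1}), (\ref{4.3}) is exactly the paper's route and the bookkeeping of parameters and scaling factors is right. What the paper's method buys is geometric meaning (the polar duality is literally Theorem \ref{Thm4.2} and generalizes to higher dimensions); what yours buys is a self-contained calculus verification that avoids the analysis of where the maximum in (\ref{4.6}) is attained. Two small points you should add for completeness: the degenerate cases $p=0$ and $q=0$ (where (\ref{2.5}) is replaced by the logarithmic formulas (\ref{2.6})--(\ref{2.7}) and your $F$ is not defined as written) need either an analogous direct computation or an appeal to the continuity of $\Theta$ in $(p,q)$ from Theorem \ref{Thm3.1}; and since the arctan antiderivative argument is applied to an improper integral, one should note that the integrand's endpoint singularities are of order $(x-1)^{-1/2}$ and $(r-x)^{-1/2}$, so the fundamental theorem of calculus applies in the limiting sense. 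Neither is a substantive obstacle.
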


\begin{proof}
Inspired by the observation of Ben Andrews \cite{Ben03} for the case where $q=2$, we prove the duality (\ref{4.1}) as follows. Since the case where $p=0$ in (\ref{4.1}) is trivial, we may assume $p\neq0$.

Denote $\Theta=\Theta(p,q,r)$, and let $u:[0,\Theta]\to\mathbb{R}$ be the monotone solution of (\ref{2.1}) with $u_{\theta}(0)=u_{\theta}(\Theta)=0$ and $u(\Theta)=r u(0)$. Let $w:[0,\beta\Theta]\to\mathbb{R}$ be defined by $w(\tau)=u(\theta)^\beta$, where $\beta=\frac{q-p}{q},\ \tau=\beta\theta$.

Since $u$ satisfies the following equations
\begin{align*}
     u^{1-p}(u_{\theta}^2+u^2)^{\frac{q-2}{2}}(u_{\theta\theta}+u)=1,\quad
     u_{\theta}^2+u^2=\left(\frac{q}{p}u^p+E\right)^\frac{2}{q},
\end{align*}
where $E$ is given by (\ref{2.3}), then we have
\begin{align*}
    w_{\tau\tau}+w 
    &=\frac{\beta-1}{\beta}u^{\beta-2}u_{\theta}^2+\frac{1}{\beta}u^{\beta-1}u_{\theta\theta}+w\\
    &=\frac{\beta-1}{\beta}w^{\frac{\beta-2}{\beta}}\left[\left(\frac{q}{p}w^{\frac{p}{\beta}}+E\right)^{\frac{2}{q}}-w^{\frac{2}{\beta}}\right]
    +\frac{1}{\beta} w^{\frac{\beta-1}{\beta}}\left[w^{\frac{p-1}{\beta}}\left(\frac{q}{p}w^{\frac{p}{\beta}}+E\right)^{\frac{2}{q}-1}-w^{\frac{1}{\beta}}\right]+w\\
    &=\left[\frac{\beta-1}{\beta}E w^{\frac{\beta-2}{\beta}}+\left(\frac{\beta-1}{\beta}\frac{q}{p}+\frac{1}{\beta}\right)w^{\frac{\beta+p-2}{\beta}}\right]\left(\frac{q}{p}w^{\frac{p}{\beta}}+E\right)^{\frac{2}{q}-1}\\
    &=\frac{\beta-1}{\beta}E w^{\frac{\beta-2}{\beta}}\left(\frac{q}{p}w^{\frac{p}{\beta}}+E\right)^{\frac{2-q}{q}},
\end{align*}
and
\begin{align*}
    w_{\tau}^2+w^2
    &=u^{2(\beta-1)}\left(u_{\theta}^2+u^2\right)
    =w^{\frac{2(\beta-1)}{\beta}}\left(\frac{q}{p}w^{\frac{p}{\beta}}+E\right)^\frac{2}{q}.
\end{align*}

It follows that
\begin{align}\label{4.4}
    w^{1-\frac{pq}{p-q}}(w_{\tau}^2+w^2)^{\frac{q-2}{2}}(w_{\tau\tau}+w)=\frac{p}{p-q}E.
\end{align}
Note that the left hand side of (\ref{4.4}) is homogeneous of degree $\frac{q^2}{q-p}>0$. Hence scaling $w$ by a constant factor gives a solution of (\ref{2.1}) with $p$ replaced by $\frac{pq}{p-q}$, which satisfies $w_{\tau}(0)=w_{\tau}(\beta\Theta)=0$ and $w(\beta\Theta)=r^{\beta}w(0)$. Therefore, we obtain
\begin{equation}\label{4.5}
    \Theta\left(\frac{pq}{p-q},q,r^{\frac{q-p}{q}}\right)=\frac{q-p}{q}\Theta(p,q,r).
\end{equation}

To show the duality relation (\ref{4.3}), define
\begin{equation}\label{4.6}
    \tilde{u}(\eta)=\max\limits_{\substack{\tilde{\theta}\in[0,\Theta]\\ |\tilde{\theta}-\eta|\le\frac{\pi}{2}}}\frac{\cos(\tilde{\theta}-\eta)}{u(\tilde{\theta})},\quad\quad \eta\in [0,\Theta],
\end{equation}
where $u:[0,\Theta]\to\mathbb{R}$ is given as above. 

First, notice that at the critical point $\theta$ of $\frac{\cos(\tilde{\theta}-\eta)}{u(\tilde{\theta})}$ with $|\theta-\eta|<\frac{\pi}{2}$,
\begin{align*}
    \left(\frac{\cos(\theta-\eta)}{u(\theta)}\right)_{\theta}
    =\frac{-u\sin(\theta-\eta)-u_{\theta}\cos(\theta-\eta)}{u^2}=0,
\end{align*}
then it follows from $u_{\theta\theta}+u>0$ that
\begin{align*}
     \left(\frac{\cos(\theta-\eta)}{u(\theta)}\right)_{\theta\theta}
    =\frac{-(u_{\theta\theta}+u)\cos(\theta-\eta)}{u^2}<0,
\end{align*}
which implies that $\frac{\cos(\theta-\eta)}{u(\theta)}$ can have only maxima as critical points. If $\eta\in(0,\frac{\pi}{2})$, using $u_{\theta}(0)=0$, we have
\begin{align*}
    \left.\left(\frac{\cos(\tilde{\theta}-\eta)}{u(\tilde{\theta})}\right)_{\tilde{\theta}}\right|_{\tilde{\theta}=0}=\frac{\sin(\eta)}{u(0)}>0,
\end{align*}
thus the maximum of $\frac{\cos(\tilde{\theta}-\eta)}{u(\tilde{\theta})}$ is never achieved at $0$. Similarly, the maximum is never achieved at $\Theta$ if $\eta\in(\Theta-\frac{\pi}{2},\Theta)$. Therefore, the maximum of (\ref{4.6}) is only taken at the interior of $[0,\Theta]$ for $\eta\in(0,\Theta)$. Moreover, it is obvious that $ \tilde{u}(0)=\frac{1}{u(0)}$ and $\tilde{u}(\Theta)=\frac{1}{u(\Theta)}.$

Next suppose that $\theta=\theta(\eta)$ attains the maximum of (\ref{4.6}), we have
\begin{align}\label{4.7}
    u_{\theta}=u\tan(\eta-\theta),
\end{align}
so that
\begin{align*}
     u_{\theta}^2+u^2=\frac{1}{\tilde{u}^2},\quad
     u_{\theta}(u_{\theta\theta}+u)\frac{d\theta}{d\eta}=-\frac{\tilde{u}_{\eta}}{\Tilde{u}^3}.
\end{align*}
Since $u_{\theta\theta}+u>0$, differentiating both sides of (\ref{4.7}) yields
\begin{align*}
    \frac{d\theta}{d\eta}=\frac{u_{\theta}^2+u^2}{u(u_{\theta\theta}+u)}>0.
\end{align*}
Thus $\theta=\theta(\eta)$ is a monotone increasing function, and the following relations hold:
\begin{align}\label{4.8}
    \tilde{u}=(u_{\theta}^2+u^2)^{-\frac{1}{2}},\quad
    \frac{\tilde{u}_{\eta}}{\tilde{u}}=-\frac{u_{\theta}}{u},\quad
    \frac{d\eta}{d\theta}=\tilde{u}^2 u(u_{\theta\theta}+u).
\end{align}

On the other hand, fix $\theta_0\in(0,\Theta)$ and let $|\eta-\theta_0|<\frac{\pi}{2}$, we have
\begin{align*}
     \left(\frac{\cos(\eta-\theta_0)}{\tilde{u}(\eta)}\right)_{\eta}
    =\frac{-\tilde{u}\sin(\eta-\theta_0)-\tilde{u}_{\eta}\cos(\eta-\theta_0)}{{\tilde{u}}^2}=0
\end{align*}
if and only if
\begin{align*}
    \tan(\eta-\theta_0)=-\frac{\tilde{u}_{\eta}}{\tilde{u}}=\frac{u_{\theta}}{u}
    =\tan(\eta-\theta(\eta)),
\end{align*}
which implies that $\frac{\cos(\eta-\theta_0)}{\tilde{u}(\eta)}$ attains its critical value when $\theta(\eta)=\theta_0+k\pi,\ k\in\mathbb{Z}$.

It follows that 
\begin{equation}\label{4.9}
    \max\limits_{\substack{\tilde{\eta}\in[0,\Theta]\\ |\tilde{\eta}-\theta|\le\frac{\pi}{2}}}\frac{\cos(\tilde{\eta}-\theta)}{\tilde{u}(\tilde{\eta})}
    =\frac{\cos(\eta(\theta)-\theta)}{\tilde{u}(\eta(\theta))}
    =u(\theta).
\end{equation}
The same argument gives
\begin{align}\label{4.10}
    u=(\tilde{u}_{\eta}^2+\tilde{u}^2)^{-\frac{1}{2}},\quad
    \frac{u_{\theta}}{u}=-\frac{\tilde{u}_{\eta}}{\tilde{u}},\quad
    \frac{d\theta}{d\eta}=u^2 \tilde{u}(\tilde{u}_{\eta\eta}+\tilde{u}).
\end{align}

Using (\ref{4.8}) and (\ref{4.10}), we have
\begin{align*}
    (u_{\theta\theta}+u)(\tilde{u}_{\eta\eta}+\tilde{u})=u^{-3}\tilde{u}^{-3},
\end{align*}
then
\begin{align*}
    1 =u^{1-p}(u_{\theta}^2+u^2)^{\frac{q-2}{2}}(u_{\theta\theta}+u)
    =(\tilde{u}_{\eta}^2+\tilde{u}^2)^{-\frac{1-p}{2}}\tilde{u}^{2-q} (\tilde{u}_{\eta\eta}+\tilde{u})^{-1}u^{-3}\tilde{u}^{-3},
\end{align*}
which implies
\begin{align*}
    \tilde{u}^{1+q}(\tilde{u}_{\eta}^2+\tilde{u}^2)^{\frac{-p-2}{2}}(\tilde{u}_{\eta\eta}+\tilde{u})=1.
\end{align*}
Noting that $\frac{\tilde{u}(0)}{\tilde{u}(\Theta)}=\frac{u(\Theta)}{u(0)}=r$, we obtain
\begin{equation}\label{4.11}
    \Theta(-q,-p,r)=\Theta(p,q,r).
\end{equation}

It is apparent from (\ref{4.5}) and (\ref{4.11}) that the duality relation (\ref{4.2}) holds.
\end{proof}

In fact, we have a duality relation of the solution for the $L_p$ dual Minkowski problem with a prescribed function $f=1$ in \cite{CHZ19}.
\begin{thm}[\cite{CHZ19}]\label{Thm4.2}
If the support function $u$ of a smooth, 2-dimensional, strictly convex body $K$ satisfies the Monge-Ampère equation
\begin{equation*}
    u^{1-p}(u^2+|\nabla u|^2)^{\frac{q-2}{2}}\det(\nabla^2 u+u I)=1,
\end{equation*}
then the support function $\tilde{u}$ of the polar body $K^*$ satisfies the following equation
\begin{equation*}
    \tilde{u}^{1+q}(\tilde{u}^2+|\nabla\tilde{u}|^2)^{\frac{-p-2}{2}}\det(\nabla^2 \tilde{u}+\tilde{u}I)=1.
\end{equation*}
\end{thm}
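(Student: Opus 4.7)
The plan is to identify the polar body's support function $\tilde u$ with precisely the function that was constructed via the dual formula (\ref{4.6}) in the proof of Theorem \ref{Thm4.1}, and then invoke the pointwise identities already derived there. Because the statement is planar, all the necessary geometry has essentially been assembled in the previous proof; what remains is to recognize it as polar duality and carry out the substitution.

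First, I would recall the classical Legendre-type correspondence between support and radial functions of a smooth strictly convex body $K\subset\mathbb{R}^2$ containing the origin. With $\nu(\theta)=(\cos\theta,\sin\theta)$, the boundary point of $K$ with outer normal $\nu(\theta)$ is
\begin{equation*}
X(\theta)=u(\theta)\nu(\theta)+u_\theta(\theta)\nu^\perp(\theta),
\end{equation*}
which lies at angular position $\eta$ characterized by $\tan(\eta-\theta)=u_\theta/u$ and at distance $|X(\theta)|=\sqrt{u^2+u_\theta^2}$ from the origin. Since $\tilde u(\eta)=1/\rho_K(\eta)=1/|X(\theta(\eta))|$, this yields the key identification $\tilde u=(u^2+u_\theta^2)^{-1/2}$, which is exactly the function produced by the max construction (\ref{4.6}). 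Strict convexity ($u_{\theta\theta}+u>0$) combined with the origin lying in the interior of $K$ guarantees that $\theta\mapsto\eta$ is a smooth diffeomorphism, so this identification is legitimate.

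Next, differentiating through this change of variables (or simply reading off from (\ref{4.8}) and (\ref{4.10}) in the proof of Theorem \ref{Thm4.1}), I would record the dual identities
\begin{equation*}
\tilde u^2+\tilde u_\eta^2=u^{-2},\qquad (u_{\theta\theta}+u)(\tilde u_{\eta\eta}+\tilde u)=u^{-3}\tilde u^{-3}.
\end{equation*}
Substituting $(u_\theta^2+u^2)^{(q-2)/2}=\tilde u^{2-q}$, $u^{1-p}=(\tilde u^2+\tilde u_\eta^2)^{(p-1)/2}$, and $u_{\theta\theta}+u=u^{-3}\tilde u^{-3}/(\tilde u_{\eta\eta}+\tilde u)$ into the given Monge--Amp\`ere equation, and using $u^{-3}=(\tilde u^2+\tilde u_\eta^2)^{3/2}$ to consolidate, the equation for $u$ collapses after routine simplification to
\begin{equation*}
\tilde u^{1+q}(\tilde u_\eta^2+\tilde u^2)^{-(p+2)/2}(\tilde u_{\eta\eta}+\tilde u)=1,
\end{equation*}
which is the claimed equation (noting that on $\mathbb{S}^1$ one has $\det(\nabla^2\tilde u+\tilde u I)=\tilde u_{\eta\eta}+\tilde u$).

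The main obstacle, though modest, is the bookkeeping of the Legendre-type change of variables: one must verify that $\theta\mapsto\eta$ is a bona fide diffeomorphism (which requires strict convexity and $0\in\mathrm{int}\,K$, so that $K^*$ is itself smooth and strictly convex), and that the identities $\tilde u=(u^2+u_\theta^2)^{-1/2}$ and $(u_{\theta\theta}+u)(\tilde u_{\eta\eta}+\tilde u)=u^{-3}\tilde u^{-3}$ hold globally and with the correct signs. Once this geometric setup is pinned down, the algebraic substitution that converts one equation into the other is entirely mechanical; indeed all the nontrivial identities have already been established in the course of proving the duality (\ref{4.3}).
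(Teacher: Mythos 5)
Your proposal is correct and coincides with the argument the paper itself uses: Theorem \ref{Thm4.2} is quoted from \cite{CHZ19} without a separate proof, but in the planar case it is exactly the computation carried out for the duality relation (\ref{4.3}) in the proof of Theorem \ref{Thm4.1} — the max construction (\ref{4.6}) is precisely $1/\rho_K=h_{K^*}$, and the identities (\ref{4.8}), (\ref{4.10}) give the substitution you describe. Your algebra checks out and the diffeomorphism/positivity caveats you flag are the right ones.
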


By use of the duality relation (\ref{4.3}), we give special values and asymptotic behaviors for the case where $p<0$.

\begin{cor}\label{Cor4.3}
We have
\begin{align}\label{4.12}
    \lim\limits_{r\to \infty}\Theta(p,q,r)=\frac{p}{p-q}\cdot\frac{\pi}{2},\quad\quad\quad p<q\le 0.
\end{align}

When $r$ tends to $1$, we have
\begin{align}\label{4.13}
    & \Theta(p,q,r)=\frac{\pi}{\sqrt{q-p}}+\frac{\pi}{96\sqrt{q-p}}(q^2-pq+p^2+3p-3q)(r-1)^2+o\left((r-1)^2\right),\quad p<0.
\end{align}
\end{cor}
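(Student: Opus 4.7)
The corollary is precisely the kind of statement that the duality (\ref{4.3}) was set up to handle: it extends results already known for $q>0$ (namely (\ref{3.5}) and (\ref{3.14})) to parameters with $p<0$. My plan is to apply $\Theta(-q,-p,r)=\Theta(p,q,r)$ to send the given $(p,q)$ to $(p',q')=(-q,-p)$, check that $(p',q')$ lies in the parameter range covered by the relevant earlier theorem, and then read off the conclusion.

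For (\ref{4.12}), suppose $p<q\le 0$. Then the dual parameters satisfy $0\le p'=-q<q'=-p$, so (\ref{3.5}) of Theorem \ref{Thm3.1} applies and gives
\begin{equation*}
    \lim_{r\to\infty}\Theta(-q,-p,r)=\frac{q'}{q'-p'}\cdot\frac{\pi}{2}=\frac{-p}{-p-(-q)}\cdot\frac{\pi}{2}=\frac{p}{p-q}\cdot\frac{\pi}{2}.
\end{equation*}
Combining this with $\Theta(-q,-p,r)=\Theta(p,q,r)$ yields (\ref{4.12}). (The boundary case $q=0$ works identically: then $p'=0$ and we use (\ref{3.5}) with $p'=0$; both sides equal $\pi/2$.)

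For (\ref{4.13}), suppose $p<0<q$ or $p<q\le 0$, i.e.\ $p<0$ with $p<q$. Then $q'=-p>0$, so (\ref{3.14}) of Theorem \ref{Thm3.2} applies to $\Theta(p',q',r)=\Theta(-q,-p,r)$. The key observation is that the polynomial
\begin{equation*}
    P(p,q):=q^2-pq+p^2+3p-3q
\end{equation*}
is invariant under the involution $(p,q)\mapsto(-q,-p)$, since
\begin{equation*}
    P(-q,-p)=(-p)^2-(-q)(-p)+(-q)^2+3(-q)-3(-p)=p^2-pq+q^2-3q+3p=P(p,q).
\end{equation*}
Also $q'-p'=-p-(-q)=q-p$. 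Therefore (\ref{3.14}) yields
\begin{equation*}
    \Theta(-q,-p,r)=\frac{\pi}{\sqrt{q-p}}+\frac{\pi}{96\sqrt{q-p}}(q^2-pq+p^2+3p-3q)(r-1)^2+o((r-1)^2),
\end{equation*}
and applying duality (\ref{4.3}) gives exactly (\ref{4.13}).

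There is no real obstacle here: the proof is a two-line bookkeeping argument once the duality (\ref{4.3}) and the expansions of Theorems \ref{Thm3.1} and \ref{Thm3.2} are in hand. The only point that requires a brief check is the symmetry of $P(p,q)$ under $(p,q)\mapsto(-q,-p)$, which is immediate.
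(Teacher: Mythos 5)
Your proposal is correct and is exactly the argument the paper intends: Corollary \ref{Cor4.3} is stated immediately after Theorem \ref{Thm4.1} with the remark that it follows ``by use of the duality relation (\ref{4.3})'', i.e.\ by transporting (\ref{3.5}) and (\ref{3.14}) through $(p,q)\mapsto(-q,-p)$, and your verification that $q-p$ and $q^2-pq+p^2+3p-3q$ are invariant under this involution is the only computation needed.
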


\section{Monotonicity in $p$ and $q$}
\label{sec:5}

In this section, we will prove that $\Theta(p,q,r)$ is monotone in $p$ and $q$. More specifically, we have the following theorem.

\begin{thm}\label{Thm5.1}
$\Theta(p,q,r)$ is monotone increasing in $p$ and is monotone decreasing in $q$.
\end{thm}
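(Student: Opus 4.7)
The plan is to rewrite $\Theta(p,q,r)$ in a form where the dependence on $p$ and $q$ becomes transparent via weighted power means. Introduce, for $\alpha \in \mathbb{R}$, the function
\begin{equation*}
\phi_{\alpha}(s) = \left((1-s) + s r^{\alpha}\right)^{1/\alpha} \quad (\alpha \neq 0), \qquad \phi_{0}(s) = r^{s},
\end{equation*}
which is the $(\alpha)$-weighted power mean of $1$ and $r$ with weights $(1-s, s)$. In the formula (\ref{2.5}), apply the substitution $s = (x^{p}-1)/(r^{p}-1)$ (for $p\neq 0$), noting that $B = (r^{q}-1)/(r^{p}-1)$ satisfies $A+B=1$ and $B(r^{p}-1) = r^{q}-1$. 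A direct computation gives $x = \phi_{p}(s)$, $(1-B) + Bx^{p} = 1 + s(r^{q}-1) = \phi_{q}(s)^{q}$, and $dx = \phi_{p}'(s)\,ds$. Handling the cases $p = 0$ (use $s = \log x/\log r$) and $q=0$ analogously in (\ref{2.6}) and (\ref{2.7}), I get the uniform master formula
\begin{equation*}
\Theta(p,q,r) \;=\; \int_{0}^{1} \frac{\phi_{p}'(s)}{\sqrt{\phi_{q}(s)^{2} - \phi_{p}(s)^{2}}}\, ds, \qquad p<q,\ r>1.
\end{equation*}

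With this formula in hand, monotonicity reduces to the classical monotonicity of power means: for fixed $s \in (0,1)$ and $r>1$, the map $\alpha \mapsto \phi_{\alpha}(s)$ is strictly increasing, while $\phi_{\alpha}(0)=1$ and $\phi_{\alpha}(1)=r$ are independent of $\alpha$. Hence $p<q$ implies $\phi_{q}(s) > \phi_{p}(s)$ strictly on $(0,1)$ (ensuring the radicand is positive), and $\partial_{q}\phi_{q}(s) > 0$ strictly on $(0,1)$, vanishing to first order at the endpoints. Also $\phi_{p}'(s) > 0$ for all $p$ since $(r^{p}-1)/p > 0$.

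For monotonicity in $q$, differentiate under the integral sign:
\begin{equation*}
\partial_{q}\Theta(p,q,r) \;=\; -\int_{0}^{1} \phi_{p}'(s)\; \frac{\phi_{q}(s)\, \partial_{q}\phi_{q}(s)}{\bigl(\phi_{q}(s)^{2}-\phi_{p}(s)^{2}\bigr)^{3/2}}\, ds.
\end{equation*}
Every factor in the integrand is strictly positive on $(0,1)$, so $\partial_{q}\Theta < 0$. Monotonicity in $p$ then follows at once from the duality (\ref{4.3}), $\Theta(p,q,r) = \Theta(-q,-p,r)$: differentiating gives $\partial_{p}\Theta(p,q,r) = -(\partial_{2}\Theta)(-q,-p,r) > 0$.

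The main technical point is to justify differentiation under the integral, since the integrand in the master formula has integrable singularities at $s=0$ and $s=1$. A Taylor expansion gives $\phi_{q}(s)^{2}-\phi_{p}(s)^{2} = 2s\bigl(\frac{r^{q}-1}{q}-\frac{r^{p}-1}{p}\bigr) + O(s^{2})$ near $s=0$, and $\partial_{q}\phi_{q}(s) = O(s)$ there (since $\phi_{q}(0)=1$), so the differentiated integrand is of order $s^{-1/2}$ near $s=0$ and analogously near $s=1$. This provides a uniform integrable majorant on a small neighbourhood of $(p,q)$, and standard dominated convergence justifies exchanging the derivative with the integral. This convergence verification is the only step that requires care; everything else is an immediate consequence of the master formula together with the monotonicity of power means.
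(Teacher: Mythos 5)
Your proof is correct, but it takes a genuinely different route from the paper's. The paper works directly with the integrand $I(p,q,r,x)$ in the original variable $x\in(1,r)$: to compare two parameter values it forms an auxiliary function such as $Q(x)=(I(p,q,r,x)+x^2)^{q/2}-(I(\bar p,q,r,x)+x^2)^{q/2}$, checks that it vanishes at $x=1$ and $x=r$, and shows $\frac{\partial}{\partial x}\bigl(x^{1-p}\frac{\partial Q}{\partial x}\bigr)$ has a sign, so that $Q$ admits only maxima (or only minima) as critical points; this forces a pointwise ordering of the integrands. That argument is carried out separately for each sign configuration of $p,\bar p,q$ and for the degenerate cases $p=0$ and $q=0$. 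Your substitution $s=(x^p-1)/(r^p-1)$ repackages the same integral as $\int_0^1 \phi_p'(s)\,(\phi_q(s)^2-\phi_p(s)^2)^{-1/2}ds$ with $\phi_\alpha$ a weighted power mean of $1$ and $r$, and then the pointwise ordering of integrands in $q$ \emph{is} the classical power-mean inequality — no maximum-principle trick and no case analysis needed, and the cases $p=0$, $q=0$ are absorbed into the limit $\phi_0(s)=r^s$. (In fact, for the $q$-monotonicity you do not even need to differentiate under the integral: since the measure $\phi_p'(s)\,ds$ is independent of $q$, the strict pointwise inequality $\phi_q(s)<\phi_{\bar q}(s)$ for $q<\bar q$ already gives $\Theta(p,q,r)>\Theta(p,\bar q,r)$, so your dominated-convergence verification, while correct, is dispensable.) Your use of the duality $\Theta(p,q,r)=\Theta(-q,-p,r)$ to transfer monotonicity from $q$ to $p$ is legitimate and non-circular — the paper itself remarks that one direction can be deduced from the other via (\ref{4.3}), it merely chooses to prove the $p$-direction directly and present the $q$-direction as the corollary. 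What your approach buys is conceptual clarity and economy (the paper's lemma $P(x)<0$ is revealed to be the power-mean inequality in disguise); what the paper's approach buys is self-containedness in the original variable, which is the same framework it reuses for the monotonicity in $r$ in Section 6.
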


\subsection{Monotonicity in $p$}$\ $

To show $\Theta(p,q,r)$ is monotone increasing in $p$, denote
\begin{align*}
    I(p,q,r,x)=\left(\frac{r^p-r^q}{r^p-1}+\frac{r^q-1}{r^p-1}x^p\right)^{\frac{2}{q}}-x^2,
\end{align*}
and observe that if either $p<\bar{p}<0$, or $0<p<\bar{p}$, or $p<0<\bar{p}$ with $q>0$, we have
\begin{align*}
    Q(x)&:=(I(p,q,r,x)+x^2)^{\frac{q}{2}}-(I(\bar{p},q,r,x)+x^2)^{\frac{q}{2}}\\
    &=\frac{r^p-r^q}{r^p-1}+\frac{r^q-1}{r^p-1}x^p-\frac{r^{\bar{p}}-r^q}{r^{\bar{p}}-1}-\frac{r^q-1}{r^{\bar{p}}-1}x^{\bar{p}},
\end{align*}
this is zero for $x=1$ and for $x=r$. A direct computation gives
\begin{align*}
    \frac{\partial}{\partial x}\left(x^{1-p}\frac{\partial Q}{\partial x}\right)
    =-\bar{p}(\bar{p}-p)x^{\bar{p}-p-1}\frac{r^q-1}{r^{\bar{p}}-1}<0,
\end{align*}
from which it follows that $Q(x)$ can have only maxima as critical points in $x$, and therefore that $Q(x)>0$, or equivalently, $I(p,q,r,x)>I(\bar{p},q,r,x)$ for all $x\in (1,r)$. Then
\begin{align*}
    \Theta(p,q,r)-\Theta(\bar{p},q,r)=\int_1^r\left(\frac{1}{\sqrt{I(p,q,r,x)}}-\frac{1}{\sqrt{I(\bar{p},q,r,x)}}\right)dx<0.
\end{align*}

If $q<0$, a similar argument shows that $Q(x)<0$, equivalently, $I(p,q,r,x)>I(\bar{p},q,r,x)$ for all $x\in (1,r)$. Thus $\Theta(p,q,r)$ is also increasing in $p$ for $q<0$.

If $q=0$, denote
\begin{align*}
    Q(x)&:=\frac{\log\left(I(p,0,r,x)+x^2\right)}{\log r}-\frac{\log\left(I(\bar{p},0,r,x)+x^2\right)}{\log r}
    =\frac{2(x^p-1)}{r^p-1}-\frac{2(x^{\bar{p}}-1)}{r^{\bar{p}}-1},
\end{align*}
a similar argument shows that $Q(x)>0$, equivalently, $I(p,q,r,x)>I(\bar{p},q,r,x)$ for all $x\in (1,r)$. Thus $\Theta(p,0,r)$ is increasing in $p$.

\subsection{Monotonicity in $q$}$\ $

Actually, the monotonicity in $q$ of $\Theta(p,q,r)$ can be deduced from the monotonicity in $p$ and the duality (\ref{4.3}). For completeness, we gives a direct proof of the monotonicity in $q$.

If $0<q<\bar{q}$, we have
\begin{align*}
    P(x)&:=(I(p,q,r,x)+x^2)^{\frac{q}{2}}-(I(p,\bar{q},r,x)+x^2)^{\frac{q}{2}}\\
    &=\frac{r^p-r^q}{r^p-1}+\frac{r^q-1}{r^p-1}x^p-\left(\frac{r^{p}-r^{\bar{q}}}{r^{p}-1}+\frac{r^{\bar{q}}-1}{r^{p}-1}x^p\right)^{\frac{q}{\bar{q}}},
\end{align*}
this is zero for $x=1$ and for $x=r$. A direct computation gives
\begin{align*}
    \frac{\partial}{\partial x}\left(x^{1-p}\frac{\partial P}{\partial x}\right)
    =-p^2 x^{p-1}\cdot\frac{q}{\bar{q}}\left(\frac{q}{\bar{q}}-1\right)
    \left(\frac{r^{p}-r^{\bar{q}}}{r^{p}-1}+\frac{r^{\bar{q}}-1}{r^{p}-1}x^p\right)^{\frac{q}{\bar{q}}-2}\left(\frac{r^{\bar{q}}-1}{r^{p}-1}\right)^2 >0,
\end{align*}
from which it follows that $P(x)$ can have only minima as critical points in $x$, and therefore that $P(x)<0$, or equivalently, $I(p,q,r,x)<I(p,\bar{q},r,x)$ for all $x\in (1,r)$. Then
\begin{align*}
    \Theta(p,q,r)-\Theta(p,\bar{q},r)=\int_1^r\left(\frac{1}{\sqrt{I(p,q,r,x)}}-\frac{1}{\sqrt{I(p,\bar{q},r,x)}}\right)dx>0.
\end{align*}
In particular, when $p=0$ we denote $P(x)=(I(0,q,r,x)+x^2)^{\frac{q}{2}}-(I(0,\bar{q},r,x)+x^2)^{\frac{q}{2}}$, then the monotonicity of $\Theta(0,q,r)$ in $q$ follows in a similar manner.

If either $q<\bar{q}<0$ or $q<0<\bar{q}$, a similar argument shows that $P(x)>0$, equivalently, $I(p,q,r,x)<I(p,\bar{q},r,x)$ for all $x\in (1,r)$. Thus $\Theta(p,q,r)$ is also decreasing in $q$.

If either $q<\bar{q}=0$ or $q=0<\bar{q}$, since $\Theta(p,q,r)$ is continuous in $q$, then the monotonicity of $\Theta(p,q,r)$ in $q$ holds in this case.

\section{Monotonicity in $r$}
\label{sec:6}

Due to the asymptotic behavior (\ref{3.10}) when $r$ is close to $1$, we choose the following transformation:
\begin{align}\label{6.1}
    x^{\beta_0}=v=\frac{r^{\beta_0}-1}{2}z+\frac{r^{\beta_0}+1}{2},\quad z\in (-1,1),
\end{align}
where $\beta_0=\frac{2q-p}{3}$. Then we obtain
\begin{align}\label{6.2}
    \Theta(p,q,r)=\int_{-1}^1\frac{dz}{\sqrt{J(p,q,r,z)}},
\end{align}
where $J(p,q,r,z)=\tilde{I}(p,q,r,\beta_0,z)$, i.e.
\begin{equation}\label{6.3}
\begin{split}
    & J(p,q,r,z)=\frac{4\beta_0^2}{(r^{\beta_0}-1)^2}\left[v^{\frac{2(\beta_0-1)}{\beta_0}}\left(\frac{r^p-r^q}{r^p-1}+\frac{r^q-1}{r^p-1}v^{\frac{p}{\beta_0}}\right)^{\frac{2}{q}}-v^2\right],\quad p\neq0,\\
    & J(0,q,r,z)=\frac{4\beta_0^2}{(r^{\beta_0}-1)^2}\left[v^{\frac{2(\beta_0-1)}{\beta_0}}\left(1+\frac{r^q-1}{\beta_0\log r}\log v\right)^{\frac{2}{q}}-v^2\right].
\end{split}
\end{equation}

Using the duality relation (\ref{4.3}), we may assume $q>0$, then $\beta_0>0$. Let $t=\frac{r^{\beta_0}+1}{r^{\beta_0}-1}\in(1,+\infty)$ and $z\in (-1,1)$, define
\begin{align}\label{6.4}
    \tilde{J}(p,q,t,z)=\frac{1}{\beta_0^2} J(p,q,r,z).
\end{align}

Our purpose is to show that $\tilde{J}(p,q,t,z)$ is monotone in $t$ for each $z$.

\subsection{The case $p\neq0$}$\ $

If $p\neq0$, assume $\rho=\frac{p}{q}$. Note that
\begin{align*}
    r=\left(\frac{t+1}{t-1}\right)^{\frac{1}{\beta_0}},\quad v=\frac{t+z}{t-1},
\end{align*}
then we have
\begin{equation}\label{6.5}
    \tilde{J}(p,q,t,z)=(z+t)^2\left[K_{\rho}(t,z)^{\frac{2}{q}}-1\right],
\end{equation}
where
\begin{align*}
    K_{\rho}(t,z)
    &=\frac{\left(\frac{t+1}{t-1}\right)^{\frac{p}{\beta_0}}-\left(\frac{t+1}{t-1}\right)^{\frac{q}{\beta_0}}}{\left(\frac{t+1}{t-1}\right)^{\frac{p}{\beta_0}}-1}\left(\frac{t+z}{t-1}\right)^{\frac{-q}{\beta_0}}
    +\frac{\left(\frac{t+1}{t-1}\right)^{\frac{q}{\beta_0}}-1}{\left(\frac{t+1}{t-1}\right)^{\frac{p}{\beta_0}}-1}\left(\frac{t+z}{t-1}\right)^{\frac{p-q}{\beta_0}}\\
    &=\frac{A^{1-\rho}-B^{1-\rho}}{A^{-\rho}-B^{-\rho}}+\frac{A-B}{A^{\rho}-B^{\rho}},\quad A=\left(\frac{t+1}{t+z}\right)^{\frac{3}{2-\rho}},\  B=\left(\frac{t-1}{t+z}\right)^{\frac{3}{2-\rho}}.
\end{align*}

Note that
\begin{align*}
    \frac{\partial\tilde{J}}{\partial t}
    =(z+t)^2\left[\frac{2}{z+t}\left(K_{\rho}^{\frac{2}{q}}-1\right)+\frac{2}{q}K_{\rho}^{\frac{2}{q}-1}\frac{\partial K_{\rho}}{\partial t}\right]>0
\end{align*}
is equivalent to
\begin{equation}\label{6.6}
    \frac{\partial K_{\rho}}{\partial t}>\frac{q}{z+t}\left(K_{\rho}^{1-\frac{2}{q}}-K_{\rho}\right).
\end{equation}

At first we consider the case where $\rho=-1$. Then
\begin{align*}
    K_{-1}(t,z)=\frac{t^2+2tz+1}{(z+t)^2}.
\end{align*}
A direct computation gives rise to
\begin{align}\label{6.7}
    \frac{\partial K_{-1}}{\partial t}=\frac{2(z^2-1)}{(z+t)^3}=\frac{2}{z+t}\left(1-K_{-1}\right),
\end{align}
which implies
\begin{align*}
    \frac{\partial\tilde{J}}{\partial t}(-2,2,t,z)\equiv0,\quad t>1,\ |z|<1.
\end{align*}
Hence we get
\begin{align*}
    \frac{\partial\Theta}{\partial r}(-2,2,r)
    =-\int_{-1}^1\frac{1}{\beta_0}\frac{\partial t}{\partial r}\frac{\partial \tilde{J}}{\partial t}\frac{dz}{2\tilde{J}^{\frac{3}{2}}}=0,
\end{align*}
and then
\begin{align*}
    \Theta(-2,2,r)\equiv\lim\limits_{r\to 1}\Theta(-2,2,r)=\frac{\pi}{2}.
\end{align*}

Furthermore, we have

\begin{thm}\label{Thm6.1}
For each $t>1$ and $z\in(-1,1)$, then
\begin{align}\label{6.8}
    \frac{\partial\tilde{J}}{\partial t}(-q,q,t,z)
    \left\{\begin{array}{l}
    <0,\quad 0<q<2\\
    >0,\quad q>2. \end{array}\right. 
\end{align}

Moreover, for each $r>1$, then
\begin{align}\label{6.9}
    \frac{\partial\Theta}{\partial r}(-q,q,r)
    \left\{\begin{array}{l}
    <0,\quad 0<q<2\\
    >0,\quad q>2. \end{array}\right. 
\end{align}
\end{thm}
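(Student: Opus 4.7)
The plan is to specialize directly to $\rho := p/q = -1$, which is exactly the case $p = -q$. The main advantage is that the preceding paragraph has already supplied the explicit closed form $K_{-1}(t,z) = (t^2+2tz+1)/(t+z)^2$ together with the identity (\ref{6.7}): $\partial_t K_{-1} = 2(1-K_{-1})/(z+t)$. Observe also that $K_{-1}(t,z) - 1 = (1-z^2)/(t+z)^2 > 0$ for $|z|<1$, so $K_{-1} > 1$ on the entire domain $\{t>1,\,|z|<1\}$.

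I would then differentiate $\tilde J(-q,q,t,z) = (z+t)^2[K_{-1}^{2/q} - 1]$ using the product rule, substitute (\ref{6.7}) for $\partial_t K_{-1}$, and factor out $2(z+t)$. A short calculation yields
\begin{equation*}
\frac{\partial \tilde J}{\partial t}(-q,q,t,z) \;=\; 2(z+t)\, f_q\bigl(K_{-1}(t,z)\bigr), \qquad f_q(K) := \frac{q-2}{q}\, K^{2/q} \;+\; \frac{2}{q}\, K^{2/q-1} \;-\; 1.
\end{equation*}
Since $z+t > 0$, the sign of $\partial_t\tilde J$ is determined entirely by the sign of $f_q$ at $K = K_{-1}(t,z) > 1$. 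This reduces (\ref{6.8}) to a one-variable problem: I would check $f_q(1) = 0$ and compute
\begin{equation*}
f_q'(K) \;=\; \frac{2(q-2)}{q^2}\, K^{2/q-2}(K-1),
\end{equation*}
whose sign on $(1,\infty)$ coincides with that of $q-2$. Hence $f_q(K) > 0$ for all $K>1$ when $q>2$, and $f_q(K) < 0$ for all $K>1$ when $0<q<2$, which is exactly (\ref{6.8}). (As a sanity check, at $q=2$ both coefficients of $f_q$ collapse and $f_2 \equiv 0$, recovering the identity $\partial_t\tilde J(-2,2,t,z)\equiv 0$ displayed just before the theorem.)

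For (\ref{6.9}) I would differentiate under the integral in $\Theta(-q,q,r) = \int_{-1}^1 (\beta_0 \sqrt{\tilde J})^{-1}\,dz$ and use that $t = (r^{\beta_0}+1)/(r^{\beta_0}-1)$ is strictly decreasing in $r$ with $\beta_0 = q > 0$. The two minus signs (one from the chain rule $dt/dr < 0$, one from the $-\tfrac12 \tilde J^{-3/2}$ factor) cancel, so $\operatorname{sign}\partial_r \Theta = \operatorname{sign}\partial_t \tilde J$ and (\ref{6.9}) follows at once from (\ref{6.8}). The whole argument is essentially algebraic and carries no real obstacle once the explicit $K_{-1}$ and (\ref{6.7}) are in hand; the only point requiring care is identifying the correct factorization so that the sign analysis collapses to the clean one-line derivative of $f_q$.
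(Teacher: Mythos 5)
Your proposal is correct and follows essentially the same route as the paper: both reduce the sign of $\partial_t\tilde J(-q,q,t,z)$ to a one-variable inequality in $K_{-1}(t,z)>1$ via the identity (\ref{6.7}), and then transfer the sign to $\partial_r\Theta$ using $\partial t/\partial r<0$. The only difference is presentational --- where the paper invokes ``weighted average inequalities'' to compare $2(1-K)$ with $\frac{q}{1}\bigl(K^{1-2/q}-K\bigr)$, you verify the equivalent statement $f_q(K)\lessgtr 0$ by computing $f_q(1)=0$ and $f_q'(K)=\frac{2(q-2)}{q^2}K^{2/q-2}(K-1)$, which is a perfectly valid (and arguably more self-contained) way to close that step.
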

\begin{proof}
It follows from (\ref{6.7}) and weighted average inequalities that
\begin{align*}
     \frac{\partial K_{-1}}{\partial t}
     =\frac{2}{z+t}\left(1-K_{-1}\right)
     <\frac{q}{z+t}\left(K_{-1}^{1-\frac{2}{q}}-K_{-1}\right),\quad 0<q<2,
\end{align*}
and
\begin{align*}
    \frac{\partial K_{-1}}{\partial t}
    =\frac{2}{z+t}\left(1-K_{-1}\right)
    >\frac{q}{z+t}\left(K_{-1}^{1-\frac{2}{q}}-K_{-1}\right),\quad q>2.
\end{align*}

Using the characterization (\ref{6.6}), we obtain (\ref{6.8}). Thus
\begin{align*}
    \frac{\partial\Theta}{\partial r}(-q,q,r)
    =-\int_{-1}^1\frac{1}{\beta_0}\frac{\partial t}{\partial r}\frac{\partial \tilde{J}}{\partial t}\frac{dz}{2\tilde{J}^{\frac{3}{2}}}
    \left\{\begin{array}{l}
    <0,\quad 0<q<2\\
    >0,\quad q>2. \end{array}\right.
\end{align*}
\end{proof}

The general case will be discussed later.

\subsection{The case $p=0$}$\ $

If $p=0$, we have
\begin{align}\label{6.10}
    \tilde{J}(0,q,t,z)=(z+t)^2\left[L(t,z)^{\frac{2}{q}}-1\right],
\end{align}
where
\begin{align*}
    L(t,z)
    &=\left(\frac{t+z}{t-1}\right)^{-\frac{3}{2}}\left[1+\frac{\left(\frac{t+1}{t-1}\right)^{\frac{3}{2}}-1}{\log\left(\frac{t+1}{t-1}\right)}\log\left(\frac{t+z}{t-1}\right)\right]\\
    &=\frac{B\log A-A\log B}{\log A-\log B},\quad A=\left(\frac{t+1}{t+z}\right)^{\frac{3}{2}},\ B=\left(\frac{t-1}{t+z}\right)^{\frac{3}{2}}.
\end{align*}

Note that
\begin{align*}
    \frac{\partial\tilde{J}}{\partial t}(0,q,t,z)
    =(z+t)^2\left[\frac{2}{z+t}\left(L^{\frac{2}{q}}-1\right)+\frac{2}{q}L^{\frac{2}{q}-1}\frac{\partial L}{\partial t}\right]>0
\end{align*}
is equivalent to
\begin{equation}\label{6.11}
    \frac{\partial L}{\partial t}>\frac{q}{z+t}\left(L^{1-\frac{2}{q}}-L\right).
\end{equation}

\subsection{Monotonicity in $r$}$\ $

The following important lemma was proved by Ben Andrews \cite{Ben03}, which completely solves the case where $q=2$.

\begin{lem}\label{Lem6.2}
If $q=2$, for each $t>1$ and $z\in(-1,1)$, then
\begin{align}\label{6.12}
    \frac{\partial\tilde{J}}{\partial t}(p,2,t,z)
    \left\{\begin{array}{l}
    <0,\quad -2<p\le 0\\
    >0,\quad p<-2. \end{array}\right. 
\end{align}

Moreover, for each $r>1$, then
\begin{align}\label{6.13}
    \frac{\partial\Theta}{\partial r}(p,2,r)\left\{\begin{array}{l}
    <0,\quad -2<p\le 0\\
    >0,\quad p<-2. \end{array}\right. 
\end{align}
\end{lem}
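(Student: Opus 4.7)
The plan is to specialize the criterion (6.6) to $q=2$. With $\rho := p/2$, equation (6.5) gives $\tilde J(p,2,t,z) = (z+t)^2\bigl[K_\rho(t,z)-1\bigr]$, so the sign of $\partial\tilde J/\partial t$ coincides with the sign of
\[
\Phi_\rho(t,z) := \frac{\partial K_\rho}{\partial t}(t,z) - \frac{2}{z+t}\bigl(1 - K_\rho(t,z)\bigr).
\]
The anchor for the whole analysis is (6.7), which furnishes the identity $\Phi_{-1}(t,z)\equiv 0$ (equivalently, $\Theta(-2,2,r)=\pi/2$ for all $r>1$). Thus the lemma reduces to showing that $\Phi_\rho<0$ for $-1<\rho\le 0$ and $\Phi_\rho>0$ for $\rho<-1$, for each fixed $t>1$ and $z\in(-1,1)$; that is, to a strict monotonicity statement in $\rho$ across the critical value $\rho=-1$.

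To establish this, I would write $A = a^{3/(2-\rho)}$, $B = b^{3/(2-\rho)}$ with $a=(t+1)/(t+z)$ and $b=(t-1)/(t+z)$, and regard $K_\rho$ as a sum of two divided differences in $A,B$. Since the exponent $3/(2-\rho)$ enters $A$ and $B$ uniformly, the derivative $\partial/\partial t$ only acts on $a,b$, which simplifies the computation considerably. I would then try to write $\Phi_\rho(t,z)$ as $(\rho+1)$ times a manifestly positive kernel, by subtracting the zero value $\Phi_{-1}$ and invoking the fundamental theorem of calculus in $\rho$. The key technical device is the integral representation of divided differences,
\[
\frac{A^\sigma - B^\sigma}{\sigma} \;=\; \int_{\log B}^{\log A} e^{\sigma s}\,ds,
\]
which recasts $K_\rho$ as a ratio of simple integrals. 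A Chebyshev-type rearrangement inequality applied to these integrals should produce the required single-signed dependence on $\rho+1$.

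Once (6.12) is in hand, the conclusion (6.13) for $\partial\Theta/\partial r$ follows immediately from (6.2) and (6.4):
\[
\frac{\partial \Theta}{\partial r}(p,2,r) \;=\; -\frac{1}{\beta_0}\int_{-1}^{1}\frac{1}{2\tilde J^{3/2}}\,\frac{\partial \tilde J}{\partial t}\,\frac{\partial t}{\partial r}\,dz,
\]
together with the elementary fact $\partial t/\partial r<0$ (since $t=(r^{\beta_0}+1)/(r^{\beta_0}-1)$ is decreasing in $r$). Hence $\operatorname{sign}(\partial\Theta/\partial r)=\operatorname{sign}(\partial \tilde J/\partial t)$, and the two cases of (6.13) match those of (6.12).

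The main obstacle is the sign analysis of $\Phi_\rho$ itself. The parameter $\rho$ enters $K_\rho$ both through the exponent $3/(2-\rho)$ in $A,B$ and through the exponents $-\rho$, $1-\rho$, $\rho$ in the divided differences, so a direct differentiation in $\rho$ produces several logarithmic terms whose joint sign is not transparent. Extracting a clean factor of $\rho+1$ from $\Phi_\rho$ will require the integral representation above combined with a careful rearrangement (or Cauchy–Schwarz) argument in the spirit of Ben Andrews's original proof in \cite{Ben03}; this rearrangement step, rather than the reduction from (6.12) to (6.13), is the substantive part of the argument.
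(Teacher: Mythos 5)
The first thing to note is that the paper does not prove this lemma at all: it is imported verbatim from Ben Andrews's paper \cite{Ben03} (the text preceding the statement says exactly this), so there is no in-paper argument to compare against. Your reduction steps are correct and consistent with the paper's machinery: specializing (6.6) to $q=2$ does show that the sign of $\partial\tilde J/\partial t$ equals the sign of $\Phi_\rho = \partial_t K_\rho - \frac{2}{z+t}(1-K_\rho)$ with $\rho=p/2$, the identity $\Phi_{-1}\equiv 0$ is precisely (6.7), and your derivation of (6.13) from (6.12) via $\partial t/\partial r<0$ is the same computation the paper uses in Theorems 6.1 and 6.3.

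However, there is a genuine gap, and you name it yourself: the entire mathematical content of the lemma is the claim that $\Phi_\rho$ has the sign of $-(\rho+1)$, and you do not prove it. The proposal says a Chebyshev-type rearrangement applied to the divided-difference representation "should produce" the required single-signed dependence on $\rho+1$ and that the extraction of the factor $\rho+1$ "will require" a careful rearrangement argument — this is a strategy, not a proof, and it is exactly the hard step that occupies the bulk of Andrews's argument in \cite{Ben03}. Without carrying it out (or explicitly citing \cite{Ben03} for it, as the paper does), the lemma is not established. A secondary loose end: your case split "$-1<\rho\le 0$" includes $\rho=0$, i.e.\ $p=0$, where $K_\rho$ as written degenerates (the denominator $A^{\rho}-B^{\rho}$ vanishes); that endpoint needs either the separate function $L$ from (6.10)--(6.11) or a limiting argument in $\rho$, neither of which you address.
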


Using the characterizations (\ref{6.6}) and (\ref{6.11}) and Lemma \ref{Lem6.2}, we have
\begin{thm}\label{Thm6.3}
For each $t>1$ and $z\in(-1,1)$, then
\begin{align}\label{6.14}
    \frac{\partial\tilde{J}}{\partial t}(p,q,t,z)
    \left\{\begin{array}{l}
    <0,\quad 0\le-p<q\le 2\\
    >0,\quad -p>q\ge 2. \end{array}\right. 
\end{align}

Moreover, for each $r>1$, then
\begin{align}\label{6.15}
    \frac{\partial\Theta}{\partial r}(p,q,r)
    \left\{\begin{array}{l}
    <0,\quad 0\le-p<q\le 2\\
    >0,\quad -p>q\ge 2. \end{array}\right.
\end{align}
\end{thm}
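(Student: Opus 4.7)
My plan is to reduce each claim to the sign characterizations (\ref{6.6}) (for $p\neq 0$) and (\ref{6.11}) (for $p=0$). Writing $K=K_{\rho}(t,z)$ with $\rho=p/q$, the first region corresponds to proving the reverse of (\ref{6.6}), while the second region corresponds to proving (\ref{6.6}). For each region, the two boundary segments in its closure are already handled: the edge $q=2$ by Lemma \ref{Lem6.2}, and the edge $p=-q$ by Theorem \ref{Thm6.1} (with the corner $(p,q)=(-2,2)$ yielding equality). The remaining edge $p=0$, which only borders the first region, I would settle by repeating the scheme of Theorem \ref{Thm6.1} with $L$ in place of $K_{-1}$: since $L$ arises as the $\rho\to 0$ limit of $K_{\rho}$, a direct computation produces a first-order identity for $\partial_t L$ of the same shape as (\ref{6.7}), and the resulting one-variable inequality reduces once more to the $g(q,y)=qy^{1-2/q}+(2-q)y-2\lessgtr 0$ analysis already carried out in Theorem \ref{Thm6.1}.

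To reach the interior of each region, I would exploit the observation that when $\rho=p/q$ is held fixed, the auxiliary quantities $A,B$ and hence $K_{\rho}$ do not depend on $q$. So along rays $\rho=\mathrm{const}$, only the second term of
$$
\Phi(p,q)\;:=\;(z+t)\,\partial_t K_{\rho}-q\bigl(K_{\rho}^{1-2/q}-K_{\rho}\bigr)
$$
varies with $q$, and $\partial_q\Phi$ can be computed explicitly. Monotonicity of $\Phi$ in $q$ along such rays would then transport the boundary sign (at $q=2$ from Lemma \ref{Lem6.2}, or at the endpoint of $p=-q$ from Theorem \ref{Thm6.1}) into the interior. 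A complementary slicing at fixed $q$ with $p$ varying, propagated from the $q=2$ boundary via Lemma \ref{Lem6.2}, would confirm the picture in the remaining directions and ensure that every interior point is reached by a path along which $\Phi$ keeps its sign.

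The main obstacle is the interior monotonicity itself. The $\rho$-dependence of $K_{\rho}$ is built from two ratios of the form $(X^a-Y^a)/(X^b-Y^b)$, whose derivatives in $\rho$ involve logarithms and are not manifestly signed. I expect the right tool is the integral representation $(X^a-Y^a)/a=\int_Y^X s^{a-1}\,ds$ combined with weighted-mean (Jensen) inequalities, precisely mirroring the one-variable analysis of $g(q,y)$ in the proof of Theorem \ref{Thm6.1}. Once $\partial_t\tilde{J}$ has the correct sign throughout each region, the identity $\partial_r\Theta=-\tfrac{1}{2\beta_0}\int_{-1}^{1}\tilde{J}^{-3/2}\,\partial_t\tilde{J}\,\partial_r t\,dz$, together with $\beta_0>0$ and $\partial_r t<0$, immediately converts this into the monotonicity of $\Theta$ in $r$ stated in (\ref{6.15}).
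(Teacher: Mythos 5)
Your skeleton is the paper's argument: reduce to the characterizations (\ref{6.6}) and (\ref{6.11}), feed in Lemma \ref{Lem6.2} at $q=2$, and propagate along the rays $\rho=p/q=\mathrm{const}$, finishing with the sign of $\partial_r t$ in the integral for $\partial_r\Theta$. But the step you single out as ``the main obstacle'' is not an obstacle at all, and the machinery you propose for it (integral representations of $(X^a-Y^a)/(X^b-Y^b)$, $\rho$-derivatives, a complementary fixed-$q$ slicing) is never needed. Since $K_\rho$ depends only on $\rho$, $t$, $z$, along a ray $\rho=\mathrm{const}$ both $K_\rho$ and $\partial_t K_\rho$ are frozen, so your $\Phi$ varies with $q$ only through the elementary function $h(q)=q\bigl(K_\rho^{1-2/q}-K_\rho\bigr)$, and all that is required is the \emph{two-point} comparison $h(q)\ge h(2)=2(1-K_\rho)$ for $0<q\le 2$ and $h(q)\le h(2)$ for $q\ge 2$. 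This is exactly Bernoulli's inequality $K^{\lambda}\gtrless \lambda K+1-\lambda$ with $\lambda=\frac{q-2}{q}$, i.e.\ the same weighted-average inequality already invoked in Theorem \ref{Thm6.1}; no monotonicity of $h$ in $q$ and no control of $\partial_\rho K_\rho$ is needed. Moreover the rays through the segment $q=2$, $-2<p\le 0$ (resp.\ $p<-2$) already sweep out all of $\{0<-p<q\le 2\}$ (resp.\ $\{-p>q\ge 2\}$), so the fixed-$q$ slicing and the appeal to Theorem \ref{Thm6.1} on the line $p=-q$ (which lies in neither open region) are superfluous.

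One small correction on the $p=0$ edge: there is no exact first-order identity for $\partial_t L$ of the shape (\ref{6.7}) --- that identity is special to $\rho=-1$. What you need, and what the paper uses, is the \emph{inequality} $\partial_t L<\frac{2}{z+t}(1-L)$, obtained from Lemma \ref{Lem6.2} at $(p,q)=(0,2)$ via the characterization (\ref{6.11}) with $q=2$, after which the same Bernoulli comparison extends it to all $0<q\le 2$. With these two adjustments your outline closes and coincides with the paper's proof.
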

\begin{proof}
By substituting $p=2\rho$ into Lemma \ref{Lem6.2}, we obtain
\begin{align*}
    & \frac{\partial K_{\rho}}{\partial t}<\frac{2}{z+t}\left(1-K_{\rho}\right),\quad -1<\rho<0,\\
    & \frac{\partial K_{\rho}}{\partial t}>\frac{2}{z+t}\left(1-K_{\rho}\right),\quad \rho<-1.
\end{align*}

Using weighted average inequalities, we conclude that
\begin{align*}
     & \frac{\partial K_{\rho}}{\partial t}<\frac{q}{z+t}\left(K_{\rho}^{1-\frac{2}{q}}-K_{\rho}\right),\quad -1<\rho<0,\ 0<q\le 2,\\
    & \frac{\partial K_{\rho}}{\partial t}>\frac{q}{z+t}\left(K_{\rho}^{1-\frac{2}{q}}-K_{\rho}\right),\quad \rho<-1,\ q\ge 2,
\end{align*}
and
\begin{align*}
     & \frac{\partial L}{\partial t}<\frac{q}{z+t}\left(L^{1-\frac{2}{q}}-L\right),\quad 0<q\le 2,
\end{align*}
which imply (\ref{6.14}). Thus
\begin{align*}
    \frac{\partial\Theta}{\partial r}(p,q,r)
    =-\int_{-1}^1\frac{1}{\beta_0}\frac{\partial t}{\partial r}\frac{\partial \tilde{J}}{\partial t}\frac{dz}{2\tilde{J}^{\frac{3}{2}}}
    \left\{\begin{array}{l}
    <0,\quad 0\le -p<q\le 2\\
    >0,\quad -p>q\ge 2\end{array}\right.
\end{align*}
for each $r>1$.
\end{proof}

According to Theorem \ref{Thm4.1}, we can generalize the monotonicity results in Theorem \ref{Thm6.1} and Theorem \ref{Thm6.3}.

\begin{thm}\label{Thm6.4}
$\Theta(p,q,r)$ is monotone in $r$ if $(p,q)$ satisfies the conditions as follows.
\begin{enumerate}[(i)]
    \item If $p\le-2,\ q\ge2$ and $(p,q)\neq (-2,2)$, then
    \begin{align*}
     \frac{\partial\Theta}{\partial r}(p,q,r)>0,\quad\forall\ r>1. 
     \end{align*}
     
     \item If $q>p,\ q\ge2$, $p\ge\frac{2q}{2+q}$ and $(p,q)\neq (1,2)$, then
     \begin{align*}
     \frac{\partial\Theta}{\partial r}(p,q,r)>0,\quad\forall\ r>1. 
     \end{align*}
     
     \item If $q>p,\ p\le -2$, $q\le \frac{2p}{2-p}$ and $(p,q)\neq (-2,-1)$, then
     \begin{align*}
     \frac{\partial\Theta}{\partial r}(p,q,r)>0,\quad\forall\ r>1. 
     \end{align*}
     
     \item If $q\ge\frac{2p}{2-p},\ p\ge-2,\ q\le 2$, and $(p,q)\neq (0,0),\ (1,2),\ (-2,-1)$, or $(-2,2)$, then
     \begin{align*}
     \frac{\partial\Theta}{\partial r}(p,q,r)<0,\quad\forall\ r>1. 
     \end{align*}
\end{enumerate}
\end{thm}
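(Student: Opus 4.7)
The plan is to bootstrap the monotonicity results of Theorems~\ref{Thm6.1} and~\ref{Thm6.3} across the whole $(p,q)$-plane via the three duality relations of Theorem~\ref{Thm4.1}. The key observation is that differentiating (\ref{4.1}) with respect to $r$ yields
\begin{equation*}
\frac{\partial\Theta}{\partial r}\left(\tfrac{pq}{p-q},\,q,\,r^{\frac{q-p}{q}}\right)\cdot r^{-p/q}
=\frac{\partial\Theta}{\partial r}(p,q,r),
\end{equation*}
and an entirely analogous identity with prefactor $r^{-q/p}$ holds when (\ref{4.2}) is differentiated; since these prefactors are strictly positive for $r>1$, each of (\ref{4.1}) and (\ref{4.2}) preserves the sign of $\partial_r\Theta$, and the involution (\ref{4.3}) trivially does the same.

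For part (i), Theorem~\ref{Thm6.3} already gives $\partial_r\Theta>0$ on $\{p<-q\le -2\}$. Applying (\ref{4.3}) via $(p,q)\mapsto(-q,-p)$ I obtain $\partial_r\Theta>0$ on $\{q>-p\ge 2\}$. Together with the diagonal $\{p=-q,\ q>2\}$ supplied by Theorem~\ref{Thm6.1}, this exhausts $\{p\le -2,\ q\ge 2\}\setminus\{(-2,2)\}$.

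For part (ii), I apply (\ref{4.1}) to part (i). With $q>0$ fixed, the map $p\mapsto p^{*}:=\frac{pq}{p-q}$ has derivative $\frac{-q^{2}}{(p-q)^{2}}<0$, and satisfies $p=\frac{2q}{2+q}\Longleftrightarrow p^{*}=-2$; hence $q\ge 2$ combined with $p\ge\frac{2q}{2+q}$ forces $(p^{*},q)$ into the region of part (i), and sign preservation yields $\partial_r\Theta(p,q,r)>0$. Part (iii) is entirely analogous via (\ref{4.2}): with $p<0$ fixed, $q\mapsto q^{*}:=\frac{pq}{q-p}$ is strictly decreasing, and $q=\frac{2p}{2-p}\Longleftrightarrow q^{*}=2$, so $(p,q^{*})$ lands in part (i).

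For part (iv), combining Theorems~\ref{Thm6.1} and~\ref{Thm6.3} together with (\ref{4.3}) first establishes $\partial_r\Theta<0$ on the base rectangle $\{-2\le p\le 0,\ 0\le q\le 2,\ p<q\}\setminus\{(-2,2)\}$. Applying (\ref{4.1}) to this rectangle sends, for each fixed $q\in(0,2]$, the interval $p\in[-2,0]$ bijectively onto $p^{*}\in[0,\tfrac{2q}{2+q}]$; this is precisely the $p\ge 0$ slice of part (iv). Applying (\ref{4.2}) sends, for each fixed $p\in[-2,0)$, the interval $q\in[0,2]$ onto $q^{*}\in[\tfrac{2p}{2-p},0]$, providing the $q\le 0$ slice. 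The union of these three pieces covers all of part (iv), and the four excluded points $(0,0),(1,2),(-2,-1),(-2,2)$ each trace back, under the appropriate duality, to the single degenerate point $(-2,2)$, where $\Theta\equiv\pi/2$ by (\ref{3.8}). The main obstacle is this last piece of bookkeeping: verifying that the three sub-regions obtained from the base rectangle, from (\ref{4.1}), and from (\ref{4.2}) cleanly tile the prescribed part-(iv) domain, and that each exclusion indeed corresponds to the genuine exception $(-2,2)$.
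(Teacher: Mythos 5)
Your argument is correct and takes essentially the same route as the paper's proof: both start from the base monotonicity regions supplied by Theorems \ref{Thm6.1} and \ref{Thm6.3} (together with the reflection (\ref{4.3})) and then transport the sign of $\frac{\partial\Theta}{\partial r}$ through the dualities (\ref{4.1}) and (\ref{4.2}), whose differentiated forms carry the positive prefactors $r^{-p/q}$ and $r^{-q/p}$. The only cosmetic slip is in the final bookkeeping: $(0,0)$ is excluded simply because it lies on the boundary $p=q$ of the domain where $\Theta$ is defined, not because it is dual to $(-2,2)$, but this does not affect the proof.
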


\begin{proof}
Due to Theorem \ref{Thm6.3}, it follows that
\begin{align*}
    \frac{\partial\Theta}{\partial r}(p,q,r)
    <0,\quad 0\le-p<q\le 2.
\end{align*}
Using the duality relation (\ref{4.3}), we have
\begin{align*}
    & \frac{\partial\Theta}{\partial r}(p,q,r)
    =\frac{\partial\Theta}{\partial r}(-q,-p,r)
    <0,\quad 0\le q<-p\le 2.
\end{align*}
Combining Theorem \ref{Thm6.1}, thus
\begin{align*}
     \frac{\partial\Theta}{\partial r}(p,q,r)
    <0,\quad -2\le p\le 0\le q\le 2,\ (p,q)\neq(0,0),(-2,2).
\end{align*}

By the duality relations (\ref{4.1}) and (\ref{4.2}), we have
\begin{align*}
     \frac{\partial\Theta}{\partial r}(p,q,r)
    =r^{-\frac{p}{q}}\frac{\partial\Theta}{\partial r}\left(\frac{pq}{p-q},q,r^{\frac{q-p}{q}}\right)
    <0,\quad 0<p<1,\ \frac{2p}{2-p}\le q\le 2,
\end{align*}
and
\begin{align*}
     \frac{\partial\Theta}{\partial r}(p,q,r)
    =r^{-\frac{q}{p}}\frac{\partial\Theta}{\partial r}\left(p,\frac{pq}{q-p},r^{\frac{p-q}{p}}\right)
    <0,\quad -1<q<0,\ \frac{2q}{2+q}\ge p\ge -2.
\end{align*}
This completes the proof of the case (iv). The proof of the rest parts follows in a similar manner.
\end{proof}

\begin{figure}[htbp]
\centering
\begin{tikzpicture}[scale=0.8][>=Stealth] 
\filldraw[opacity=0.6, draw=white, fill=blue!!100] (-2,0)--(0,0)--(0,2)--(-2,2)--cycle;
\filldraw[opacity=0.6, draw=white, fill=blue!!100, domain=0:1] plot(\x,{2*\x/(2-\x)})--(0,2)--cycle;
\filldraw[opacity=0.6, draw=white, fill=blue!!100, domain=-2:0] plot(\x,{2*\x/(2-\x)})--(-2,0)--cycle;
\filldraw[opacity=0.8, draw=white, fill=red!!100] (-2,2)--(-2,8)--(-8,8)--(-8,2)--cycle;
\filldraw[opacity=0.8, draw=white, fill=red!!100] (2,2)--(8,8)--(2,8)--cycle;
\filldraw[opacity=0.8, draw=white, fill=red!!100] (-2,-2)--(-8,-2)--(-8,-4)--(-4,-4)--cycle;
\filldraw[opacity=0.8, draw=white, fill=red!!100, domain=1:1.6] plot(\x,{2*\x/(2-\x)})--(2,8)--(2,2)--cycle;
\filldraw[opacity=0.8, draw=white, fill=red!!100, domain=-8:-2] plot(\x,{2*\x/(2-\x)})--(-2,-2)--(-8,-2)--cycle;
\filldraw[opacity=0.8, fill=black!80] (-2,2) circle (0.1);
\filldraw[opacity=0.8, fill=black!80] (1,2) circle (0.1);
\filldraw[opacity=0.8, fill=black!80] (-2,-1) circle (0.1);

\draw[->](-8,0)--(8.3,0) node[right]{$p$-axis};
\draw[->](0,-4)--(0,8.3) node[above]{$q$-axis};
\foreach \x in {-7, -6, -5, -4, -3, -2, -1, 1, 2, 3, 4, 5, 6, 7} \draw (\x, 1pt) -- (\x, -1pt) node[anchor=north] {$\x$};
\foreach \y in {-3, -2, -1, 1, 2, 3, 4, 5, 6, 7} \draw (1pt, \y) -- (-1pt, \y) node[anchor=east] {$\y$};
\node[below right] at (0,0) {$0$};
\draw [densely dashed] (-8,2)--(8,2);
\draw [densely dashed] (-8,1)--(8,1);
\draw [densely dashed] (-2,-4)--(-2,8);
\draw [densely dashed] (-1,-4)--(-1,8);
\draw [densely dashed] (2,-4)--(2,8);
\draw [densely dashed] (-8,-2)--(8,-2);
\draw [densely dashed] (-8,8)--(4,-4);
\node[above right] at (-7,7) {$q=-p$};
\draw [densely dashed] (-4,-4)--(8,8);
\node[above right] at (7.5,7) {$q=p$};
\draw [densely dashed] (-8,-4)--(4,8);
\node[above right] at (3.5,7) {$q-p=4$};
\draw [densely dashed] [domain = -8: 1.6] plot ({\x},{2*\x/(2-\x)});
\node[above ] at (1,6) {$q=\frac{2p}{2-p}$};
\draw[thick, rotate=-135] (0,-1.414) ellipse (2.449 and 1.414);
\node[above] at (-6.5,4.5) {Case (i)};
\node[above] at (3.5,4.5) {Case (ii)};
\node[above] at (-6.5,-3) {Case (iii)};
\node[above] at (-1,1) {Case (iv)};
\end{tikzpicture}
\caption{Domains where $\Theta(p,q,r)$ is monotone in $r$\\
Blue domain: $\frac{\partial\Theta}{\partial r}<0$. Red domain: $\frac{\partial\Theta}{\partial r}>0.$}
\label{fig-2}
\end{figure}
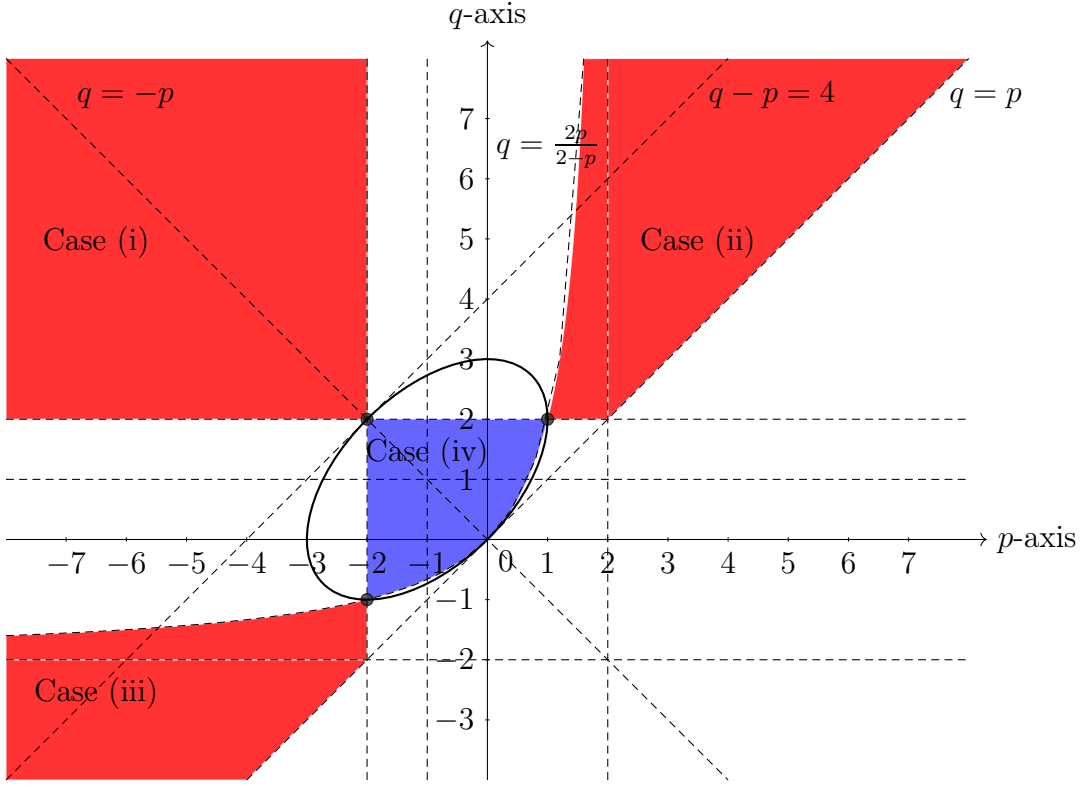

\begin{rem}
Consider the ellipse $\Phi$ defined by
\begin{equation}\label{6.16}
    p^2-pq+q^2+3p-3q=0.
\end{equation}

Recall that $\Theta(p,q,r)$ has the asymptotic expansions (\ref{3.14}) and (\ref{4.13}) as $r$ tends to $1$, we deduce that if $\Theta(p,q,r)$ monotonically increases in $r$ for any $r>1$, then $(p,q)$ must lie inside $\Phi$; if $\Theta(p,q,r)$ monotonically decreases in $r$ for any $r>1$, then $(p,q)$ must lie outside $\Phi$. This observation is consistent with the results in Theorem \ref{Thm6.4}, see Figure \ref{fig-2}.

On the other hand, according to the asymptotic behaviors in Theorem \ref{Thm3.2}, it is clear that there exist many $(p,q)$ such that $\Theta(p,q,r)$ is not monotone in $r$ for any $r>1$. For instance, when $p=0$ and $q>3$, the function $\Theta(p,q,r)$ is monotonically increasing in $r$ as $r$ tends to $1$, and monotonically decreasing in $r$ as $r$ tends to infinity.
\end{rem}

\section{Proofs of Theorem \ref{Thm1.2} and Theorem \ref{Thm1.3}}
\label{sec:7}

Recall that an embedded solution has total curvature $2\pi$, thus the period $\Theta(p,q,r)$ of a non-constant embedded solution must equal $\frac{\pi}{k}$ for some integer $k$.

We divide our proof of Theorem \ref{Thm1.2} in five situations.

\subsection{Monotone case}$\ $

First we study the embedded solutions of Eq.(\ref{1.2}) for the case where $\Theta(p,q,r)$ is monotone in $r$. According to Theorem \ref{Thm3.1}, Corollary \ref{Cor4.3} and Theorem \ref{Thm6.4}, it follows that if $(p,q)$ satisfies the conditions in Theorem \ref{Thm6.4}, the values of $\Theta(p,q,r)$ lie between 
\begin{align}\label{7.1}
    \lim\limits_{r\to 1}\Theta(p,q,r)=\frac{\pi}{\sqrt{q-p}},
\end{align}
and
\begin{align}\label{7.2}
    \lim\limits_{r\to \infty}\Theta(p,q,r)
=\left\{\begin{array}{l}
\frac{q}{q-p}\frac{\pi}{2},\quad \text{if}\ 0\le p<q\\
\frac{\pi}{2},\quad\quad\ \text{if}\ p<0<q\\ 
\frac{p}{p-q}\frac{\pi}{2},\quad \text{if}\ p<q\le 0
\end{array}\right.
=\frac{\pi}{\Xi(p,q)}.
\end{align}

If $p\le-2,\ q\ge2$ and $(p,q)\neq (-2,2)$, it follows from (\ref{7.1}), (\ref{7.2}) and the case (i) in Theorem \ref{Thm6.4} that $\Theta(p,q,r)$ monotonically increases in $r$ from $\frac{\pi}{\sqrt{q-p}}$ to $\frac{\pi}{2}$. If in addition $(k-1)^2<q-p\le k^2$ for some integer $k\ge3$, then $\Theta(p,q,r)$ equals $\frac{\pi}{m}$ for some $r>1$ if and only if 
$$2<m<\sqrt{q-p}.$$
Thus there exist exactly $(k-2)$ embedded solutions of Eq.(1.1), including the constant solution. Moreover, it is known that all solutions of Eq.(1.1) with $(p,q)=(-2,2)$ are given by
\begin{align}\label{7.3}
    u(\theta)=\sqrt{\lambda^2\cos^2(\theta-\theta_0)+\lambda^{-2}\sin^2(\theta-\theta_0)},
\end{align}
for $\lambda>0$ and $\theta_0\in[0,2\pi)$.

If $q>p,\ q\ge2$ and $p\ge\frac{2q}{2+q}$, it follows from (\ref{7.1}), (\ref{7.2}) and the case (ii) in Theorem \ref{Thm6.4} that $\Theta(p,q,r)$ monotonically increases in $r$ from $\frac{\pi}{\sqrt{q-p}}$ to $\frac{q}{q-p}\frac{\pi}{2}$. If in addition $(k-1)^2<q-p\le k^2$ for some integer $k$, then $\Theta(p,q,r)$ equals $\frac{\pi}{m}$ for some $r>1$ if and only if 
$$\frac{2(q-p)}{q}<m<\sqrt{q-p}.$$
Thus there exist exactly $(k-1)$ embedded solutions when $2p\le q$, and there exist exactly $k$ embedded solutions when $2p> q$. A similar argument holds for the case $q>p,\ p\le -2$ and $q\le \frac{2p}{2-p}$.

If $q\ge\frac{2p}{2-p},\ p\ge-2,\ q\le 2$ and $(p,q)\neq (0,0)$, it follows from (\ref{7.1}), (\ref{7.2}) and the case (iv) in Theorem \ref{Thm6.4} that $\Theta(p,q,r)$ monotonically decreases in $r$ from $\frac{\pi}{\sqrt{q-p}}$ to $\frac{\pi}{\Xi(p,q)}$. We now consider four subcases.
\begin{enumerate}[(i)]
    \item If in addition $0<q-p< 1$, then $\Theta(p,q,r)$ equals $\frac{\pi}{m}$ for some $r>1$ if and only if 
    $$\sqrt{q-p}<m<\Xi(p,q).$$
    Thus there exists a unique embedded solution when $2p\ge q$ or $p\ge 2q$, and there exist exactly two embedded solutions when $2p<q$ and $p<2q$. 
    
    \item If in addition $1\le q-p\le 4$ and $(p,q)\neq(1,2),\ (-2,-1),\ (-2,2)$, we have
    \begin{align*}
        \frac{\pi}{2}\le \frac{\pi}{\Xi(p,q)}<\Theta(p,q,r)
        <\frac{\pi}{\sqrt{q-p}}\le\pi.
    \end{align*}
    Then $\Theta(p,q,r)$ never equals $\frac{\pi}{m}$ for any integer $m$. Hence the only embedded solution is the constant solution. 
    
    \item If $(p,q)=(1,2)$, a direct computation shows that all embedded solutions of Eq.(\ref{1.2}) with $(p,q)=(1,2)$ are given by
    \begin{align}\label{7.4}
        u(\theta)=1+\lambda\cos(\theta-\theta_0),
    \end{align}
    for $\lambda\ge0$ and $\theta_0\in[0,2\pi)$.
    
    \item If $(p,q)=(-2,-1)$, using the duality relation in Theorem \ref{Thm4.2}, we see that the embedded solution of Eq.(\ref{1.2}) with $(p,q)=(-2,-1)$ can be derived from the positive embedded solution of Eq.(\ref{1.2}) with $(p,q)=(1,2)$. That is, all embedded solutions of Eq.(\ref{1.2}) with $(p,q)=(-2,-1)$ are given by
    \begin{align}\label{7.5}
        u(\theta)=\frac{\sqrt{1-\mu^2\sin^2(\theta-\theta_0)}-\mu\cos(\theta-\theta_0)}{1-\mu^2},
    \end{align}
    for $0\le\mu<1$ and $\theta_0\in[0,2\pi)$.
\end{enumerate}

To summarize what we have proved, we obtain
\begin{lem}\label{Lem7.1}
The classification of the embedded solutions of Eq.(\ref{1.2}) for the monotone case is as follows:
\begin{enumerate}
    \item[Case (2)] \quad $0<q-p\le 1$.
    \begin{enumerate}
        \item[Subcase $1^\circ$] If $(p,q)=(1,2)$, then each embedded solution has the form 
        \begin{align*}
        u(\theta)=1+\lambda\cos(\theta-\theta_0),
        \end{align*}
        for $\lambda\ge0$ and $\theta_0\in[0,2\pi)$.
        
        \item[Subcase $2^\circ$] If $(p,q)=(-2,-1)$, then each embedded solution has the form 
        \begin{align*}
        u(\theta)=\frac{\sqrt{1-\mu^2\sin^2(\theta-\theta_0)}-\mu\cos(\theta-\theta_0)}{1-\mu^2},
        \end{align*}
        for $0\le\mu<1$ and $\theta_0\in[0,2\pi)$.
 
        \item[Subcase $3^\circ$] If $q-p=1$ and $(p,q)\neq (1,2),\ (-2,-1)$, then the embedded solution is unique. 
        
        \item[Subcase $4.1^\circ$] If $q-p<1$, $q\le 2p$ and $q\ge2$, then the embedded solution is unique. 
        
        \item[Subcase $4.2^\circ$] If $q-p<1$, $q\le 2p$ and $p\le\frac{2q}{2+q}\ (0<q<2)$, then the embedded solution is unique. 
        
        \item[Subcase $5.1^\circ$] If $q-p<1,\ q>2p,\ 2q\le p$ and $p\le -2$, then the embedded solution is unique.
        
        \item[Subcase $5.2^\circ$] If $q-p<1,\ q>2p,\ 2q\le p$ and $q\ge\frac{2p}{2-p}\ (-2<p<0)$, then the embedded solution is unique. 
        
        \item[Subcase $6^\circ$] If $q-p<1,\ q>2p$ and $2q>p$, then there exist exactly two embedded solutions.
        
        $\ $
    \end{enumerate}
    
    \item[Case (3)] \quad $1<q-p\le 4$.
    \begin{enumerate}
        \item[Subcase $1^\circ$] If $(p,q)=(-2,2)$, then each embedded solution has the form 
        $$u(\theta)=\sqrt{\lambda^2\cos^2(\theta-\theta_0)+\lambda^{-2}\sin^2(\theta-\theta_0)},$$
        for $\lambda>0$ and $\theta_0\in[0,2\pi)$.
        
        \item[Subcase $2.1^\circ$] If $p\ge-2$, $q\le2$ and $1<q-p\le 3$, then the embedded solution is unique.
        
        \item[Subcase $2.2^\circ$] If $q\ge 2p,\ 2q\ge p,\ 1<q-p\le 3$ and $p\ge\frac{2q}{2+q}\ (q>2)$, then the embedded solution is unique.
        
        \item[Subcase $2.3^\circ$] If $q\ge 2p,\ 2q\ge p,\ 1<q-p\le 3$ and $q\ge\frac{2p}{2-p}\ (p<-2)$, then the embedded solution is unique.
        
        \item[Subcase $3.1^\circ$] If $p\ge-2$, $q\le2,\ 3<q-p\le 4$ and $(p,q)\neq(-2,2)$, then the embedded solution is unique.
        
        \item[Subcase $3.2^\circ$] If $q\ge 2p,\ 2q\ge p,\ 3<q-p\le 4,\ q>2$ and $p\ge\frac{2q}{2+q}$, then the embedded solution is unique.
        
        \item[Subcase $3.4^\circ$] If $q\ge 2p,\ 2q\ge p,\ 3<q-p\le 4,\ p<-2$ and $q\le \frac{2p}{2-p}$, then the embedded solution is unique.
        
        \item[Subcase $4^\circ$] If $q\ge 2p$ and $2q< p$, then there exist exactly two embedded solutions.
        
        \item[Subcase $5^\circ$] If $q<2p$, then there exist exactly two embedded solutions.

        $\ $
    \end{enumerate}

    \item[Case ($4$)] \quad $(k-1)^2<q-p\le k^2,\quad k\ge3$.
    \begin{enumerate}
        \item[Subcase $1^\circ$] If $q\ge 2$ and $p\le -2$, then there exist exactly $(k-2)$ embedded solutions.
        
        \item[Subcase $4^\circ$] If $q\ge 2,\ p\ge \frac{2q}{2+q},\ p>-2$ and $q\ge 2p$, then there exist exactly $(k-1)$ embedded solutions.
        
        \item[Subcase $5^\circ$] If $q\ge 2,\ p\ge \frac{2q}{2+q},\ p>-2$ and $q<2p$, then there exist exactly $k$ embedded solutions.
        
        \item[Subcase $8^\circ$] If $p\le -2,\ q\le \frac{2p}{2-p},\ q< 2$ and $2q\ge p$, then there exist exactly $(k-1)$ embedded solutions.
        
        \item[Subcase $9^\circ$] If $p\le -2,\ q\le \frac{2p}{2-p},\ q<2$ and $2q<p$, then there exist exactly $k$ embedded solutions.
        
    \end{enumerate}
\end{enumerate}
\end{lem}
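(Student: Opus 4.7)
The plan is to reduce the entire classification to a counting argument on the integers $m$ for which the equation $\Theta(p,q,r)=\pi/m$ has a solution $r>1$. Since an embedded solution has total curvature $2\pi$ and $k$-fold symmetry forces the period to be $\pi/k$, the non-constant embedded solutions of Eq.\,(\ref{1.2}) are in bijection with pairs $(m,r)$ such that $m\in\mathbb{Z}_{\ge2}$ and $\Theta(p,q,r)=\pi/m$ (together with the constant solution coming from $r=1$). Thus everything reduces to determining the image of $r\mapsto\Theta(p,q,r)$.

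First I would combine the limits from Theorem \ref{Thm3.1} and Corollary \ref{Cor4.3} with the monotonicity statements in Theorem \ref{Thm6.4}. Under the hypotheses of Theorem \ref{Thm6.4}(i)--(iii), $\Theta(p,q,r)$ strictly increases from $\pi/\sqrt{q-p}$ to $\pi/\Xi(p,q)$, while under (iv) it strictly decreases from $\pi/\sqrt{q-p}$ to $\pi/\Xi(p,q)$. In either case the image is the open interval with endpoints $\pi/\sqrt{q-p}$ and $\pi/\Xi(p,q)$. Hence the number of non-constant embedded solutions equals the number of integers $m\ge 2$ lying strictly between $\Xi(p,q)$ and $\sqrt{q-p}$. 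Adding the constant solution (always present) gives the final count.

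Next I would execute the case analysis. In the strictly increasing regime (cases (i)--(iii) of Theorem \ref{Thm6.4}) we have $\pi/\sqrt{q-p}<\pi/\Xi(p,q)$, i.e.\ $\Xi(p,q)<\sqrt{q-p}$; writing $(k-1)^2<q-p\le k^2$, the integers strictly between $\Xi(p,q)$ and $\sqrt{q-p}$ are counted by evaluating $\Xi(p,q)$: when $\Xi=2$ one gets $k-2$ solutions (case (i), $q\ge 2,p\le -2$); when $\Xi=2(q-p)/q$ one gets $k-1$ if $q\ge 2p$ and $k$ if $q<2p$, matching Subcases (4)$4^\circ$, (4)$5^\circ$, (3) $4^\circ$, (3) $5^\circ$, (2) $4^\circ$, (2) $5^\circ$ and (2) $6^\circ$; and symmetrically for $\Xi=2(p-q)/p$ via the duality (\ref{4.3}). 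In the strictly decreasing regime (case (iv)), $\pi/\sqrt{q-p}>\pi/\Xi(p,q)$, i.e.\ $\sqrt{q-p}<\Xi(p,q)\le 2$, so the admissible integers $m\ge 2$ are exactly those with $\sqrt{q-p}<m<\Xi(p,q)$: this is empty when $q-p\ge 1$ (giving uniqueness in the interior of Case (3)) and contains only $m=2$ when $q-p<1$ together with $\Xi(p,q)>2$, i.e.\ $2p<q$ and $p<2q$ (giving exactly two solutions, Subcase (2) $6^\circ$).

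The remaining issue is the three special points $(1,2)$, $(-2,-1)$, $(-2,2)$ lying on the boundary of the monotone domain where the image of $\Theta$ is just a single value. For $(p,q)=(-2,2)$, Eq.\,(\ref{1.2}) is the affine isoperimetric equation whose solutions are the ellipses (\ref{7.3}); this is a direct computation. For $(p,q)=(1,2)$, Eq.\,(\ref{1.2}) becomes $u_{\theta\theta}+u=1$, a linear ODE whose general positive solution is exactly (\ref{7.4}). For $(p,q)=(-2,-1)$, I would invoke the polarity duality of Theorem \ref{Thm4.2} with $(p,q)\leftrightarrow(-q,-p)$: the positive embedded solutions at $(1,2)$ (circles of radius $1$ with arbitrary center) are precisely the support functions whose polars have radial function $1/u$, and computing the support function of a translated unit disk gives formula (\ref{7.5}). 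The hard part of the whole argument is really a bookkeeping one: checking that in each of the many subcases listed in the lemma, the precise value of $\Xi(p,q)$ (which depends on the sign pattern of $p$ and $q$) combined with $\lfloor\sqrt{q-p}\rfloor$ produces the stated count, and that the boundary cases $q-p=k^2$, $q=2p$ and $2q=p$ are allocated to the correct subcase. This is carried out by testing each subcase against the trichotomy $\Xi\in\{2,\,2(q-p)/q,\,2(p-q)/p\}$ dictated by the definition of $\Xi(p,q)$ and the monotonicity region in Theorem \ref{Thm6.4}.
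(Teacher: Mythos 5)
Your overall strategy is exactly the paper's: combine the limits $\lim_{r\to1}\Theta=\pi/\sqrt{q-p}$ and $\lim_{r\to\infty}\Theta=\pi/\Xi(p,q)$ with the monotonicity of Theorem \ref{Thm6.4} to read off which values $\pi/m$ are attained, and treat $(1,2)$, $(-2,-1)$, $(-2,2)$ by direct computation and polarity. However, there is a genuine error in your bookkeeping of the admissible integers $m$.

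You assert that non-constant embedded solutions correspond to $m\in\mathbb{Z}_{\ge 2}$. This is false: an embedded closed convex curve may have a support function with a single maximum per period (e.g.\ $u=1+\lambda\cos\theta$ at $(p,q)=(1,2)$), in which case the arc between consecutive critical points has total curvature $\pi$, i.e.\ $m=1$. The paper counts all integers $m\ge 1$ strictly between $\Xi(p,q)$ and $\sqrt{q-p}$. Your restriction is moreover inconsistent with your own count in the increasing regime, where you correctly obtain $k$ solutions when $q<2p$ --- that extra solution is precisely the $m=1$ one, since then $\Xi=2(q-p)/q<1$ and the admissible integers are $1,\dots,k-1$.

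The error becomes fatal in the decreasing regime. For Subcase (2) $6^\circ$ you claim the second solution comes from $m=2$ under the condition $\Xi(p,q)>2$, ``i.e.\ $2p<q$ and $p<2q$.'' But $\Xi(p,q)\le 2$ for \emph{all} $(p,q)$ (one checks $2(q-p)/q=2-2p/q\le2$ for $0\le p<q$ and $2(p-q)/p=2-2q/p\le2$ for $p<q\le0$), so your condition is vacuous and your argument would yield only the constant solution there, contradicting the statement. The correct condition is $\sqrt{q-p}<1<\Xi(p,q)$, i.e.\ $q-p<1$ together with $\Xi>1$, and it is $\Xi>1$ (not $\Xi>2$) that is equivalent to $q>2p$ and $2q>p$; the non-constant solution is the $m=1$ curve. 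The same repair is needed for Case (3) Subcases $4^\circ$ and $5^\circ$, where again the second solution corresponds to $m=1$ via $\Xi<1$. With the threshold corrected to $m\ge1$ throughout, your case analysis coincides with the paper's proof.
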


$\ $

\subsection{Case (1): $q-p\le 0$}$\ $

If $q-p\le0$, applying the strong maximum principle, we obtain the uniqueness of solutions of Eq.(\ref{1.2}). (see, for example \cite{HZ18, CL21})

\begin{lem}\label{Lem7.2}
In the case (1), we have
\begin{enumerate}
    \item[Case (1)] \quad $q-p\le 0$.
    \begin{enumerate}
        \item[Subcase $1^\circ$] If $q<p$, then the embedded solution is unique.
        \item[Subcase $2^\circ$] If $q=p$, then the embedded solution is unique up to scaling.
    \end{enumerate}
    \end{enumerate}
\end{lem}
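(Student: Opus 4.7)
The plan is to give a short self-contained proof via the strong maximum principle applied to the ratio of two solutions, exploiting the homogeneity of Eq.~(\ref{1.2}). Let $u$ and $v$ be two positive smooth solutions, and put $w := u/v$. Since $w$ is smooth on $\mathbb{S}^1$, it attains its maximum at some $\theta_0$, where $w_\theta(\theta_0)=0$ and $w_{\theta\theta}(\theta_0)\le 0$. Rewriting these conditions in terms of $u$ and $v$ gives, at $\theta_0$,
\begin{equation*}
u = w v,\qquad u_\theta = w v_\theta,\qquad u_{\theta\theta}\le w v_{\theta\theta},
\end{equation*}
and consequently $u_\theta^2+u^2 = w^2(v_\theta^2+v^2)$ and $u_{\theta\theta}+u\le w(v_{\theta\theta}+v)$. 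Substituting into Eq.~(\ref{1.2}) for $u$ and using the same equation for $v$ yields the key pointwise inequality
\begin{equation*}
1 = u^{1-p}(u_\theta^2+u^2)^{\frac{q-2}{2}}(u_{\theta\theta}+u)\le w(\theta_0)^{q-p}\,v^{1-p}(v_\theta^2+v^2)^{\frac{q-2}{2}}(v_{\theta\theta}+v) = w(\theta_0)^{q-p},
\end{equation*}
where the exponent arises from $(1-p)+(q-2)+1=q-p$.

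For Subcase $1^\circ$, the assumption $q<p$ gives $q-p<0$, so $w(\theta_0)^{q-p}\ge 1$ forces $w(\theta_0)\le 1$, i.e. $u\le v$ on $\mathbb{S}^1$. Swapping the roles of $u$ and $v$ yields the reverse inequality, so $u\equiv v$.

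For Subcase $2^\circ$, the inequality $w(\theta_0)^{q-p}\ge 1$ is vacuous, and I instead use the scale-invariance of Eq.~(\ref{1.2}) when $p=q$: replacing $u$ by $\lambda u$ multiplies the left-hand side by $\lambda^{q-p}=1$. Set $\lambda_0:=\max_{\mathbb{S}^1}(u/v)=w(\theta_0)$ and $\tilde v:=\lambda_0 v$; by scale-invariance $\tilde v$ is also a solution of Eq.~(\ref{1.2}), with $u\le \tilde v$ on $\mathbb{S}^1$, equality attained at $\theta_0$, and $u_\theta(\theta_0)=\tilde v_\theta(\theta_0)$ (from $w_\theta(\theta_0)=0$). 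Since Eq.~(\ref{1.2}) can be solved for $u_{\theta\theta}$ as a smooth function of $(u,u_\theta)$ on $\{u>0\}$, standard uniqueness for the initial-value problem of a second-order ODE forces $u\equiv \tilde v = \lambda_0 v$, the asserted uniqueness up to scaling.

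The only substantive step is the pointwise comparison at the maximum of $w$, which crucially relies on the $w^{q-p}$ homogeneity of the left-hand side of Eq.~(\ref{1.2}); beyond this, the argument is essentially bookkeeping, so no serious obstacle is anticipated.
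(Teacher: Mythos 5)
Your proposal is correct and follows essentially the same route as the paper, which simply invokes the strong maximum principle (citing \cite{HZ18, CL21}) without writing out the details; your comparison of two solutions at the maximum of $w=u/v$, giving $1\le w(\theta_0)^{q-p}$, is exactly the standard argument being referenced, and your ODE-uniqueness step for the scale-invariant case $p=q$ is a valid way to close that subcase.
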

    
$\ $

\subsection{Case (2): $0<q-p\le 1$}$\ $

If $0<q-p\le 1$, we only need to show that there exists a unique solution of Eq.(\ref{1.2}) if $q>p>\frac{2q}{2+q}\ (0<q<2)$. Note that the case where $p<q<\frac{2p}{2-p}\ (-2<p<0)$ can be proved in the same way.

According to Theorem \ref{Thm5.1} and Theorem \ref{Thm6.3}, we get
$$\Theta(p,q,r)>\Theta(p,2,r)\ge\lim\limits_{r\to 1}\Theta(p,2,r)=\frac{\pi}{\sqrt{2-p}}\ge\pi,\quad\quad 1\le p<2,$$
and
$$\Theta(p,q,r)>\Theta(p,2p,r)>\lim\limits_{r\to\infty}\Theta(p,2p,r)=\pi,\quad\quad 0<p<1.$$
Thus the only embedded solution is the constant solution.

\begin{lem}\label{Lem7.3}
In the Case (2), we have
\begin{enumerate}
    \item[Case (2)] \quad $0<q-p\le 1$.
    \begin{enumerate}
        \item[Subcase $4.3^\circ$] If $q-p<1,\ q\le 2p$ and $p>\frac{2q}{2+q}\ (0<q<2)$, then the embedded solution is unique. 
        
        \item[Subcase $5.3^\circ$] If $q-p<1,\ q>2p,\ 2q\le p$ and $q<\frac{2p}{2-p}\ (-2<p<0)$, then the embedded solution is unique. 
    \end{enumerate}
\end{enumerate}
\end{lem}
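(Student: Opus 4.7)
The strategy is to rule out all non-constant embedded solutions by establishing the strict inequality $\Theta(p,q,r) > \pi$ for every $r > 1$ under the hypotheses of either subcase. A non-constant embedded solution corresponds to a closed convex curve of total curvature $2\pi$ whose support function oscillates with some integer number $k \ge 1$ of maxima, so its period $\Theta(p,q,r)$ must equal $\pi/k$. Hence $\Theta(p,q,r) > \pi$ immediately forces $k$ not to exist, leaving only the constant solution.

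For Subcase $4.3^\circ$, I first observe that the hypothesis $p > \frac{2q}{2+q}$ with $0 < q < 2$ automatically gives $p > 0$ and in fact $p > q/2$ (since $\frac{2q}{2+q} > \frac{q}{2}$ iff $q < 2$), so the condition $q \le 2p$ is built into the other assumptions. I then split on whether $p \ge 1$ or $0 < p < 1$. If $1 \le p < 2$, strict monotonicity in $q$ from Theorem \ref{Thm5.1} gives $\Theta(p,q,r) > \Theta(p,2,r)$ because $q < 2$; the boundary case $p = 1$ is handled by the explicit identity $\Theta(1,2,r) = \pi$ of Theorem \ref{Thm3.1}, while for $1 < p < 2$ the pair $(p,2)$ lies in the monotone-increasing-in-$r$ region (case (ii) of Theorem \ref{Thm6.4}), whence $\Theta(p,2,r) \ge \lim_{r\to 1}\Theta(p,2,r) = \pi/\sqrt{2-p} > \pi$. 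If instead $0 < p < 1$, monotonicity in $q$ combined with $q \le 2p$ gives $\Theta(p,q,r) \ge \Theta(p,2p,r)$; the pair $(p,2p)$ falls into case (iv) of Theorem \ref{Thm6.4}, since $p < 1$ ensures both $2p \ge \frac{2p}{2-p}$ and $2p \le 2$, so $\Theta(p,2p,r)$ is strictly decreasing in $r$ and therefore exceeds $\lim_{r \to \infty}\Theta(p,2p,r) = \frac{2p}{2p-p}\cdot\frac{\pi}{2} = \pi$, using formula \eqref{3.5}. In every case $\Theta(p,q,r) > \pi$.

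For Subcase $5.3^\circ$, I invoke the duality relation \eqref{4.3}, $\Theta(p,q,r) = \Theta(-q,-p,r)$, from Theorem \ref{Thm4.1}. The substitution $(p,q) \mapsto (-q,-p)$ maps the conditions $q - p < 1$, $q > 2p$, $2q \le p$, $q < \frac{2p}{2-p}$, $-2 < p < 0$ bijectively onto the conditions of Subcase $4.3^\circ$ for the new pair, so its conclusion transfers to the desired region with no further work.

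The main technical obstacle is keeping the chain of comparisons strict at the transition points $p = 1$, $q = 2p$, or near the special pair $(1,2)$, where one of the two monotonicities becomes trivial. This is overcome cleanly by using the exact evaluation $\Theta(1,2,r) = \pi$ from \eqref{3.6} together with the fact that strict monotonicity in $q$ from Theorem \ref{Thm5.1} applies whenever $q < 2$ or $q < 2p$, so the bound $\Theta(p,q,r) > \pi$ is never lost.
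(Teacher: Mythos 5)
Your proof is correct and follows essentially the same route as the paper: reduce to showing $\Theta(p,q,r)>\pi$ via $\Theta(p,q,r)>\Theta(p,2,r)\ge\pi/\sqrt{2-p}\ge\pi$ for $1\le p<2$ and $\Theta(p,q,r)\ge\Theta(p,2p,r)>\lim_{r\to\infty}\Theta(p,2p,r)=\pi$ for $0<p<1$, using the monotonicity in $q$ and the monotonicity in $r$ from Theorem \ref{Thm6.4}, then transfer Subcase $5.3^\circ$ by the duality \eqref{4.3}. Your treatment is if anything slightly more careful than the paper's at the boundary points $p=1$ and $q=2p$.
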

Combining Lemma \ref{Lem7.1}, this completes the proof of Case (2).

$\ $

\subsection{Case (3): $1<q-p\le 4$}$\ $

If $1<q-p\le 3$, we only need to consider the case where $q>2$ and $p<\frac{2q}{2+q}$ because of the duality relation (\ref{4.3}).

Let $q-p=3$, we will estimate upper bounds of
\begin{align*}
    I(p,q,r,x)=\left\{\begin{array}{l}
\left(1+\frac{r^q-1}{r^p-1}(x^p-1)\right)^{\frac{2}{q}}-x^2,\quad p\neq0\\
\left(1+\frac{r^q-1}{\log r}\log x\right)^{\frac{2}{q}}-x^2,\quad\quad p=0
\end{array}\right.
\end{align*}
for each $x\in(1,r)$, then we can give a lower bound of
\begin{align*}
    \Theta(p,q,r)=\int_{1}^r \frac{dx}{\sqrt{I(p,q,r,x)}}.
\end{align*}

\begin{lem}\label{Lem7.4}
Suppose $q-p=3,\ -1<p<2$ and $q>\frac{2p}{2-p}$. If $r>1$ satisfies 
\begin{align}\label{7.6}
    \frac{p(r^q-1)}{q(r^p-1)}\le r^2,
\end{align}
then we have
\begin{align}\label{7.7}
    I(p,q,r,x)< J_1(r,x)=r^2+1-\frac{r^2}{x^2}-x^2,\quad\forall\ x\in(1,r).
\end{align}

On the other hand, if $r>1$ satisfies 
\begin{align}\label{7.8}
    \frac{p(r^q-1)}{q(r^p-1)}> r^2,
\end{align}
then we have
\begin{align}\label{7.9}
    I(p,q,r,x)< J_2(r,x)=r^2-2r+2x-x^2,\quad\forall\ x\in(1,r).
\end{align}
\end{lem}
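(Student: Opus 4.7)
The plan is to reduce each inequality to a comparison of auxiliary functions. Set
\[
A(x) = \left(1 + \frac{r^q-1}{r^p-1}(x^p-1)\right)^{2/q}, \quad B(x) = r^2 + 1 - \frac{r^2}{x^2}, \quad C(x) = r^2 - 2r + 2x,
\]
so that $I = A - x^2$, $J_1 = B - x^2$, $J_2 = C - x^2$, and (7.7), (7.9) become $A<B$ and $A<C$ on $(1,r)$. One checks $A(1)=B(1)=1$, $A(r)=B(r)=C(r)=r^2$, $C(1)=(r-1)^2+1>1=A(1)$, and, writing $D := 2p(r^q-1)/(q(r^p-1))$, the endpoint slopes are $A'(1)=D$, $B'(1)=2r^2$, $A'(r)=D/r^2$, $C'(r)=2$ (using $q-p=3$). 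Thus (7.6) is exactly $A'(1)\le B'(1)$, and (7.8) is exactly $A'(r)>C'(r)$.

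For Case 2, set $\Psi = C-A$, so that $\Psi(1)=(r-1)^2>0$, $\Psi(r)=0$, and (7.8) yields $\Psi'(1),\Psi'(r)<0$. The strategy is to show that every interior critical point of $\Psi$ is a strict local minimum; this rules out any interior zero of $\Psi'$, since otherwise the sign pattern $(-,+,-)$ forced by the endpoint values would require a local maximum somewhere. Consequently $\Psi$ is strictly decreasing on $[1,r]$, whence $\Psi>0$ on $[1,r)$. Differentiating $A' = Dx^{p-1}A^{1-q/2}$ gives $A'' = (p-1)A'/x + (1-q/2)(A')^2/A$, so at a critical point ($A'(x)=2$),
\[
\Psi''(x) \;=\; \frac{2(p+1)}{A(x)} - \frac{2(p-1)}{x},
\]
using $q-2=p+1$. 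For $-1<p\le 1$ this is manifestly positive since $(p-1)\le 0<(p+1)$. For $1<p<2$, the first integral $A^{q/2}=1+(qD/(2p))(x^p-1)$ together with $A^{q/2-1}=Dx^{p-1}/2$ at a critical point gives $A(x) = 2/(Dx^{p-1}) + q(x-x^{1-p})/p$; substituting, the required inequality $(p+1)x>(p-1)A$ reduces, after multiplying by $pDx^{p-1}>0$, to
\[
Dx^p(3-p) + (p-1)(qD - 2p) \;>\; 0,
\]
both summands being positive by (7.8) (which gives $D>2r^2>2\ge 2p/q$).

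For Case 1, set $\Phi = B-A$, so $\Phi(1)=\Phi(r)=0$ and $\Phi'(1)\ge 0$ by (7.6). The parallel plan is to show that every interior critical point of $\Phi$ is a strict local maximum; combined with a check that $\Phi'(r)\le 0$ (equivalently $D\ge 2r$, verified from the lemma's hypotheses), this forces $\Phi'$ to have at most one interior zero and $\Phi>0$ on $(1,r)$. At a critical point of $\Phi$ (where $A'=B'=2r^2/x^3$), the same formula for $A''$ yields $\Phi'' = -2(p+2)r^2/x^4 + 2(p+1)r^4/(x^6 A)$, so the desired negativity reduces to $(p+2)x^2 A > (p+1)r^2$. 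Substituting the explicit expression for $A$ at a critical point (obtained from the first integral together with $A^{1-q/2} = 2r^2/(Dx^{p+2})$) converts this into an algebraic inequality in $D,x,r,p$, to be verified under (7.6), $q=p+3$, and $q>2p/(2-p)$. The case $p=0$ is handled identically after replacing the first integral by its logarithmic form $A^{q/2} = 1 + (r^q-1)\log x/\log r$.

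The main obstacle is Case 1 for $-1<p<0$, where the algebraic inequality at a critical point of $\Phi$ contains terms of competing signs and the hypothesis $q>2p/(2-p)$ must be invoked explicitly to close the estimate; by contrast, for $p\ge 0$ the two terms are naturally of the right sign. A secondary technical point is justifying $D\ge 2r$ under (7.6) and the lemma's other hypotheses, which controls the boundary behavior of $\Phi$ at $x=r$.
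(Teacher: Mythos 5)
Your treatment of the second case ((\ref{7.8}) $\Rightarrow$ (\ref{7.9})) is correct and complete: your endpoint computations ($\Psi(1)=(r-1)^2$, $\Psi(r)=0$, $\Psi'(1),\Psi'(r)<0$ under (\ref{7.8})), the formula $A''=(p-1)A'/x+(1-q/2)(A')^2/A$, and the reduction at a critical point to $(3-p)Dx^p+(p-1)(qD-2p)>0$ all check out, and the sign-pattern argument excluding interior zeros of $\Psi'$ is sound. This is a genuinely different route from the paper, which instead introduces $Q_2(x)=W(x)-\bigl(\tfrac{p(r^q-1)}{q(r^p-1)}x^{p-1}\bigr)^{q/(q-2)}$ (with $W=A^{q/2}$), observes $P_2'>0\iff Q_2>0$, and shows $Q_2(r)<0$ with $Q_2'>0$, hence $Q_2<0$ and $P_2'<0$ throughout; both arguments land on the same conclusion that $J_2-I$ is strictly decreasing.

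The first case ((\ref{7.6}) $\Rightarrow$ (\ref{7.7})), however, is not proved. Everything hinges on showing that every interior critical point of $\Phi=B-A$ satisfies $\Phi''<0$, i.e.\ $(p+2)x^2A>(p+1)r^2$ there, and you leave this as ``an algebraic inequality in $D,x,r,p$, to be verified,'' explicitly calling it the main obstacle. This is exactly the step where (\ref{7.6}) and $q>\frac{2p}{2-p}$ must do the work, and it is not routine: substituting the critical-point value $A=\frac{2r^2}{Dx^{p+2}}+\frac{qr^2}{p}(x^{-2}-x^{-p-2})$ reduces the claim to $(p+2)\bigl[\frac{2}{Dx^{p}}+\frac{q}{p}(1-x^{-p})\bigr]>p+1$, whose first term near $x=1$ is only $\frac{2(p+2)}{D}$, and (\ref{7.6}) merely bounds $D\le 2r^2$ --- far too weak when $r$ is large. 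So you would additionally have to show that critical points cannot sit too close to $x=1$, which you have not done. The subsidiary assertion $\Phi'(r)\le0$, i.e.\ $D\ge 2r$, is likewise stated as ``verified from the lemma's hypotheses'' with no argument (it is true, but for $p<0$ it does not follow from the obvious monotonicity/Chebyshev considerations and needs its own proof). For comparison, the paper closes this case by a different mechanism: it sets $Q_1(x)=W(x)-\bigl(\tfrac{p(r^q-1)}{q(r^p-1)r^2}x^{p+2}\bigr)^{q/(q-2)}$, so that $P_1'>0\iff Q_1>0$, checks $Q_1(1)\ge0$ (this is precisely where (\ref{7.6}) enters), $Q_1(r)<0$, and $Q_1''<0$ by an explicit two-term computation in which each term has a definite sign on the admissible range of $p$; concavity then forces $Q_1$ to change sign exactly once, so $P_1$ rises and then falls between its two zeros. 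Your plan lacks any analogue of that concavity step, so as it stands the first half of the lemma is a genuine gap.
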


\begin{proof}
First, assume $-1<p<0$ and $r>1$ satisfies (\ref{7.6}). Let
\begin{align*}
    P_1(x)=J_1(r,x)-I(p,q,r,x)=r^2+1-\frac{r^2}{x^2}-\left(1+\frac{r^q-1}{r^p-1}(x^p-1)\right)^{\frac{2}{q}},
\end{align*}
we have $P_1(1)=P_1(r)=0$ and
\begin{align*}
    P_1'(x)=\frac{2r^2}{x^3}-\frac{2p(r^q-1)}{q(r^p-1)}x^{p-1}\left(1+\frac{r^q-1}{r^p-1}(x^p-1)\right)^{\frac{2}{q}-1}.
\end{align*}
Then $P_1'(x)>0$ if and only if
\begin{align*}
    Q_1(x)=1+\frac{r^q-1}{r^p-1}(x^p-1)-\left(\frac{p(r^q-1)}{q(r^p-1)r^2}x^{p+2}\right)^{\frac{q}{q-2}}>0.
\end{align*}

It follows from (\ref{7.6}) that $Q_1(1)\ge 0$. A direct computation yields $Q_1(r)<0$ and
\begin{align*}
    Q_1''(x)=\frac{r^q-1}{r^p-1}\cdot p(p-1)x^{p-2}-\left(\frac{p(r^q-1)}{q(r^p-1)r^2}\right)^{\frac{q}{q-2}}\frac{(p+2)q}{q-2}\left(\frac{(p+2)q}{q-2}-1\right)x^{\frac{(p+2)q}{q-2}-2} <0.
\end{align*}
Thus $P_1'(x)$ changes from positive to negative as $x$ increases from $1$ to $r$, from which it follows that $P_1(x)>0$ for all $x\in (1,r)$. This completes the proof of (\ref{7.7}) for $-1<p<0$. 

Next, assume $-1<p<0$ and $r>1$ satisfies (\ref{7.8}). Let
\begin{align*}
    P_2(x)=J_2(r,x)-I(p,q,r,x)=r^2-2r+2x-\left(1+\frac{r^q-1}{r^p-1}(x^p-1)\right)^{\frac{2}{q}},
\end{align*}
we have $P_2(1)=(r-1)^2>0,\ P_2(r)=0$ and
\begin{align*}
    P_2'(x)=2-\frac{2p(r^q-1)}{q(r^p-1)}x^{p-1}\left(1+\frac{r^q-1}{r^p-1}(x^p-1)\right)^{\frac{2}{q}-1}.
\end{align*}
Then $P_2'(x)>0$ if and only if
\begin{align*}
    Q_2(x)=1+\frac{r^q-1}{r^p-1}(x^p-1)-\left(\frac{p(r^q-1)}{q(r^p-1)}x^{p-1}\right)^{\frac{q}{q-2}}>0.
\end{align*}

It follows from (\ref{7.8}) that $Q_2(r)< 0$. A direct computation yields $Q_2(1)<0$ and
\begin{align*}
   &  Q_2'(x)=\frac{r^q-1}{r^p-1}\cdot p x^{p-1}-\left(\frac{p(r^q-1)}{q(r^p-1)}\right)^{\frac{q}{q-2}}\frac{(p-1)q}{q-2} x^{\frac{(p-1)q}{q-2}-1} >0.
\end{align*}
Thus $P_2'(x)$ is negative for $x\in (1,r)$, from which it follows that $P_2(x)>0$ for all $x\in (1,r)$. This completes the proof of (\ref{7.9}) for $-1<p<0$. 

Note that $-1<p<2$ and $p+3>\frac{2p}{2-p}$ implies $-1<p<\frac{\sqrt{33}-3}{2}$. The proof of (\ref{7.7}) and (\ref{7.9}) for $0\le p<\frac{\sqrt{33}-3}{2}$ is quite similar and so is omitted.
\end{proof}

Applying Lemma \ref{Lem7.4}, we prove the statements for $1<q-p\le 3$.

\begin{lem}\label{Lem7.5}
In the Case (3), we have
\begin{enumerate}
     \item[Case (3)] \quad $1<q-p\le 4$.
    \begin{enumerate}
        \item[Subcase $2.4^\circ$] If $q\ge 2p$, $2q\ge p,\ 1<q-p\le 3$ and $p<\frac{2q}{2+q}\ (q>2)$, then the embedded solution is unique.
        
        \item[Subcase $2.5^\circ$] If $q\ge 2p$, $2q\ge p,\ 1<q-p\le 3$ and $q>\frac{2p}{2-p}\ (p<-2)$, then the embedded solution is unique.
    \end{enumerate}
\end{enumerate}
\end{lem}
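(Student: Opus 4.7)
The plan is to reduce Lemma~\ref{Lem7.5} to Lemma~\ref{Lem7.4}, which was established for the boundary case $q-p=3$, by combining the duality (\ref{4.3}) with the monotonicity of $\Theta$ in $q$ from Theorem~\ref{Thm5.1}. For Subcase~$2.5^\circ$, apply $\Theta(p,q,r) = \Theta(-q,-p,r)$: under $(p,q) \mapsto (p',q') = (-q,-p)$, a direct check shows $p \le -2 \Leftrightarrow q' \ge 2$, $q > \tfrac{2p}{2-p} \Leftrightarrow p' < \tfrac{2q'}{2+q'}$, and $q - p \le 3$, $q \ge 2p$, $2q \ge p$ are all preserved. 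Hence Subcase~$2.5^\circ$ maps bijectively onto Subcase~$2.4^\circ$, and it suffices to treat the latter.

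For Subcase~$2.4^\circ$, since $q \le p + 3$, the monotone decrease of $\Theta$ in $q$ yields $\Theta(p,q,r) \ge \Theta(p, p+3, r)$. To apply Lemma~\ref{Lem7.4} to $(p, p+3)$, one must verify its hypotheses: $-1 < p < 2$ follows from $q > 2$, $p \ge q - 3 > -1$ and $p < \tfrac{2q}{2+q} < 2$; the remaining condition $p + 3 > \tfrac{2p}{2-p}$ is equivalent to $p^2 + 3p - 6 < 0$, i.e.\ $p < \tfrac{\sqrt{33}-3}{2}$, which is guaranteed by the joint constraints $p \ge q - 3$ and $p < \tfrac{2q}{2+q}$ (these force $q < \tfrac{3+\sqrt{33}}{2}$ and hence the desired bound on $p$).

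Lemma~\ref{Lem7.4} then supplies, for every $r > 1$, either $I(p, p+3, r, x) < J_1(r,x)$ on $(1,r)$ or $I(p, p+3, r, x) < J_2(r,x)$ on $(1,r)$. The comparison integrals are elementary: using $J_1(r,x) = (x^2-1)(r^2-x^2)/x^2$ with the substitution $u = x^2$, and $J_2(r,x) = (r-1)^2 - (x-1)^2$ with $x - 1 = (r-1)\sin\phi$, one computes
\[
\int_1^r \frac{dx}{\sqrt{J_1(r,x)}} = \frac{1}{2}\int_1^{r^2} \frac{du}{\sqrt{(u-1)(r^2-u)}} = \frac{\pi}{2}, \qquad \int_1^r \frac{dx}{\sqrt{J_2(r,x)}} = \frac{\pi}{2}.
\]
Therefore $\Theta(p, p+3, r) > \pi/2$, and hence $\Theta(p, q, r) > \pi/2$ for all $r > 1$ in Subcase~$2.4^\circ$.

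A non-constant embedded solution requires $\Theta(p,q,r) = \pi/k$ for some positive integer $k$; the strict bound $\Theta > \pi/2$ eliminates all $k \ge 2$. The residual case $k = 1$ (i.e.\ $\Theta = \pi$) must be excluded separately: for $p \le 1$ the four-vertex theorem forces $k \ge 2$ directly, so the only delicate regime is the narrow strip $1 < p < \tfrac{\sqrt{33}-3}{2}$ inside Subcase~$2.4^\circ$ (and its dual in $2.5^\circ$), where one must show that the critical-point/vertex equation for $u$ is incompatible with the ODE (\ref{2.1}) on the present parameter range. This exclusion of the $k=1$ branch in the strip $p > 1$ is the main obstacle, since the analytic bound $\Theta > \pi/2$ alone does not discriminate $\pi$ from generic values; an algebraic analysis of the extremum condition (or an independent upper bound $\Theta < \pi$) is required to close the argument.
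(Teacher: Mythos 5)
Your reduction of Subcase $2.5^\circ$ to Subcase $2.4^\circ$ via the duality (\ref{4.3}), your verification of the hypotheses of Lemma \ref{Lem7.4} for the pair $(p,p+3)$ (including the observation that $q\le p+3$ and $p<\frac{2q}{2+q}$ force $p<\frac{\sqrt{33}-3}{2}$), and your derivation of the lower bound $\Theta(p,q,r)\ge\Theta(p,p+3,r)>\frac{\pi}{2}$ all match the paper's argument and are correct. The genuine gap is exactly the one you flag in your last paragraph: you never exclude $\Theta(p,q,r)=\pi$, i.e.\ the $k=1$ branch, so uniqueness is not proved. Neither of your suggested escapes works as stated. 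The appeal to the four-vertex theorem for $p\le1$ is unsubstantiated: vertices are critical points of $\kappa=u^{p-1}(u_\theta^2+u^2)^{\frac{2-q}{2}}$, which coincide with critical points of $u$ only after one rules out the degenerate branch $(q-2)u^p(u_\theta^2+u^2)^{-\frac{q}{2}}=p-1$; the paper never develops this line of reasoning, and you give no argument for it. The ``algebraic analysis'' for $1<p<\frac{\sqrt{33}-3}{2}$ is left entirely open.

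The paper closes the $k=1$ case with an upper bound obtained from tools you already invoked. Since $\Theta$ is strictly decreasing in $q$ (Theorem \ref{Thm5.1}), for $-1<p\le1$ one has $\Theta(p,q,r)<\Theta(p,p+1,r)$ because $q>p+1$, and for $1<p<2$ one has $\Theta(p,q,r)<\Theta(p,2p,r)$ because the hypothesis $p<\frac{2q}{2+q}$ with $p>1$ forces $q>\frac{2p}{2-p}>2p$. The comparison pairs are then controlled by Theorem \ref{Thm6.4}: $(p,p+1)$ with $-1<p\le1$ falls in case (iv), so $\Theta(p,p+1,r)$ decreases in $r$ from its limit $\pi$ at $r\to1$ (and $\Theta(1,2,r)\equiv\pi$), while $(p,2p)$ with $1<p<2$ falls in case (ii), so $\Theta(p,2p,r)$ increases in $r$ up to $\frac{q}{q-p}\cdot\frac{\pi}{2}=\pi$. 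Hence $\Theta(p,q,r)<\pi$ throughout Subcase $2.4^\circ$, and combined with your lower bound one gets $\frac{\pi}{2}<\Theta(p,q,r)<\pi$ for all $r>1$, so $\Theta$ never equals $\frac{\pi}{m}$ for any integer $m\ge1$ and only the constant solution remains. Supplying this upper bound is what your proposal is missing.
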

\begin{proof}
Notice that
\begin{align*}
    \int_1^r \frac{dx}{\sqrt{J_i(r,x)}}=\frac{\pi}{2},\quad i=1,2,\quad \forall\ r>1.
\end{align*}

It follows from Lemma \ref{Lem7.4} and Theorem \ref{Thm5.1} that
\begin{align*}
    & \Theta(p,q,r)<\Theta(p,p+1,r)\le\pi, & -1<p\le 1,\\
    & \Theta(p,q,r)<\Theta(p,2p,r)\le\pi, & 1<p<2,
\end{align*}
and
\begin{align*}
    \Theta(p,q,r)\ge\Theta(p,p+3,r)
    =\int_{1}^r \frac{dx}{\sqrt{I(p,p+3,r,x)}}>\frac{\pi}{2},
\end{align*}
for each $r>1$. Thus the only embedded solution is the constant solution.
\end{proof}

$\ $

If $3<q-p\le 4$, we only need to consider the case where $q>2$ and $-2<p<\frac{2q}{2+q}$. Using Theorem \ref{Thm5.1}, we have
\begin{align*}
    & \Theta(p,q,r)<\Theta(p,p+1,r)\le\pi, & -1<p\le 1,\\
    & \Theta(p,q,r)<\Theta(p,2p,r)\le\pi, & 1<p<2,
\end{align*}
and
\begin{align*}
    \Theta(p,q,r)>\Theta(-2,q,r)>\frac{\pi}{\sqrt{q+2}}\ge\frac{\pi}{2\sqrt{2}},
\end{align*}
for each $r>1$. Hence, the non-constant embedded solution of Eq.(\ref{1.2}) must be $\pi$-periodic, equivalently, it satisfies
$$\Theta(p,q,r)=\frac{\pi}{2}.$$

\begin{lem}\label{Lem7.6}
In the Case (3), we have
\begin{enumerate}
     \item[Case (3)] \quad $1<q-p\le 4$.
    \begin{enumerate}
        \item[Subcase $3.3^\circ$] If $q\ge 2p$, $2q\ge p,\ 3<q-p\le 4,\ q>2$ and $p<\frac{2q}{2+q}$, then the non-constant embedded solution must be $\pi$-periodic if it exists.
        
        \item[Subcase $3.5^\circ$] If $q\ge 2p$, $2q\ge p,\ 3<q-p\le 4,\ p<-2$ and $q> \frac{2p}{2-p}$, then the non-constant embedded solution must be $\pi$-periodic if it exists.
    \end{enumerate}
\end{enumerate}
\end{lem}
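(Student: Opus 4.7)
The plan is to formalize the sketch that immediately precedes the statement. A non-constant embedded solution of Eq.(\ref{1.2}) has $k$-fold symmetry for some integer $k\ge 2$, which by the construction of $\Theta$ in Section \ref{sec:2} is equivalent to $\Theta(p,q,r)=\pi/k$ for some $r>1$. A $\pi$-periodic solution corresponds to $k=2$, so it is enough to trap
\begin{equation*}
\pi/(2\sqrt{2})<\Theta(p,q,r)<\pi\qquad\text{for every }r>1
\end{equation*}
throughout the parameter range of the two subcases; since $2<2\sqrt{2}<3$, the only value $\pi/k$ with integer $k\ge 2$ lying in this interval is $\pi/2$, which gives exactly $\pi$-periodicity.

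I would reduce Subcase $3.5^\circ$ to Subcase $3.3^\circ$ using the duality $\Theta(-q,-p,r)=\Theta(p,q,r)$ from Theorem \ref{Thm4.1}: under $(p,q)\mapsto(-q,-p)$ the hypotheses of Subcase $3.5^\circ$ map bijectively to those of Subcase $3.3^\circ$. So it suffices to treat Subcase $3.3^\circ$, where $q>2$, $-2<p<\tfrac{2q}{2+q}$, and $3<q-p\le 4$; these last two conditions combined with $p<-2+2\sqrt 3$ also force $q<2+2\sqrt 3<6$.

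For the upper bound I would use the strict monotonicity of $\Theta$ in $q$ (Theorem \ref{Thm5.1}) by comparing with an auxiliary reference $(p,q_0)$ with $q_0<q$ and $\Theta(p,q_0,\cdot)\le\pi$. For $-2<p\le 1$ take $q_0=p+1$; the pair $(p,p+1)$ sits in Case (iv) of Theorem \ref{Thm6.4}, so $\Theta(p,p+1,\cdot)$ decreases in $r$ from the initial value $\pi/\sqrt{q_0-p}=\pi$. For $1<p<\tfrac{2q}{2+q}$ take $q_0=2p$; the pair $(p,2p)$ sits in Case (ii) of Theorem \ref{Thm6.4}, so $\Theta(p,2p,\cdot)$ increases in $r$ towards the limit $\pi/\Xi(p,2p)=\pi$. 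In either case $\Theta(p,q,r)<\Theta(p,q_0,r)<\pi$. For the lower bound I would pass, via monotonicity in $p$, to $(-2,q)$, which lies in Case (i) of Theorem \ref{Thm6.4}; then $\Theta(-2,q,\cdot)$ increases in $r$ from $\pi/\sqrt{q+2}$, and $q<6$ gives $\Theta(p,q,r)>\Theta(-2,q,r)>\pi/\sqrt{q+2}>\pi/(2\sqrt{2})$, which closes the argument.

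The main obstacle I anticipate is the case-by-case verification that the references $(p,p+1)$ and $(p,2p)$ really belong to the monotone sectors of Theorem \ref{Thm6.4} throughout the admissible range of $p$. This is most delicate for $p\in(-2,-1]$, where $q_0=p+1\in(-1,0]$ and the inclusion in Case (iv) hinges on the elementary inequality $q_0\ge\tfrac{2p}{2-p}$, i.e.\ $(p+2)(p-1)\le 0$; and for $p$ close to $-2+2\sqrt 3$, where one must check that the restriction $p<\tfrac{2q}{2+q}$ still leaves room for $(p,2p)$ in Case (ii). Beyond these algebraic checks the argument is immediate.
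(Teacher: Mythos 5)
Your proposal is correct and follows essentially the same route as the paper: reduce Subcase $3.5^\circ$ to $3.3^\circ$ by the duality \eqref{4.3}, bound $\Theta(p,q,r)$ above by $\Theta(p,p+1,r)\le\pi$ resp.\ $\Theta(p,2p,r)<\pi$ via monotonicity in $q$, bound it below by $\Theta(-2,q,r)>\pi/\sqrt{q+2}>\pi/(2\sqrt2)$ via monotonicity in $p$, and conclude $\Theta=\pi/2$. In fact you treat the range $p\in(-2,-1]$ (where the paper's displayed comparison only states $-1<p\le 1$) explicitly via the check $(p+2)(p-1)\le 0$, which is a welcome completion; the only blemish is the unjustified opening claim that $k\ge 2$, which is harmless since your upper bound $\Theta<\pi$ independently excludes $k=1$.
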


$\ $

\subsection{Case (4): $q-p>4$}$\ $

If $(k-1)^2<q-p\le k^2$ for some integer $k\ge 3$,  we may assume $p>-2, q\ge 2$ and $p<\frac{2q}{2+q}$ because of the duality relation (\ref{4.3}). Note that
\begin{align*}
    \lim\limits_{r\to1}\Theta(p,q,r)=\frac{\pi}{\sqrt{q-p}}\in \left[\frac{\pi}{k},\frac{\pi}{k-1}\right),
\end{align*}

If in addition $0<p<2$, we have
\begin{align*}
    \lim\limits_{r\to\infty}\Theta(p,q,r)=\frac{q}{q-p}\cdot\frac{\pi}{2}\in \left(\frac{\pi}{2},\frac{3\pi}{4}\right).
\end{align*}
Then $\Theta(p,q,r)$ equals $\frac{\pi}{m}$ for some $r>1$ if 
$$\frac{2(q-p)}{q}<m<\sqrt{q-p}.$$
Thus there exist at least $(k-1)$ embedded solutions.

If in addition $-1<p\le 0$, we have
\begin{align*}
    \lim\limits_{r\to\infty}\Theta(p,q,r)=\frac{\pi}{2}.
\end{align*}
It follows from Theorem \ref{Thm3.2} that $\Theta(p,q,r)$ is monotone decreasing in $r$ as $r$ tends to infinity. Then $\Theta(p,q,r)$ equals $\frac{\pi}{m}$ for some $r>1$ if 
$$2\le m<\sqrt{q-p}.$$
Thus there exist at least $(k-1)$ embedded solutions.

If in addition $-2<p\le -1$, we have
\begin{align*}
    \lim\limits_{r\to\infty}\Theta(p,q,r)=\frac{\pi}{2}.
\end{align*}
Then $\Theta(p,q,r)$ equals $\frac{\pi}{m}$ for some $r>1$ if 
$$2< m<\sqrt{q-p}.$$
Thus there exist at least $(k-2)$ embedded solutions.

If in addition $p>\frac{q}{1-q}$ and $k=3$, we make use of the result in \cite{CCL21} for the planar case.
\begin{thm}[\cite{CCL21}]\label{Thm}
Eq.(\ref{1.2}) admits an even, non-constant, smooth, uniformly convex, positive solution $u$ if $p<0<q,\ q-p>4$ and $\frac{1}{p}+1<\frac{1}{q}$.
\end{thm}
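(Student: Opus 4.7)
The plan is to reduce the existence question to an intermediate value argument on the period function $\Theta(p,q,r)$ developed in Sections \ref{sec:2}--\ref{sec:6}. A non-constant even embedded solution of Eq.(\ref{1.2}) corresponds to a $\pi$-periodic embedded solution, which by the identification from Section \ref{sec:7} amounts to finding $r > 1$ such that $\Theta(p,q,r) = \pi/2$. From (\ref{3.3}) we have $\lim_{r\to 1^+}\Theta(p,q,r) = \pi/\sqrt{q-p} < \pi/2$, using $q-p > 4$; from (\ref{3.4}) we have $\lim_{r\to\infty}\Theta(p,q,r) = \pi/2$, using $p<0<q$. It therefore suffices to show that $\Theta(p,q,r_0) > \pi/2$ for some $r_0 \in (1,\infty)$, whereupon continuity and the intermediate value theorem produce the desired $r$.

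For the subrange $-1 < p < 0$ with $q > -p$, the sharper asymptotic (\ref{3.16}) immediately gives the overshoot, since the coefficient $\frac{1}{q}\int_0^1 (y^p - 1)/(1-y^2)^{3/2}\, dy$ is strictly positive, so $\Theta(p,q,r) \to (\pi/2)^+$ as $r \to \infty$. In the complementary range $p \leq -1$, however, the correction $(2-2y)/r$ dominates $(1-y^p)r^p$ in the expansion (\ref{3.12}), so $\Theta(p,q,r) \to (\pi/2)^-$ and no immediate overshoot is available at infinity. Here one must either push the expansion of $\tilde{I}(p,q,1,r,z)$ to higher order and use $\frac{1}{p}+1 < \frac{1}{q}$ to pin down the sign of the next non-trivial correction, or argue globally that $\Theta$ cannot monotonically approach $\pi/2$ from below under the stated hypothesis.

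An alternative (and likely the one pursued in \cite{CCL21}) is a variational construction: Eq.(\ref{1.2}) is the Euler--Lagrange equation of $\mathcal{F}(u) = \frac{1}{q}\int_{\mathbb{S}^1}(u_\theta^2 + u^2)^{q/2}\, d\theta$ subject to the constraint $\frac{1}{p}\int_{\mathbb{S}^1} u^p\, d\theta = c$, restricted to the class of positive even $2\pi$-periodic support functions. A second variation computation at $u \equiv 1$ in the direction $v = \cos(2\theta)$ yields eigenvalue $q-p-4 > 0$, so the constant solution is destabilized in the even class precisely when $q-p > 4$. The condition $\frac{1}{p} + 1 < \frac{1}{q}$ should be the precise analytic threshold guaranteeing coercivity of the functional and ruling out degeneration of minimizing sequences (in particular, preventing $u$ from touching zero along the sphere), after which the direct method produces a non-constant smooth critical point, with uniform convexity following from standard regularity bootstrapping.

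The main obstacle in either route is the regime $p \leq -1$: in the $\Theta$-approach one loses the immediate asymptotic overshoot and must control intermediate values of $r$; in the variational approach one must carefully rule out loss of compactness. The hypothesis $\frac{1}{p}+1<\frac{1}{q}$ is the sharp condition that makes this control possible, and extracting it quantitatively is the core analytic difficulty.
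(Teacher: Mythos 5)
This statement is not proved in the paper at all: it is imported verbatim from \cite{CCL21} (whose argument is variational, as the title of that reference suggests), precisely because the authors' own period-function machinery does not reach it. Indeed, in Lemma \ref{Lem7.7} Subcase $2^\circ$ the paper only gets $(k-2)$ solutions for $-2<p\le -1$ from the crossing $\Theta=\pi/m$ with $2<m<\sqrt{q-p}$, and then invokes this cited theorem to add the $m=2$ solution in Subcase $2.1^\circ$. So there is no in-paper proof to match your proposal against; the question is whether your proposal stands on its own. It does not.

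Your first route is correct and complete only on the subrange $-1<p<0$ (where it reproduces the paper's own reasoning for Subcase $3^\circ$: the coefficient in (\ref{3.16}) is positive, $\Theta\to(\pi/2)^+$, and the intermediate value theorem applies against $\lim_{r\to1}\Theta=\pi/\sqrt{q-p}<\pi/2$). But on that subrange the hypothesis $\frac1p+1<\frac1q$ is automatically satisfied and plays no role. The regime where the hypothesis actually bites is $p<-1$ (there it is equivalent to $q<\frac{p}{p+1}$), and this is exactly where you concede that $\Theta\to(\pi/2)^-$ and that no overshoot is available from the asymptotics; you name the difficulty but do not resolve it, so the intermediate-value argument never produces the required $r_0$ with $\Theta(p,q,r_0)\ge\pi/2$. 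Your fallback variational sketch correctly identifies the mechanism of \cite{CCL21} and the linearized eigenvalue $q-p-4$ at the constant solution is right, but the key assertions --- that $\frac1p+1<\frac1q$ ``should be the precise analytic threshold guaranteeing coercivity'' and that the direct method then goes through --- are conjectures, not arguments: the functional is written down but no coercivity, no compactness of minimizing sequences, and no exclusion of the constant as the minimizer are established. In short, the hypothesis $\frac1p+1<\frac1q$ never enters your argument in a verifiable way, and the case it governs is exactly the case you leave open; this is a genuine gap, not a detail.
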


In summary, we have proved the following results.
\begin{lem}\label{Lem7.7}
In the Case (4), we have
\begin{enumerate}
    \item[Case ($4$)] \quad $(k-1)^2<q-p\le k^2,\quad k\ge3$.
    \begin{enumerate}
         \item[Subcase $2^\circ$] If $q\ge 2,\ p<\frac{2q}{2+q}$ and $-2<p\le -1$, then there exist at least $(k-2)$ embedded solutions.
        
        \item[Subcase $2.1^\circ$] If in addition $p>\frac{q}{1-q}$ and $k=3$, then there exist at least two embedded solutions.
        
        \item[Subcase $3^\circ$] If $q\ge 2,\ p<\frac{2q}{2+q}$ and $p>-1$, then there exist at least $(k-1)$ embedded solutions.
        
        \item[Subcase $6^\circ$] If $p\le -2,\ q>\frac{2p}{2-p}$ and $1\le q<2$, then there exist at least $(k-2)$ embedded solutions.
        
        \item[Subcase $6.1^\circ$] If in addition $q<\frac{p}{1+p}$ and $k=3$, then there exist at least two embedded solutions.
        
        \item[Subcase $7^\circ$] If $p\le -2,\ q>\frac{2p}{2-p}$ and $q<1$, then there exist at least $(k-1)$ embedded solutions.

    \end{enumerate}
\end{enumerate}
\end{lem}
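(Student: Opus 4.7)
The plan is to apply the intermediate value theorem to the period function $\Theta(p,q,r)$ on $(1,\infty)$, exactly as in the monotone case of Lemma 7.1 but without the aid of monotonicity. The key input is the boundary behaviour already recorded in Theorem 3.1 and Corollary 4.3: $\Theta \to \pi/\sqrt{q-p}$ as $r \to 1$, and $\Theta \to \pi/\Xi(p,q)$ as $r \to \infty$. Because a non-constant embedded solution of Eq.(1.2) with $m$-fold symmetry corresponds to some $r > 1$ with $\Theta(p,q,r) = \pi/m$ for an integer $m \ge 2$, and because $\Theta$ is continuous on $(1,\infty)$, the image of $\Theta$ is an interval that contains every $\pi/m$ lying strictly between $\pi/\sqrt{q-p}$ and $\pi/\Xi(p,q)$. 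Under the hypothesis $(k-1)^2 < q-p \le k^2$ we have $k-1 < \sqrt{q-p} \le k$, which fixes the admissible range of $m$ up to whether the endpoint $\pi/\Xi(p,q)$ is crossed or only approached.

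For Subcase $3^\circ$ ($-1 < p < \tfrac{2q}{2+q}$, $q \ge 2$), I would verify that $\Theta(p,q,r) > \pi/2$ for all sufficiently large $r$. When $0 < p < \tfrac{2q}{2+q}$ this is automatic because $\pi/\Xi(p,q) = \tfrac{q}{q-p}\cdot\tfrac{\pi}{2} > \tfrac{\pi}{2}$. When $-1 < p \le 0$ one has $\pi/\Xi(p,q) = \pi/2$ only asymptotically, but the refined expansions (3.15) and (3.16) of Theorem 3.2 show that the leading correction to $\pi/2$ is strictly positive: the coefficient $\int_0^1 (y^p-1)(1-y^2)^{-3/2}\,dy$ is positive since $y^p > 1$ on $(0,1)$ when $p < 0$. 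Hence $\Theta > \pi/2$ for large $r$, and the intermediate value theorem yields an $r$ with $\Theta(p,q,r) = \pi/m$ for every integer $m$ with $2 \le m < \sqrt{q-p}$, producing $(k-2)$ non-constant embedded solutions and $(k-1)$ in total.

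For Subcase $2^\circ$ ($-2 < p \le -1$, $q \ge 2$, $p < \tfrac{2q}{2+q}$) the endpoint value $\pi/2$ is only approached and cannot be guaranteed to be attained for finite $r$; no analogue of the asymptotic (3.16) is available because the relevant integral diverges at $p = -1$. The same intermediate-value argument now produces solutions only for $m$ ranging over $\{3, 4, \dots, k-1\}$, giving $(k-3)$ non-constant plus one constant, hence $(k-2)$ in total. In Subcase $2.1^\circ$ (where $k = 3$) this count degenerates to just the constant solution, and I would invoke the existence theorem of Chen--Chen--Li quoted in the excerpt, first verifying that $p > q/(1-q)$ is equivalent to $1 + 1/p < 1/q$ (which uses $-2 < p < -1$, so that $1+p < 0$ and the implication flips sign); that theorem then supplies a second, even, non-constant embedded solution.

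The remaining Subcases $6^\circ$, $6.1^\circ$, $7^\circ$ all lie in $p \le -2$ and I would deduce them from Subcases $2^\circ$, $2.1^\circ$, $3^\circ$ via the duality $\Theta(p,q,r) = \Theta(-q,-p,r)$ of Theorem 4.1, equation (4.3). One checks that the involution $(p,q)\mapsto(-q,-p)$ interchanges $\{q > \tfrac{2p}{2-p}\}$ with $\{p' < \tfrac{2q'}{2+q'}\}$ and $\{q < \tfrac{p}{1+p}\}$ with $\{p' > \tfrac{q'}{1-q'}\}$, so all hypotheses transfer and the solution counts are preserved. The main technical obstacle is the borderline integer $m = 2$ in Subcase $3^\circ$: whether $\Theta$ genuinely crosses $\pi/2$, and thereby whether the count is $(k-1)$ rather than $(k-2)$, turns entirely on the positivity of the leading-order coefficient in (3.16) (resp.\ (3.15) at $p=0$), which is the only non-cosmetic ingredient of the argument. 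Everything else is a bookkeeping exercise comparing $\pi/\sqrt{q-p}$, $\pi/\Xi(p,q)$, and the candidate fractions $\pi/m$.
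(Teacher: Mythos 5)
Your proposal is correct and follows essentially the same route as the paper: intermediate value theorem on the continuous period function between its limits $\pi/\sqrt{q-p}$ and $\pi/\Xi(p,q)$, the refined asymptotics (3.15)--(3.16) to secure the borderline value $m=2$ when $-1<p\le 0$ (via positivity of $\int_0^1(y^p-1)(1-y^2)^{-3/2}\,dy$), the Chen--Chen--Li existence theorem for Subcases $2.1^\circ$/$6.1^\circ$ after checking that $p>\frac{q}{1-q}$ is equivalent to $1+\frac1p<\frac1q$, and the duality (4.3) to transfer everything to $p\le-2$. The solution counts in each subcase match the paper's.
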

Notice that $q<2$ and $p>-2$ does not happen in Case (4) because of $q-p>(k-1)^2\ge4$. Thus Subcase $1^\circ$-Subcase $9^\circ$ complete Case (4) in Theorem \ref{Thm1.2}.

$\ $

Theorem \ref{Thm1.2} follows immediately from the above lemmas. Therefore, we complete the proof of Theorem \ref{Thm1.2}.

\begin{proof}[Proof of Theorem \ref{Thm1.3}]
By the similar observation of Ben Andrews \cite{Ben03}, if there exists a non-constant solution $u(\theta)$ with total curvature $2\pi n$ and $m$ maxima of $u$, then its total curvature of the portion between consecutive critical points must be $\frac{\pi n}{m}$, which equals $\Theta(p,q,r)$ for some $r>1$. Applying Theorem \ref{Thm3.1}, Corollary \ref{Cor4.3} and Theorem \ref{Thm6.4}, the argument proceeds as the proof of Theorem \ref{Thm1.2}. We complete the proof of Theorem \ref{Thm1.3}.
\end{proof}

\appendix
\section{Proof of (\ref{3.10})}\label{appendix}

In the Appendix \ref{appendix} we give the proof of (\ref{3.10}), i.e.
\begin{align}\label{a.1}
    \tilde{I}(p,q,\beta,r,z)
    =(q-p)(1-z^2)\left[1+\frac{z}{2}\left(\beta-\frac{2q-p}{3}\right)(r-1)+o(r-1)\right],
\end{align}
where $\tilde{I}(p,q,\beta,r,z)$ is given by
\begin{equation*}
\begin{split}
    & \tilde{I}(p,q,\beta,r,z)=\frac{4\beta^2}{(r^\beta-1)^2}\left[\left(\frac{r^p-r^q}{r^p-1}v^{\frac{q(\beta-1)}{\beta}}+\frac{r^q-1}{r^p-1}v^{\frac{p+q(\beta-1)}{\beta}}\right)^{\frac{2}{q}}-v^2\right],\quad p\neq0,\\
    & \tilde{I}(0,q,\beta,r,z)=\frac{4\beta^2}{(r^\beta-1)^2}\left[v^{\frac{2(\beta-1)}{\beta}}\left(1+\frac{r^q-1}{\beta\log r}\log v\right)^{\frac{2}{q}}-v^2\right].
\end{split}
\end{equation*}
and 
\begin{align*}
    v=\frac{r^\beta-1}{2}z+\frac{r^\beta+1}{2},\quad -1<z<1.
\end{align*}
Since $\tilde{I}(p,q,\beta,r,z)$ is continuous in $p$, we may assume $p\neq0$ in the proof of (\ref{3.10}).

Let $\delta=r-1$ and $y=\frac{1+z}{2}\in (0,1)$, we have $v=1+(r^\beta-1)y$. When $r$ is close to $1$, a direct computation shows
\begin{align*}
    & \frac{4\beta^2}{(r^\beta-1)^2}=\frac{4}{\delta^2}\left[1-(\beta-1)\delta+o(\delta)\right],\\
    & \frac{r^q-1}{r^p-1}=\frac{q}{p}\left[1+\frac{q-p}{2}\delta+\frac{(p-q)(p-2q+3)}{12}\delta^2+o(\delta^2)\right],
\end{align*}
and
\begin{align*}
    v^{\alpha}
    &=1+\alpha\beta y\delta+\left(\frac{\alpha\beta(\beta-1)}{2}y+\frac{\alpha(\alpha-1)\beta^2}{2}y^2\right)\delta^2\\
    &\quad +\left(\frac{\alpha\beta(\beta-1)(\beta-2)}{6}y+\frac{\alpha(\alpha-1)\beta^2(\beta-1)}{2}y^2+\frac{\alpha(\alpha-1)(\alpha-2)\beta^3}{6}y^3\right)\delta^3+o(\delta^3),
\end{align*}
for any exponent $\alpha$.

First we have 
\begin{align}\label{a.2}
    \frac{r^p-r^q}{r^p-1}v^{\frac{q(\beta-1)}{\beta}}+\frac{r^q-1}{r^p-1}v^{\frac{p+q(\beta-1)}{\beta}}
    =1+q\beta y\delta+E\delta^2+F\delta^3+o(\delta^3),
\end{align}
as $\delta$ tends to $0$, where
\begin{align*}
    E=\frac{qy}{2}\left(q-p+\beta(\beta-1)\right)+\frac{qy^2}{2}\left(p-q+\beta^2(q-1)\right),
\end{align*}
and
\begin{align*}
    F &=qy\left(\frac{\beta(\beta-1)(\beta-2)}{6}+\frac{(q-p)(3\beta-6-p+2q)}{12}\right)\\
    &\quad +qy^2\left(\frac{(\beta-1)^2(q(\beta-1)-\beta)}{2}+\frac{(p+2q(\beta-1)-\beta)(q-p+2\beta-2)}{4}\right)\\
    &\quad +y^3\left(\frac{(q-p)(\beta-1)(q(\beta-1)-\beta)(q(\beta-1)-2\beta)}{6}\right.\\
    &\quad \left.+\frac{q(p+q(\beta-1))(p+q(\beta-1)-\beta)(p+q(\beta-1)-2\beta)}{6p}\right).
\end{align*}

Next, it follows from (\ref{a.2}) that
\begin{align*}
    \uppercase\expandafter{\romannumeral2}
    &:=\left(\frac{r^p-r^q}{r^p-1}v^{\frac{q(\beta-1)}{\beta}}+\frac{r^q-1}{r^p-1}v^{\frac{p+q(\beta-1)}{\beta}}\right)^{\frac{2}{q}}-v^2\\
    &=\left(\frac{2}{q}E-\beta(\beta-1)y+(1-q)\beta^2 y^2\right)\delta^2
    +\left(\frac{2}{q}F+\frac{2(2-q)}{q}\beta y E-\frac{\beta(\beta-1)(\beta-2)}{3}y\right.\\
    &\quad \left.-(\beta-1)\beta^2 y^2+\frac{(2-q)(2-2q)}{3}\beta^3 y^3\right)\delta^3+o(\delta^3)\\
    &=G\delta^2+H\delta^3+o(\delta^3)
\end{align*}
as $\delta$ tends to $0$, where $G=(q-p)y(1-y)$, and
\begin{align*}
    H &=y\frac{(q-p)(3\beta-6-p+2q)}{6}+y^2\frac{(q-p)(p-2q+\beta+2)}{2}\\
    &\quad +y^3\frac{(q-p)(2q-p-3\beta)}{3}.
\end{align*}

Finally, we obtain
\begin{align*}
    \tilde{I} &=\frac{4\beta^2}{(r^\beta-1)^2}\cdot\uppercase\expandafter{\romannumeral2}
    =4\left(1-(\beta-1)\delta+o(\delta)\right)\left(G+H\delta+o(\delta)\right)\\
    &=4\left(G+(H-(\beta-1)G)\delta+o(\delta)\right)\\
    &=4(q-p)y(1-y)+2(q-p)\left(\beta-\frac{2q-p}{3}\right)y(1-y)(2y-1)\delta+o(\delta)\\
    &=(q-p)(1-z^2)\left[1+\frac{z}{2}\left(\beta-\frac{2q-p}{3}\right)\delta+o(\delta)\right],
\end{align*}
as $\delta$ tends to $0$. This completes the proof of (\ref{3.10}).

$\ $

$\ $

\begin{bibdiv}
\begin{biblist}

\bib{Ben03}{article}{
	author={Andrews, Ben},
	title={Classification of limiting shapes for isotropic curve flows},
	journal={J. Amer. Math. Soc.},
	volume={16},
	number={2},
    pages={443--459},
    year={2003}
}  

\bib{BF19}{article}{
	author={B\"{o}r\"{o}czky, K\'{a}roly J.},
	author={Fodor, Ferenc},
	title={The {$L_p$} dual {M}inkowski problem for {$p>1$} and
              {$q>0$}},
	journal={J. Differential Equations},
	volume={266},
	number={12},
    pages={7980--8033},
    year={2019}
}

\bib{BHP18}{article}{
	author={B\"{o}r\"{o}czky, K\'{a}roly J.},
	author={Henk, Martin},
	author={Pollehn, Hannes},
	title={Subspace concentration of dual curvature measures of symmetric convex bodies},
	journal={J. Differential Geom.},
	volume={109},
	number={3},
    pages={411--429},
    year={2018}
}

\bib{BLY13}{article}{
	author={B\"{o}r\"{o}czky, K\'{a}roly J.},
	author={Lutwak, Erwin},
	author={Yang, Deane},
	author={Zhang, Gaoyong},
	title={The logarithmic {M}inkowski problem},
	journal={J. Amer. Math. Soc.},
	volume={26},
	number={3},
    pages={831--852},
    year={2013}
}

\bib{BLY19}{article}{
	author={B\"{o}r\"{o}czky, K\'{a}roly J.},
	author={Lutwak, Erwin},
	author={Yang, Deane},
	author={Zhang, Gaoyong},
	author={Zhao, Yiming},
	title={The dual {M}inkowski problem for symmetric convex bodies},
	journal={Adv. Math.},
	volume={356},
    pages={106805, 30 pp},
    year={2019}
}

\bib{BT17}{article}{
	author={B\"{o}r\"{o}czky, K\'{a}roly J.},
	author={Trinh, Hai T.},
	title={The planar {$L_p$}-{M}inkowski problem for {$0<p<1$}},
	journal={Adv. in Appl. Math.},
	volume={87},
    pages={58--81},
    year={2017}
}

\bib{CHZ19}{article}{
	author={Chen, Chuanqiang},
	author={Huang, Yong},
	author={Zhao, Yiming},
	title={Smooth solutions to the {$L_p$} dual {M}inkowski problem},
	journal={Math. Ann.},
	volume={373},
	number={3-4},
	pages={953--976},
	year={2019}
}

\bib{CCL21}{article}{
	author={Chen, Haodi},
	author={Chen, Shibing},
	author={Li, Qi-Rui},
	title={Variations of a class of {M}onge-{A}mp\`ere-type functionals and their applications},
	journal={Anal. PDE},
	volume={14},
	number={3},
    pages={689--716},
    year={2021}
}

\bib{CL21}{article}{
	author={Chen, Haodi},
	author={Li, Qi-Rui},
	title={The {$L_ p$} dual {M}inkowski problem and related parabolic
		flows},
	journal={J. Funct. Anal.},
	volume={281},
	number={8},
	pages={Paper No. 109139, 65},
	year={2021}
}

\bib{CL18}{article}{
	author={Chen, Shibing},
	author={Li, Qi-Rui},
	title={On the planar dual {M}inkowski problem},
	journal={Adv. Math.},
	volume={333},
	pages={87--117},
	year={2018}
}

\bib{CY76}{article}{
	author={Cheng, Shiu Yuen},
	author={Yau, Shing Tung},
	title={On the regularity of the solution of the {$n$}-dimensional
		{M}inkowski problem},
	journal={Comm. Pure Appl. Math.},
	number={5}
	volume={29},
	pages={495--516},
	year={1976}
}

\bib{CW06}{article}{
	author={Chou, Kai-Seng},
	author={Wang, Xu-Jia},
	title={The {$L_p$}-{M}inkowski problem and the {M}inkowski problem in centroaffine geometry},
	journal={Adv. Math.},
	volume={205},
    pages={33--83},
    year={2006}
}

\bib{HP18}{article}{
	author={Henk, Martin},
	author={Pollehn, Hannes},
	title={Necessary subspace concentration conditions for the even dual
		{M}inkowski problem},
	journal={Adv. Math.},
	volume={323},
	pages={114--141},
	year={2018}
} 

\bib{HJ19}{article}{
	author={Huang, Yong},
	author={Jiang, Yongsheng},
	title={Variational characterization for the planar dual {M}inkowski
              problem},
	journal={J. Funct. Anal.},
	volume={277},
	number={7},
    pages={2209--2236},
    year={2019}
} 

\bib{HLY16}{article}{
	author={Huang, Yong},
	author={Lutwak, Erwin},
	author={Yang, Deane},
	author={Zhang, Gaoyong},
	title={Geometric measures in the dual {B}runn-{M}inkowski theory and their associated {M}inkowski problems},
	journal={Acta Math.},
	volume={216},
	number={2},
    pages={325--388},
    year={2016}
} 

\bib{HZ18}{article}{
	author={Huang, Yong},
	author={Zhao, Yiming},
	title={On the {$L_p$} dual {M}inkowski problem},
	journal={Adv. Math.},
	volume={332},
    pages={57--84},
    year={2018}
} 

\bib{HLY05}{article}{
	author={Hug, Daniel},
	author={Lutwak, Erwin},
	author={Yang, Deane},
	author={Zhang, Gaoyong},
	title={On the {$L_p$} {M}inkowski problem for polytopes},
	journal={Discrete Comput. Geom.},
	volume={33},
	number={4},
	pages={699--715},
	year={2005}
} 

\bib{JLZ16}{article}{
	author={Jian, Huaiyu},
	author={Lu, Jian},
	author={Zhu, Guangxian},
	title={Mirror symmetric solutions to the centro-affine {M}inkowski
    problem},
	journal={Calc. Var. Partial Differential Equations},
	volume={55},
	number={2},
    pages={Art. 41, 22},
    year={2016}
}  

\bib{JWW21}{article}{
	author={Jiang, Yongsheng},
	author={Wang, Zhengping},
	author={Wu, Yonghong},
	title={Multiple solutions of the planar {$L_p$} dual {M}inkowski problem},
	journal={Calc. Var. Partial Differential Equations},
	volume={60},
	number={3},
    pages={Paper No. 89, 16},
    year={2021}
}  

\bib{LLL22}{article}{
	author={Li, Qi-Rui},
	author={Liu, Jiakun},
	author={Lu, Jian},
	title={Nonuniqueness of solutions to the {$L_p$} dual {M}inkowski problem},
	journal={Int. Math. Res. Not. IMRN},
	number={12},
    pages={9114--9150},
    year={2022}
}  

\bib{LL22}{article}{
	author={Liu, Yannan},
	author={Lu, Jian},
	title={On the number of solutions to the plannar dual Minkowski problem},
	journal={preprint},
	year={2022}
}  

\bib{LW13}{article}{
	author={Lu, Jian},
	author={Wang, Xu-Jia},
	title={Rotationally symmetric solutions to the {$L_p$}-{M}inkowski
		problem},
	journal={J. Differential Equations},
	number={3},
	volume={254},
	pages={983--1005},
	year={2013}
}  

\bib{Lut7501}{article}{
	author={Lutwak, Erwin},
	title={Dual mixed volumes},
	journal={Pacific J. Math.},
	number={2},
	volume={58},
    pages={531--538},
    year={1975}
}  

\bib{Lut7502}{article}{
	author={Lutwak, Erwin},
	title={Dual cross-sectional measures},
	journal={Atti Accad. Naz. Lincei Rend. Cl. Sci. Fis. Mat. Nat. (8)},
	number={1},
	volume={58},
    pages={1--5},
    year={1975}
}  

\bib{Lut93}{article}{
	author={Lutwak, Erwin},
	title={The {B}runn-{M}inkowski-{F}irey theory. {I}. {M}ixed volumes
              and the {M}inkowski problem},
	journal={J. Differential Geom.},
	volume={38},
    pages={131--150},
    year={1993}
}  

\bib{Lut96}{article}{
	author={Lutwak, Erwin},
	title={The {B}runn-{M}inkowski-{F}irey theory. {II}. {A}ffine and
              geominimal surface areas},
	journal={Adv. Math.},
	number={2},
	volume={118},
    pages={244--294},
    year={1996}
}  

\bib{LYZ04}{article}{
	author={Lutwak, Erwin},
	author={Yang, Deane},
	author={Zhang, Gaoyong},
	title={On the {$L_p$}-{M}inkowski problem},
	journal={Trans. Amer. Math. Soc.},
	number={11},
	volume={356},
    pages={4359--4370},
    year={2004}
}  

\bib{LYZ18}{article}{
	author={Lutwak, Erwin},
	author={Yang, Deane},
	author={Zhang, Gaoyong},
	title={{$L_p$} dual curvature measures},
	journal={Adv. Math.},
	volume={329},
    pages={85--132},
    year={2018}
}  

\bib{Sch14}{book}{
	author={Schneider, Rolf},
	title={Convex bodies: the {B}runn-{M}inkowski theory},
	series={Encyclopedia of Mathematics and its Applications},
	publisher= {Cambridge University Press, Cambridge},
	volume={151},
    pages={xxii+736},
    year={2014}
}  

\bib{SX21}{article}{
	author={Sheng, Weimin},
	author={Xia, Shucan},
	title={The planar {$L_{p}$} dual {M}inkowski problem},
	journal={Sci. China Math.},
	number={7}
	volume={64},
    pages={1637--1648},
    year={2021}
}  

\bib{Zhao17}{article}{
	author={Zhao, Yiming},
	title={The dual {M}inkowski problem for negative indices},
	journal={Calc. Var. Partial Differential Equations},
	number={2},
	volume={56},
    pages={Paper No. 18, 16},
    year={2017}
}  

\bib{Zhao18}{article}{
	author={Zhao, Yiming},
	title={Existence of solutions to the even dual {M}inkowski problem},
	journal={J. Differential Geom.},
	number={3},
	volume={110},
    pages={543--572},
    year={2018}
}  

\end{biblist}
\end{bibdiv}

\end{document}